 \def\sqr#1#2{{\vcenter{\vbox{\hrule
        height.#2pt \hbox{\vrule width.#2pt height#1pt \kern#2pt
          \vrule width.#2pt} \hrule height.#2pt}}}} 
\newtheorem{theorem}{Theorem}[section]
\newtheorem{lemma}{Lemma}[section]
\newtheorem{example}{Example}[section]
\newtheorem{corollary}{Corollary}[section]
\newtheorem{proposition}{Proposition}[section]
\newtheorem{remark}{Remark}
\newtheorem{definition}{Definition}[section]
\newtheorem{assumption}{Assumption}
\newenvironment{proof}{{\sc Proof.}\hspace{3mm}}{\qed \\}
\newenvironment{pf-main}{{\sc Proof of Theorem \ref{mainresult}.}\hspace{3mm}}{\qed}
\newcommand{\nc}{\newcommand}
\nc{\cadlag}{c\`{a}dl\`{a}g } \nc{\caglad}{c\`{a}gl\`{a}d }
\nc{\ba}{\begin{array}} \nc{\ea}{\end{array}}
\nc{\be}{\begin{equation}} \nc{\ee}{\end{equation}}
\nc{\bea}{\begin{eqnarray}} \nc{\eea}{\end{eqnarray}}
\nc{\bean}{\begin{eqnarray*}} \nc{\eean}{\end{eqnarray*}}
\nc{\bu}{\bullet} \nc{\nn}{\nonumber} \nc{\cA}{{\mathcal A}}
\nc{\cB}{{\mathcal B}} \nc{\cC}{{\mathcal C}} \nc{\cD}{{\mathcal
D}} \nc{\cL}{{\mathcal L}} \nc{\cN}{{\mathcal
N}}\nc{\bbD}{\mathbb{D}} \nc{\cG}{{\mathcal G}} \nc{\cI}{{\mathcal I}}\nc{\cF}{{\mathcal
F}} \nc{\cS}{{\mathcal S}} \nc{\cO}{{\mathcal O}}\nc{\cR}{{\mathcal
R}}\nc{\cU}{{\mathcal U}} \nc{\cH}{{\mathcal H}}
\nc{\cK}{{\mathcal K}} \nc{\cM}{{\mathcal M}} \nc{\cP}{{\mathcal
P}} \nc{\bbE}{\mathbb{E}} \nc{\bbEQ}{\mathbb{E}^{\mathbb{Q}}}
\nc{\eps}{\varepsilon}\nc{\bbU}{\mathbb{U}}
\nc{\bbEP}{\mathbb{E}_{\mathbb{P}}}\nc{\bbL}{\mathbb{L}}
\nc{\bbP}{\mathbb{P}} \nc{\bbQ}{\mathbb{Q}} \nc{\Om}{\Omega}
\nc{\om}{\omega} \nc{\bbR}{\mathbb{R}} \nc{\bbC}{\mathbb{C}}
\nc{\bfr}{\begin{flushright}} \nc{\efr}{\end{flushright}}
\nc{\dXt}{\Delta X_{t}} \nc{\dXs}{\Delta X_{s}}
\nc{\bs}{\blacksquare} \nc{\dX}{\Delta X} \nc{\dY}{\Delta Y}
\nc{\dnkx}{\left(X(T^{n}_{k})-X(T^{n}_{k-1})\right)}
\nc{\dom}{depth-of-the-market } \nc{\uar}{\uparrow}
\nc{\dar}{\downarrow}\nc{\rar}{\rightarrow}
\nc{\half}{\frac{1}{2}}
 \nc{\hbE}{\hat{\bbE}}
\nc{\what}{\widehat} \nc{\fhat}{\what{f}} \nc{\qed}{\hfill
$\blacksquare$} \nc {\parx}{\frac{\partial}{\partial x}} \nc
\def\rar{\rightarrow}
\def\dar{\downarrow}
\nc{\chf}{\mbox{$\mathbf1$}}
\numberwithin{equation}{section}
\begin{document}

\title{On absolutely continuous compensators and nonlinear filtering equations in default risk models}
        \author{Umut \c{C}etin \\ London School of Economic and Political Science\\
        Department of Statistics\\ Columbia House\\ Houghton Street\\ London WC2A 2AE\\
{\tt u.cetin@lse.ac.uk}}
\date{\today}
 \maketitle
\begin{abstract} We discuss the pricing of defaultable assets in an
  incomplete information model where the default time is given by a
  first hitting time of an unobservable process. We show that in a
  fairly general Markov setting, the indicator function of the default
  has an absolutely continuous compensator. Given this compensator we
  then discuss the optional projection of a class of semimartingales
  onto the filtration generated by the observation process and the
  default indicator process. Available formulas for the pricing of
  defaultable assets are analyzed in this setting and some alternative
  formulas are suggested. \\
{\bf Key words:} Az\'ema supermartingale, default indicator,
absolutely continuous compensators, pricing of default risk, nonlinear
filtering, Zakai equation, Kushner-Stratonovich equation.
\end{abstract}
\section{Introduction}
The motivation of this paper comes from a special field of Finance Theory; namely,
the valuation of credit derivatives. The key problem in this field
is to determine the price of an asset subject to default. To make the
discussion more concrete, let's consider the basic
financial instrument with default risk, which is a corporate bond with
maturity $T$ that pays the owner $F$ units of a currency if the firm
does not default until time $T$. If firm defaults before time $T$,
usually there is a nonzero rebate, $R$, paid to the bond holder. Given
this basic structure, the price of the defaultable bond at time $t$ is given
by the conditional expectation
\[
\bbE[R \chf_{[\tau \leq T]}+F \chf_{[\tau > T]} |\cG_t]
\]
where $\cG$ is the {\em market's filtration} and the expectation is taken
with respect to the martingale measure chosen by the market. Default
time, $\tau$, associated to the  firm issuing the
defaultable bond is often  modeled as the first hitting time of
barrier by a stochastic process representing the firm value. Leland \cite{L} shows  under certain conditions that it is optimal for
the equity owners to liquidate the firm, and thus declare default,
when the firm value falls below a barrier.
On the other hand, the market is not able to identify the firm value
continuously in time but has only a noisy observation of it. However,
it is reasonable to assume that whether the default has occurred is directly
observed in the market. In the simplest setting the process $Y$ that
satisfies
\[
Y_t=B_t +\int_0^t b(X_s)\,ds,
\]
can be viewed as the noisy observation of the firm value with $B$
being the noise, independent of the firm value, and
$X$ is the firm's value process. Various aspects of this incomplete information issue
have been studied in the literature. We can mention Jarrow \&
Turnbull \cite{jt95}, Lando \cite{l98},
Duffie \& Singleton \cite{ds99}, Kusuoka \cite{kus}, Duffie \& Lando
\cite{dl01}, Nakagawa \cite{nak}, Bielecki \&
Rutkowski \cite{br}, \c{C}etin, et al. \cite{cjpy},  Jarrow \&
Protter \cite{jp04}, Jarrow, et al. \cite{jps}, Coculescu, et al.~\cite{coc}, and Campi \&
\c{C}etin \cite{cc}  to name a few. Frey \& Runggaldier \cite{fr08},
Frey \& Schmidt \cite{fs09a}  and
Frey \& Schmidt \cite{fs09} model credit risk from a nonlinear filtering point of view.

Valuation formulas for defaultable assets are given in different
contexts in the literature. Duffie et al.~\cite{dss96} have given a
formula that computes the price in the form of a stochastic
discounting. In case of zero-coupon defaultable bond, i.e. $R=0$ and
$F=1$ in above formulation,  the time $t$  price of this bond on the event $[\tau>t]$ is
given by
\be \label{e:dss}
J_t -\bbE\left[\chf_{[t<\tau\leq T]}\Delta
  J_{\tau}|\cG_t\right], \qquad t \in [0,T],
\ee
where
\[
J_t=\bbE\left[\exp\left(-\int_t^{T\wedge \tau}\lambda_s\,ds\right)\bigg|\cG_t\right],
\]
and $\lambda$ is the so called {\em default intensity} which appears in the
$\cG$-canonical decomposition of the supermartingale $(\chf_{[\tau>t]})_{t \geq 0}$. More precisely, $(\lambda_{t \wedge \tau})_{t \geq 0}$ is what makes
\be \label{e:acc}
\left(\chf_{[\tau>t]}+\int_0^{\tau \wedge t}\lambda_s\,ds\right)_{t
  \geq 0}
\ee
 a $\cG$-martingale and $(\int_0^{\tau \wedge t}\lambda_s\,ds)_{t \geq 0}$ is said to be the {\em compensator} of $(\chf_{[\tau>t]})_{t \geq 0}$. It is important to note here that such $\lambda$ may not exist for any given random time $\tau$. Although the formula in (\ref{e:dss}) is
appealing in the sense that  the price is  a discounted expected value
where the discounting factor is given by the default intensity, its
drawback lies in the difficulty of computing the second term in (\ref{e:dss}) even if one is content with the assumption for the existence of an absolutely continuous compensator. In general it is not possible to compute
the conditional expectation of the jump term appearing in the formula
(see  \c{C}etin, et al.\cite{cjpy}  for a special case when this
computation is feasible). This led various authors suggest different
formulas for the pricing of defaultable bonds.

An alternative formula to (\ref{e:dss})  for the price of a zero-coupon defaultable bond before default is given by
\be
Z_t^{-1}\bbE[Z_T|\cF^Y_t],
\ee where $Z$ is the so-called {\em Az\'ema supermartingale} defined by $Z_t:=\bbP[\tau>t|\cF^Y_t]$. One should mention at
this point the works of Collin Dufresne, et al.~\cite{dhg}, Bielecki, et al. \cite{bjr}, Coculescu,
et al.~\cite{cjn} and Coculescu \& Nikeghbali \cite{cn} as good
references that are attempting to solve the valuation problem in the
general case. The papers \cite{cjn}, \cite{cn} and \cite{bjr} also contain a
discussion of several approaches to obtain the valuation formula.

The main assumption in the
formulas which compute the price as a discounted conditional
expectation in the works listed above, and in many others, is that the increasing process
$(\chf_{[\tau \leq t]})_{t \geq 0}$ has an absolutely continuous compensator
leading to the canonical decomposition described in
(\ref{e:acc}). This assumption has found widespread use in models of
credit risk due to intuitive representation of $\lambda$ as the
probability of default in the next instant (see \cite{dl01} for the
relation between $\lambda$ and credit spreads). In a recent paper,
Janson et al.~\cite{jmp} have identified a set of natural sufficient
conditions under which $(\chf_{[S \leq t]})_{t \geq 0}$ has an absolutely
continuous compensator for {\em any} totally inaccessible stopping time $S$ with respect to the natural filtration of a Markov process from a certain class.

The present paper has two main objectives. In Section 2 we show
that, under natural regularity conditions, $(\chf_{[\tau \leq t]})_{t \geq 0}$ has
an absolutely continuous $\cG$-compensator when $\tau$ is the first hitting
time of $0$ for the diffusion
\be\label{e:xmarkov}
X_t=X_0+W_t+\int_0^t a(X_s)\, ds
\ee
and the observation process is given by
\be \label{e:ymarkov}
Y_t=B_t+ \int_0^t b(s, X_s)\, ds,
\ee
where $B$ and $W$ are independent standard Brownian motions. More precisely, we
show the existence of an $\cF^Y$-adapted process $(\lambda_t)_{t \geq 0}$ such
that the process in (\ref{e:acc}) is a $\cG$-martingale, where $\cG$
is, as usual, the smallest filtration satisfying usual conditions that contains
$\cF^Y$ and make $\tau$ a stopping time. Modelling the default time as
the first hitting time of a stochastic process is desirable since it
is consistent with economic intuition that the equity owners are
likely to declare default when the firm value falls below a certain
level as we mentioned before. However, the disadvantage of this choice
when the underlying stochastic process is continuous is that the
default time becomes a predictable stopping time in the natural
filtration of the underlying so that it does not admit an
intensity. We refer the reader to the discussion in \cite{jp04} for
the problems with the default time being predictable. Our results show
that although the first hitting time of a continuous diffusion is a
predictable stopping time, if we shrink the filtration under the more reasonable assumption that the firm value can only
be observed with some noise, the default time becomes a totally
inaccessible stopping time and admit an
intensity. We will see that the finite variation part of the Doob-Meyer decomposition of $Z$ is absolutely continuous, which will in turn imply the existence of $\lambda$ leading to (\ref{e:acc}).  An explicit representation for $\lambda$ is also given. We achieve this by computing the
canonical representation of the associated {Az\'ema supermartingale}
using tools from non-linear filtering. We remark here that the results
of Janson et al.~\cite{jmp} are not applicable to yield an absolutely
continuous compensator since $\tau$ is not a totally inaccessible
stopping time, in fact it is predictable, in the natural filtration of
$X$ and it is, in general, not a stopping time with respect to the
filtration generated by $Y$. Thus, our results indicate that the existence of a default intensity requires much weaker conditions when there is only a noisy information on the fundamental processes that drive the default event. As for the pricing of defaultable securities, the existence of an absolutely continuous compensator implies that one can use the formulae in the aforementioned works which assume its existence. Moreover, at the end of Section 2, we will suggest some alternatives to the formula given in (\ref{e:dss}).

In view of the results in Section 2, we solve in Section 3 the
nonlinear filtering problem corresponding to the $\cG$-optional
projection of semimartingales. In
particular we obtain the {\em Kushner-Stratonovich equations} for the
$\cG$-conditional distribution of $X$.
As a by product, this suggests yet another alternative formula to price
defaultable bonds. Another use of the solution to this filtering
problem is that it immediately gives us the explicit semimartingale
decomposition of the price processes of defaultable assets, which are in
general not easy to compute
using, e.g.,  the formula (\ref{e:dss}) mentioned above. On the way to
the solution of the filtering problem, we also
briefly discuss a common assumption in default risk models, the
so-called {\bf H}-hypothesis, due to its connection to a certain {\em martingale representation
  result} which was essential in our proof of equations of nonlinear
filtering. As an application of the filtering equations,  the Doob-Meyer
decomposition for the value process of the rebate is calculated and the
equation of extrapolation is given.  An extension of the filtering equations
to a non-Markovian setting is also discussed at the end of Section 3.

Finally, it's worth  emphasising that
the setup considered in Section 2 and 3 and the specific filtering problem studied in Section
3 cannot be viewed within the standard class of filtering problems
with jump-diffusion observation which have been previously studied and
applied to
credit risk (see \cite{fr08} and \cite{CeCo}). In these models the
default times are the jump times of a marked point process or a
jump-diffusion and as such they are {\em totally inaccessible} stopping
times, with respect to the large filtration to which all the processes
are adapted, and admit an intensity. As a consequence, in every
shrinkage of the filtration, the default indicator processes will continue to have
absolutely continuous compensators. However, in our setup the default
time, being the first hitting time of a continuous diffusion, is a {\em predictable} stopping time in the large filtration and,
thus, does not admit an intensity. Moreover, it is not a priori clear
how much one needs to shrink the large filtration in order to make the
default time a totally inaccessible stopping time. These
considerations make it impossible to represent our filtering problem
within the framework of the
above models. Consequently, one needs to develop a different approach
and in Section 2 and 3 we follow the one that is outlined  above.

\section{Existence of an absolutely continuous compensator} \label{s:model}
Let $B$ and $W$ be two independent standard Brownian motions with $B_0=W_0=0$ defined
on $(\Om, \cF,(\cH_t)_{t\geq 0}, \bbP)$ satisfying the usual
hypotheses. All processes  in this and subsequent  sections will be
defined on this filtered probability space. Observe that $\cH$ is
allowed to be strictly larger than the filtration generated by $B$ and $W$.

Suppose $X$ is a diffusion which is a strong solution to
\be \label{e:sdeX}
X_t=X_0 + W_t + \int_0^t a(X_s)\,ds,
\ee
where $X_0>0$ is an $\cH_0$-measurable random variable with $\bbP(X_0\in dx)=\mu(dx)$ where $\mu$ is a probability measure on the Borel subsets of $(0,\infty)$.
\begin{assumption} \label{a:a} $\bbE X_0^2 < \infty$ and  the function $a:\bbR \mapsto \bbR$ satisfies the following:
\begin{enumerate}
\item $a$ is continuously differentiable with a bounded derivative.
\item $\lim_{x \rar \infty}A(x)$ exists, possibly infinite, where
\[
A(x):=\int_0^{x}a(y)\,dy.
\]
\item $a(\infty):=\lim_{x \rar \infty}a(x)$ exists (possibly infinite).
If $a(\infty)=-\infty$ then there exists some $K_a>0$ and $g_a \geq 0$ such that for any $x\geq g_a$
 \[
 a(x)=-K_a x + f_a(x)
 \]
 where $f_a$ is a negative function such that $-\int_0^x f_a(y)\,dy
 \leq c_f x^p$ for some $p <2$.
\end{enumerate}
\end{assumption}
\begin{remark} \label{r:2ndmomentX} Under Assumption \ref{a:a} there
  exists a unique strong solution to (\ref{e:sdeX}) such that for
  every $T>0$, $\bbE X_t^2 \leq \gamma (1+ \bbE X_0^2)e^{\gamma t}$ for all $t \in [0,T]$ for some constant $\gamma$ that depends only on $T$ and the upper bound on the derivative of $a$ (see Theorem 5.2.9 in \cite{ks}).
\end{remark}
\begin{remark} The assumption on the asymptotic behavior of $a$ is to
  ensure that $a$ does not make unbounded oscillations when it
  diverges to $-\infty$. This will be used in obtaining bounds on the
  density of the first hitting time of $0$ by $X$ below. Note that
  this assumption is satisfied when $X$ is a Gaussian process,
  i.e. when $a$ is affine.
\end{remark}
Let
\[ \tau:=\inf\{t>0: X_t = 0\}\] and define
\be \label{d:H}
H^{a}(t,x):=P_x[\tau>t],
\ee
where $P_x$ is the law of the solutions of (\ref{e:sdeX}) with $X_0=x$. Observe that $\tau$ is a predictable $\cH$-stopping time. As such, the $\cH$-compensator of the process $(\chf_{[\tau>t]})_{t \geq 0}$ is the process itself. Our main goal in this section is to show that under a particular shrinkage of the filtration, this process will have an absolutely continuous compensator.

We will show in the theorem below that
\[
H^a(t,x)=1-\int_0^t\ell^a(u, x)\,du,
\]
for some function $\ell^a$ along with some further properties of the density which will be useful in the sequel for the existence of an absolutely continuous compensator. Recall that when $a\equiv0$
\[
\ell^a(t,x)=\ell(t,x):=\frac{x}{\sqrt{ 2 \pi t^3}}\exp\left(-\frac{x^2}{2
t}\right),
\]
for $x>0$, which  is the probability density function of the first hitting time of $0$ for a standard Brownian motion started at $x$. We will also drop the superscript in $H^a$ when $a \equiv 0$ for notational convenience.
\begin{theorem} \label{t:Hdensity}  Let $H^a$ be as in (\ref{d:H}). Then, under Assumption \ref{a:a},
\begin{enumerate}
\item $H^a$ is absolutely continuous. That is, there exists a function $\ell^a$ such that
\[
H^a(t,x)=1-\int_0^t\ell^a(u, x)\,du, \qquad \forall x>0.
\]
Moreover, $H^a(t,x)>0$ for all $t \geq 0$ and $x>0$.
\item Let
\[
\delta:=\sup_{x \in \bbR_+}\frac{x}{e^{\frac{x}{6}}-e^{-\frac{5x}{6}}}
\]
and $K_g$ be the smallest constant, $K$, for which $|a(x)|\leq K(1+|x|)$ for all $x \in \bbR$. Then,
\be \label{e:tauinv}
\int_0^{\infty} \frac{1}{s}\ell^a(s,x) \,ds \leq 2 \delta^{3/2} \frac{1+ K_g x}{x^2}.
\ee
\item The mapping $t \mapsto t \ell^a(t,x)$ is locally bounded uniformly in $x$.
\end{enumerate}
\end{theorem}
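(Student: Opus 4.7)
The plan is to remove the drift of (\ref{e:sdeX}) by a Girsanov transformation and then exploit the explicit density of the Brownian first hitting time. Under Assumption \ref{a:a} the drift $a$ is Lipschitz with linear growth, so the Dol\'eans--Dade exponential
\[
L_t=\exp\left(-\int_0^t a(X_s)\,dW_s-\tfrac12\int_0^t a(X_s)^2\,ds\right)
\]
is a true $P$-martingale; under the locally equivalent measure $Q$ defined by $dQ|_{\cF_t}=L_t\,dP|_{\cF_t}$, the process $X$ is a standard Brownian motion started at $x$. Applying It\^o's formula to $A(X_t)$ gives
\[
L_t^{-1}=\exp\left(A(X_t)-A(x)-\tfrac12\int_0^t q(X_s)\,ds\right),\qquad q(y):=a'(y)+a(y)^2,
\]
which is a $Q$-martingale.

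For part (1), since $P_x\sim Q_x$ on each $\cF_t$ and $\{\tau\le t\}\in\cF_t$, the known Brownian density $\ell(\cdot,x)$ of $\tau$ under $Q_x$ forces $\tau$ to be absolutely continuous under $P_x$ as well. Applying optional sampling to $L^{-1}$ at the bounded stopping time $t\wedge\tau$, and using $X_\tau=0\Rightarrow A(X_\tau)=0$, yields
\[
P_x[\tau\le t]=E^Q_x\left[L_\tau^{-1}\chf_{\tau\le t}\right]=e^{-A(x)}\int_0^t\ell(s,x)\,\nu(s,x)\,ds,
\]
where
\[
\nu(s,x):=E^Q_x\left[\exp\left(-\tfrac12\int_0^s q(X_u)\,du\right)\Big|\,\tau=s\right]
\]
is the conditional expectation under the Brownian bridge from $x$ to $0$ of length $s$. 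Hence
\[
\ell^a(s,x)=e^{-A(x)}\,\ell(s,x)\,\nu(s,x),
\]
and $H^a(t,x)=E^Q_x[L_t^{-1}\chf_{\tau>t}]>0$ because $L_t^{-1}>0$ and $Q_x[\tau>t]>0$.

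For part (3), the bound $s\ell(s,x)\le(2\pi e)^{-1/2}$ combined with $\nu(s,x)\le\exp(\|a'\|_\infty s/2)$ (since $q\ge-\|a'\|_\infty$) suffices whenever $A$ is bounded below uniformly in $x$. In the remaining case $A(\infty)=-\infty$, Assumption \ref{a:a} gives $a(x)=-K_a x+f_a(x)$, so $q(x)\sim K_a^2 x^2$ dominates $-A(x)\sim K_a x^2/2$; one then works directly with
\[
t\,\ell^a(t,x)=\frac{x}{\sqrt{2\pi t}}\,e^{-x^2/(2t)}\,e^{-A(x)}\,\nu(t,x)
\]
and shows that the Gaussian factor $e^{-x^2/(2t)}$ together with the extra $\nu$-decay arising from $q(x)\sim K_a^2 x^2$ overwhelm the $e^{K_a x^2/2}$ growth of $e^{-A(x)}$ on every compact $t$-interval; the sub-quadratic bound $-\int_0^x f_a\le c_f x^p$ with $p<2$ is what prevents the cross term from disrupting this cancellation.

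Part (2) is the main obstacle. The identity $\int_0^\infty s^{-1}\ell^a(s,x)\,ds=E^P_x[1/\tau]$ together with the Laplace representation $1/\tau=\int_0^\infty e^{-\lambda\tau}\,d\lambda$ reduces matters to estimating the Laplace transform $x\mapsto E^P_x[e^{-\lambda\tau}]$ uniformly in $\lambda$. The function $\psi(x)=e^{x/6}-e^{-5x/6}$ appearing in the definition of $\delta$ is precisely the Dirichlet eigenfunction of $\tfrac12\partial_{xx}+\tfrac13\partial_x-\tfrac{5}{72}$ vanishing at $0$, and rescalings $\psi(cx)$ furnish eigenfunctions for a family of constant-drift generators parametrised by drift size. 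I would use the linear-growth bound $|a(x)|\le K_g(1+|x|)$ to dominate $X$ (via either a pathwise comparison or a further Girsanov tilt) by a diffusion with a constant drift proportional to $K_g$, exhibit a rescaled $\psi(cX_t)e^{-\mu t}$ as a supermartingale for appropriate $c,\mu$, apply optional sampling at $t\wedge\tau$, and integrate in $\lambda$. The delicate point is engineering the comparison so that $K_g$ appears only through the affine factor $1+K_g x$ while $\delta^{3/2}$ absorbs the remaining universal constants.
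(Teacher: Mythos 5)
Your part (1) is essentially the paper's argument (Girsanov removal of the drift, identification of $\ell^a(s,x)=e^{-A(x)}\ell(s,x)\nu(s,x)$ with $\nu$ a conditional expectation given $\tau=s$), up to one slip: conditioning Brownian motion on its first hitting time of $0$ gives a three-dimensional Bessel bridge, not a Brownian bridge, and this identification (Williams' time reversal) is exactly what makes $\nu$ computable later. The genuine gap is part (2), which you yourself leave as a plan. Two problems. First, the comparison you propose goes the wrong way: the linear-growth bound gives $a(x)\geq -K_g(1+x)$ on $x\geq 0$, so $X$ dominates the process with drift $-K_g(1+x)$, which in turn is dominated by Brownian motion with constant drift $-K_g$; hence the constant-drift diffusion hits $0$ \emph{more slowly} and its value $\bbE_x[1/\tau]=(1+K_gx)/x^2$ is a \emph{lower} bound for the quantity you need to bound from above (this is also why the paper's bound carries the extra factor $2\delta^{3/2}\approx 6.6$). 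Second, the eigenfunction reading of $\psi(x)=e^{x/6}-e^{-5x/6}$ is a red herring: in the actual proof $\delta$ enters only as $\sup_{y\geq 0} y/(e^{y/6}-e^{-5y/6})$ applied to the \emph{time} variable $y=K_g t$, after one bounds the Bessel-bridge functional $\mathbb{E}^{(3)}_x[\exp(-\tfrac{K_g^2}{2}\int_0^t(1+X_s)^2ds)\,|\,X_t=0]$ by the explicit formula (Yor's Formula 2.5) involving $(K_gt/\sinh(K_gt))^{3/2}$ and $\coth(K_gt)$, and then computes $\int_0^\infty e^{-K_g^2 t/2}t^{-1}\ell(t,x)\,dt=-\partial_x(x^{-1}e^{-K_gx})$. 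Without this explicit conditional Laplace transform (or an equivalent explicit OU hitting-time density), your supermartingale/optional-sampling scheme has no mechanism to produce the stated constant, and you give no argument that it can; so (\ref{e:tauinv}) is unproved in your proposal.

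Part (3) has a related, smaller gap: in the case $A(\infty)=-\infty$ you invoke the decomposition $a(x)=-K_ax+f_a(x)$, which Assumption \ref{a:a} only provides when $a(\infty)=-\infty$ (the case of bounded $a$ with $A(\infty)=-\infty$, e.g. $a\equiv -1$, must be treated separately, though it is easy), and the claim that the $\nu$-decay coming from $q(x)\sim K_a^2x^2$ beats $e^{-A(x)}\sim e^{K_ax^2/2}$ is asserted rather than proved; the paper establishes it by changing measure to the Ornstein--Uhlenbeck process with parameter $K_a$, using the sub-quadratic control on $\int_0^x f_a$ and the explicit OU hitting density (G\"oing-Jaeschke--Yor) in which the factor $\exp(-\tfrac{K_a}{2}x^2(\coth(K_at)-1))$ supplies exactly the required cancellation. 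You would need to carry out a quantitative estimate of this kind for both parts (2) and (3) to close the argument.
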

\begin{proof} See Appendix.
\end{proof}

In addition to $X$ there is also an {\em observation process} $Y$
which is defined by
\be \label{e:sdeY}
Y_t= B_t + \int_0^t b(s, X_s)\, ds
\ee
 and $b: \bbR_+ \times \bbR^2\mapsto \bbR$ is satisfying the
 following:
\begin{assumption} \label{a:b} $b(t,0)=0$ for all $t \geq 0$. Moreover, $b$ is
  locally Lipschitz in $x$, thus, for every $T>0$ there exists a
  $K_b(T)$ such that $|b(t,x)| \leq K_b(T) |x|$ for all $t \leq T$.
\end{assumption}
Note that the assumption $b(t,0)=0$ for all $t$ is without loss of
generality since the filtrations generated by $Y$ or $Y-
\int_0^{\cdot}b(s,0)\, ds$ are the same.

In this section we are mainly interested in the {\em
Az\'ema supermartingale}
\be \label{d:Z}
Z_t:=\bbP[\tau
> t|\cF^Y_t]
\ee
 where $\cF^Y$ is the minimal filtration satisfying the usual
 conditions generated by $Y$. As the conditional expectation is only
 defined almost surely for each $t$, $Z$ is defined to be the unique
 $\cF^Y$-\emph{optional projection} of $(\chf_{[\tau>t]})_{t \geq 0}$. We recall the definition of optional projection here for the convenience of the reader.
\begin{definition}\label{d:OP} Let $U$ be a positive or bounded measurable process and $(\cF_t)$ be a filtration satisfying the usual conditions. Then, the $(\cF_t)$-optional projection of $U$ is the $(\cF_t)$-optional process $V$ such that for any $\cF$-stopping time $S$ the following holds:
\[
\bbE[U_S\chf_{[S<\infty]}|\cF_S]=V_S\chf_{[S<\infty]}.
\]
\end{definition}
The above definition, taken from Section 5 in Chap. IV of \cite{RY}, has an obvious extension
to integrable measurable processes.  We emphasize here that this
choice of optional projection will be made without notice
whenever we consider processes defined by projection onto a smaller filtration, in particular when we consider the filtering of a signal by an observation process.

 $Z$, being a $(\bbP, \cF^Y)$-supermartingale, has a \cadlag
modification  due to the
continuity of the map $t \mapsto \bbP[\tau>t]$,  (see Theorem 2.9 in Chap.~II of \cite{RY}), which we will use
henceforth. Note that $Z$ is a
nonnegative supermartingale of class $D$.
The Doob-Meyer decomposition for such
supermartingales (see Theorem 8 in
Chap.~III of \cite{Pro}) gives the following.
\begin{proposition} \label{p:Z-DM} There exists a unique
  increasing and $\cF^Y$-predictable process $C$ with $C_0=0$ such
  that $Z+C$ is a uniformly integrable
  $(\bbP,\cF^Y)$-martingale.
\end{proposition}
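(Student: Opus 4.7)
The plan is to invoke the Doob--Meyer decomposition theorem (as cited, Theorem 8, Chapter III of \cite{Pro}) directly; the proposition is essentially the statement of that theorem specialized to the present $Z$, so the work consists of checking its hypotheses. Concretely, I need: (a) $\cF^Y$ satisfies the usual conditions, (b) $Z$ is a \cadlag $\cF^Y$-supermartingale, and (c) $Z$ is of class (D). Item (a) holds by definition of $\cF^Y$ as the smallest such filtration generated by $Y$. For (b), the \cadlag choice has already been made in the paragraph preceding the statement via the continuity of $t\mapsto \bbP[\tau>t]$ and Theorem~2.9 in Chap.~II of \cite{RY}; the supermartingale property is a one-line computation using the tower property and monotonicity of $t\mapsto \chf_{[\tau>t]}$:
\[
\bbE[Z_t\mid \cF^Y_s]=\bbE\bigl[\chf_{[\tau>t]}\mid \cF^Y_s\bigr]\leq \bbE\bigl[\chf_{[\tau>s]}\mid\cF^Y_s\bigr]=Z_s,\qquad s\leq t.
\]
For (c), since $0\leq Z\leq 1$ the family $\{Z_S:\,S\text{ an }\cF^Y\text{-stopping time}\}$ is uniformly bounded and hence uniformly integrable, so $Z$ is trivially of class (D). This was already observed at the end of the paragraph preceding the proposition.

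With (a)--(c) in hand, the Doob--Meyer theorem produces a unique \cadlag, $\cF^Y$-predictable, integrable, increasing process $C$ with $C_0=0$ and a uniformly integrable $(\bbP,\cF^Y)$-martingale $M$ such that $Z=M-C$; equivalently $M=Z+C$. Uniqueness of $C$ is part of the theorem's conclusion (predictable increasing processes with $A_0=0$ that differ by a martingale must coincide). There is no genuine obstacle here: the only thing to be careful about is that one uses the version of Doob--Meyer that yields a predictable, rather than merely natural, compensator, which is exactly the form stated in \cite{Pro}. The nontrivial analytic content of the proposition — an explicit description of $C$, and in particular its absolute continuity — is deferred to the subsequent filtering arguments of Section~2.
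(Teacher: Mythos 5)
Your proposal is correct and follows exactly the route the paper takes: the paper's ``proof'' is the sentence preceding the proposition, namely that $Z$ is a nonnegative (hence bounded, class (D)) \cadlag $(\bbP,\cF^Y)$-supermartingale, so Theorem~8 in Chap.~III of \cite{Pro} applies directly. Your verification of the hypotheses (usual conditions, \cadlag modification via continuity of $t\mapsto\bbP[\tau>t]$, class (D) from $0\leq Z\leq 1$) merely makes explicit what the paper leaves implicit.
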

In the rest of this section we will compute the above decomposition
explicitly and discuss some of its consequences. The following is the main result of this section
whose lengthy proof is delegated to the appendix.
\begin{theorem} \label{t:AC} Let $Z$ be the Az\'ema supermartingale given by
  (\ref{d:Z}),  and $C$ be as in Proposition \ref{p:Z-DM}. Then,
  under Assumption \ref{a:a} and
  \ref{a:b} the following holds:
\begin{enumerate}
\item $Z$ is a.s. strictly
positive and  for any $ t \geq 0$
\bea \label{e:Zfilter}
Z_t&=&\bbE[H^a(t, X_0)] \\
&&+\int_0^t
\bbE\left[\chf_{[\tau>s]}H^a(t-s,X_s)\left(b(s,X_s)-\bbE[b(s,X_s)|\cF^Y_s]\right)\big|\cF^Y_s\right]\,
dB^Y_s, \nn
\eea
where \[
B^Y_t=Y_t-\int_0^t\bbE[b(s,X_s)|\cF^Y_s]\,ds
\]
is an $\cF^Y$-Brownian motion.
\item $C_t=\int_0^t  c_s\, ds$, where
\[
c_t=\int_0^{\infty} \ell^a(t,x)\mu(dx)+ \int_0^t \bbE[\chf_{[\tau>s]}\ell^a(t-s, X_s)\left(b(s,X_s)-\bbE[b(s,X_s)|\cF^Y_s]\right)|\cF^Y_s]dB^Y_s,
\]
and $\mu$ corresponds to the initial distribution of $X_0$.
\item $Z+C$ is a uniformly integrable $(\bbP, \cF^Y)$-martingale defined by $ Z_t+C_t=\int_0^t \eta_s\, dB^Y_s$, where
\[
\eta_t:=\bbE[\chf_{[\tau>t]}b(t,X_t)|\cF^Y_t]-Z_t\bbE[b(t,X_t)|\cF^Y_t].
\].
\end{enumerate}
\end{theorem}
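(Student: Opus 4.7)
The plan is to identify $Z_t = \bbE[\chf_{[\tau>t]}\mid\cF^Y_t]$ by filtering a carefully chosen $\cH$-martingale, and then to extract its Doob--Meyer decomposition by exploiting the absolute continuity of $H^a$ from Theorem \ref{t:Hdensity}. First, fix $t>0$ and introduce
\[
M^t_u := \chf_{[\tau>u]}H^a(t-u, X_u), \qquad 0 \leq u \leq t.
\]
The strong Markov property of $X$ together with the definition $H^a(s,x) = P_x[\tau>s]$ gives $\bbE[\chf_{[\tau>t]}\mid \cH_u] = M^t_u$, so $M^t$ is a bounded $\cH$-martingale with $M^t_t = \chf_{[\tau>t]}$ and $M^t_0 = H^a(t, X_0)$. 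Consequently $Z_t$ is the terminal value of the $\cF^Y$-optional projection of $M^t$, and it suffices to filter the martingale $M^t$.

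To carry out this filtering I invoke the innovations approach. The integrability condition $\int_0^T \bbE[b(s,X_s)^2]\,ds < \infty$ on any finite horizon follows from the linear growth of $b$ (Assumption \ref{a:b}) and the $L^2$-bound of Remark \ref{r:2ndmomentX}, so by L\'evy's theorem
\[
B^Y_t := Y_t - \int_0^t \bbE[b(s,X_s)\mid\cF^Y_s]\,ds
\]
is an $\cF^Y$-Brownian motion. Because $M^t$ is an $\cH$-martingale whose driving noise is $W$ (independent of $B$), $\langle M^t, B\rangle \equiv 0$, and the Fujisaki--Kallianpur--Kunita equation yields
\[
\bbE[M^t_u\mid\cF^Y_u] = \bbE[M^t_0] + \int_0^u \bbE\bigl[M^t_s(b(s,X_s) - \hat b_s) \mid \cF^Y_s\bigr]\,dB^Y_s, \quad \hat b_s := \bbE[b(s,X_s)\mid\cF^Y_s].
\]
Evaluating at $u=t$ produces formula (\ref{e:Zfilter}). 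Strict positivity of $Z$ follows from a Kallianpur--Striebel change of measure: on a bounded horizon pass to a reference probability $\tilde\bbP$ under which $Y$ is a Brownian motion independent of $X$; the unnormalised filter $\tilde\bbE[\chf_{[\tau>t]}L_t\mid \cF^Y_t]$ is then a.s.\ strictly positive since the Girsanov density $L_t$ is strictly positive and $P_{X_0}[\tau>t] = H^a(t,X_0)>0$ a.s.\ by Theorem \ref{t:Hdensity}.

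For parts (2) and (3), I differentiate (\ref{e:Zfilter}) in $t$ using the representations $H^a(t-s,X_s) = 1 - \int_s^t \ell^a(v-s,X_s)\,dv$ and $\bbE[H^a(t,X_0)] = 1 - \int_0^t\!\int \ell^a(v,x)\mu(dx)\,dv$. Splitting the stochastic integral accordingly and applying a stochastic Fubini theorem to the resulting double integral $\int_0^t\!\int_s^t \bbE[\chf_{[\tau>s]}\ell^a(v-s,X_s)(b(s,X_s)-\hat b_s)\mid\cF^Y_s]\,dv\,dB^Y_s$ to swap the order of integration, I obtain
\[
Z_t = 1 + \int_0^t \eta_s\,dB^Y_s - \int_0^t c_v\,dv
\]
with $\eta$ and $c$ as stated. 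Uniqueness of the Doob--Meyer decomposition in Proposition \ref{p:Z-DM} then identifies $C_t = \int_0^t c_v\,dv$ as the predictable increasing part and $Z+C$ as the UI martingale.

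The main technical obstacle is the legitimation of the stochastic Fubini step and the integrability hypotheses of the filtering equation, both of which demand integrable control of the random integrands $\ell^a(v-s,X_s)$ and $H^a(t-s,X_s)b(s,X_s)$. The bound (\ref{e:tauinv}) and the local boundedness of $t\mapsto t\ell^a(t,x)$ from Theorem \ref{t:Hdensity}, combined with the linear growth of $b$ and the $L^2$-estimate on $X$ of Remark \ref{r:2ndmomentX}, are precisely what makes these exchanges valid and additionally delivers the uniform integrability of $Z+C$.
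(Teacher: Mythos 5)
Your proposal is correct and follows essentially the same route as the paper: the same auxiliary $\cH$-martingale $\chf_{[\tau>u]}H^a(t-u,X_u)$, the same innovations (FKK/Liptser--Shiryaev) filtering equation with the integrability check from Remark \ref{r:2ndmomentX}, the same reference-measure argument for strict positivity, and the same split of $H^a$ via $\ell^a$ followed by a stochastic Fubini justified by the bound (\ref{e:tauinv}) and the local boundedness of $t\mapsto t\ell^a(t,x)$. The only difference is that you state rather than carry out the verification of the Fubini integrability condition, but you cite exactly the estimates the paper uses for it.
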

\begin{proof}
See the Appendix.
\end{proof}

The next remark is considering a possible relaxation of the
independence assumption on $B$ and $W$. However, as it heavily relies
on a certain argument in the proof of Theorem \ref{t:AC},  the reader
is invited to read the following remark along with the proof of the preceding theorem.

\begin{remark} \label{r:independence} A natural question at this point is `how important is
  the independence assumption on $B$ and $W$?' To this end let's
  suppose $X=X_0+ W$  and $d[B,W]_t=\varrho_t\,dt$ where $\varrho$ is a
  progressively measurable process and $X_0$ is a strictly
  positive constant. Repeating what we did in the proof
  of Theorem \ref{t:AC} yields
\bean
Z_t&=&H^a(t, X_0) +\int_0^t \bbE[\chf_{[\tau>s]}H_x(t-s, X_s)\varrho_s|\cF^Y_s]\,dB^Y_s\\
&&+\int_0^t\Big\{
\bbE\left[\chf_{[\tau>s]}H(t-s,X_s)\left(b(s,X_s)-\bbE[b(s,X_s)|\cF^Y_s]\right)\big|\cF^Y_s\right]\Big\}\,
dB^Y_s. \nn
\eean
Thus, in order to arrive at a similar decomposition we obtained in Parts 2 and 3 of Theorem \ref{t:AC}, we will need some assumptions on the correlation
coefficient $\varrho$. Indeed, if $B$ and $W$ are the same Brownian
motions, i.e. $\varrho\equiv 1$, it is clear that $Z=\chf_{[\tau>t]}$ and
its compensator is itself. On the other hand if $|\varrho_t| \leq \varrho
|X|_t^2$ for some constant $\varrho \geq 0$ (at least when $X$ is within
some open interval including $0$),
then
\[
\chf_{[\tau>s]}|\varrho_s \ell_x(t-s,X_s)|\leq
\chf_{[\tau>s]}\varrho \, \ell(t-s,X_s)\left(X_s+\frac{X^3_s}{t-s}\right) .
\]
Thus, using the explicit form of $\ell$, one can repeat the arguments
that led to the explicit Doob-Meyer decomposition in Theorem \ref{t:AC}
to justify the interchange of ordinary and stochastic integrals and
establish that $C$ is absolutely continuous. Observe that by placing
this assumption on $\varrho$, what we in fact require is that the correlation
coefficient between two Brownian motions is vanishing quite
fast when $X$ is approaching to $0$, i.e. $B$ and $W$ are behaving
almost independently when $X$ is in a neighborhood of $0$.  It would
be interesting to investigate whether such a condition is a necessary
condition for $C$ to be absolutely continuous.
\end{remark}

In the Introduction, we claimed that the absolute continuity of $C$ would lead to $(\chf_{[\tau>t]})_{t \geq 0}$ having an absolutely continuous compensator. We are now in a position to make this precise and show that it is indeed the case. To this end, let $\cG=(\cG_t)_{t \geq 0}$
be the filtration generated by $D$ and $Y$, and augmented with the $\bbP$-null sets, where  $D_t:=\chf_{[\tau >t]}$. Then, $D$ is a
$\cG$-adapted \cadlag  $\bbP$-supermartingale and there exists a
$\cG$-predictable $\Lambda$ with $\Lambda_0=0$ such that $D +\Lambda$
is a $(\bbP, \cG)$-martingale. The $\cF^Y$-decomposition of $Z$ allows us to compute $\Lambda$ directly as follows:
\begin{corollary} \label{c:lambda} Under the assumptions of Theorem  \ref{t:AC},  $D+\Lambda$ is a $(\bbP, \cG)$-martingale such that $d\Lambda_t= \chf_{[\tau \geq
t]}\lambda_t dt$ where
\[
\lambda_t=\frac{\int_0^{\infty} \ell^a(t,x)\mu(dx) +\int_0^t \bbE[\chf_{[\tau>s]}\ell^a(t-s, X_s)\left(b(s,X_s)-\bbE[b(s,X_s)|\cF^Y_s]\right)|\cF^Y_s]dB^Y_s}{\int_0^{\infty} H^a(t,x)\mu(dx)+\int_0^t \bbE[\chf_{[\tau>s]}H^a(t-s, X_s)\left(b(s,X_s)-\bbE[b(s,X_s)|\cF^Y_s]\right)|\cF^Y_s]dB^Y_s},
\]
and $\mu$ is the probability distribution of $X_0$.
\end{corollary}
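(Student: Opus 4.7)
The plan is to derive the corollary by combining the explicit Doob-Meyer decomposition of $Z$ in Theorem \ref{t:AC} with the classical Jeulin--Yor formula that expresses the $\cG$-compensator of the default indicator in terms of the $\cF^Y$-Doob-Meyer decomposition of the Az\'ema supermartingale.

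First, I would invoke the Jeulin--Yor result from the theory of progressive enlargements (see, e.g., Chapter VI of \cite{Pro}): if $Z$ admits the $\cF^Y$-Doob-Meyer decomposition $Z = M - A$ and if $Z_{s-}>0$ on $\{0<s\leq \tau\}$, then $\chf_{[\tau\leq t]}-\int_0^{\tau\wedge t}Z_{s-}^{-1}\,dA_s$ is a $(\bbP,\cG)$-martingale. Setting $\Lambda_t:=\int_0^{\tau\wedge t}Z_{s-}^{-1}\,dA_s$, this is equivalent to the statement that $D+\Lambda$ is a $(\bbP,\cG)$-martingale.

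Next, I would verify that every hypothesis of this formula is supplied by Theorem \ref{t:AC}. Part (1) gives $Z>0$ a.s.; moreover (\ref{e:Zfilter}) exhibits $Z$ as a continuous deterministic function plus a stochastic integral against the $\cF^Y$-Brownian motion $B^Y$, so $Z$ is a.s.\ continuous and in particular $Z_{s-}=Z_s>0$. Part (3) then identifies the Doob-Meyer decomposition as $Z=(Z+C)-C$, so one takes $A=C$ in the classical formula, and Part (2) writes $C_t=\int_0^t c_s\,ds$ with the explicit density $c_s$ displayed in the statement of the theorem.

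Finally, substituting $dA_s=c_s\,ds$ and $Z_{s-}=Z_s$ into the Jeulin--Yor formula yields
\[
\Lambda_t=\int_0^{\tau\wedge t}\frac{c_s}{Z_s}\,ds=\int_0^t\chf_{[s\leq \tau]}\frac{c_s}{Z_s}\,ds,
\]
so $d\Lambda_t=\chf_{[\tau\geq t]}\lambda_t\,dt$ with $\lambda_t=c_t/Z_t$. Reading off $Z_t$ from Part (1) (with $\bbE[H^a(t,X_0)]=\int_0^\infty H^a(t,x)\,\mu(dx)$) and $c_t$ from Part (2) recovers the displayed ratio. The only step requiring genuine care is the appeal to the Jeulin--Yor formula, but the continuity and strict positivity of $Z$ render its standard hypotheses (including the avoidance of $\cF^Y$-stopping times by $\tau$) automatic, so the remainder is routine substitution. \qed
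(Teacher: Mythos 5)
Your proposal is correct and takes essentially the same route as the paper: the paper simply invokes the classical enlargement-of-filtration formula $\lambda_t=\frac{1}{Z_{t-}}\frac{dC_t}{dt}$ (citing Theorem 3.4 of \cite{cjn}, i.e.\ the Jeulin--Yor result you use) and then substitutes the explicit decomposition of Theorem \ref{t:AC} together with $\bbE[H^a(t,X_0)]=\int_0^\infty H^a(t,x)\,\mu(dx)$. Your extra verification that $Z$ is strictly positive and (via Parts 2 and 3, since the representation (\ref{e:Zfilter}) alone is a fixed-$t$ identity) continuous, so that $Z_{t-}=Z_t$ in the denominator, is a sound elaboration of the same argument.
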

\begin{proof} It is well-known (see, e.g., Theorem 3.4 in \cite{cjn}) that
\[
\lambda_t=\frac{1}{Z_{t-}}\frac{dC_t}{dt}.
\]
The result now follows from Theorem \ref{t:AC}, and that
\[
\bbE[H^a(t,X_0)]= \int_0^{\infty} H^a(t,x)\mu(dx).
\]
\end{proof}\\
Given the above formulation of $Z$ we have the following representation formula as a consequence of, e.g., Proposition 2.3 in Chap.~IX of \cite{RY}.
\begin{corollary} \label{c:Zrep} Under the assumptions of Theorem  \ref{t:AC},
\bean
Z_t&=&\exp\left(-\int_0^t\lambda_s\,ds\right) \xi_t^{-1}\kappa_t,
\mbox{ where}\\
\xi_t&=&\exp\left(\int_0^t\bbE[b(s,X_s)|\cF^Y_s]\,dY_s-\frac{1}{2}\int_0^t\bbE^2[b(s,X_s)|\cF^Y_s]\,ds\right),
\\
\kappa_t&=&\exp\left(\int_0^t
  \frac{\bbE[\chf_{[\tau>s]}b(s,X_s)|F^Y_s]}{Z_s}dY_s-\frac{1}{2}\int_0^t
  \frac{\bbE^2[\chf_{[\tau>s]}b(s,X_s)|F^Y_s]}{Z^2_s}\,ds\right).
\eean
\end{corollary}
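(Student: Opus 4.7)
The plan is to apply the multiplicative decomposition of a strictly positive continuous supermartingale (Proposition~2.3 in Chap.~IX of \cite{RY}), as suggested by the statement. By Theorem~\ref{t:AC}, $Z$ is strictly positive and continuous with additive decomposition $Z_t + C_t = \int_0^t \eta_s\,dB^Y_s$, and Corollary~\ref{c:lambda} gives $dC_t = \lambda_t Z_t\,dt$, so that
\[
dZ_t = \eta_t\,dB^Y_t - \lambda_t Z_t\,dt.
\]
Since $X_0 > 0$ almost surely, $Z_0 = \bbP[\tau > 0] = 1$. I would then apply It\^o's formula to $\log Z$ to obtain the multiplicative representation
\[
Z_t = \mathcal{E}\!\left(\int (\eta/Z)\,dB^Y\right)_{\!t}\cdot \exp\!\left(-\int_0^t \lambda_s\,ds\right),
\]
where $\mathcal{E}(\cdot)$ denotes the Dol\'eans-Dade exponential.

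The remaining work is to identify the stochastic exponential factor with $\xi_t^{-1}\kappa_t$. Writing $\alpha_t := \bbE[b(t,X_t)|\cF^Y_t]$ and $\beta_t := \bbE[\chf_{[\tau>t]}b(t,X_t)|\cF^Y_t]/Z_t$ gives $\eta_t/Z_t = \beta_t - \alpha_t$, while the innovation relation reads $dY_t = dB^Y_t + \alpha_t\,dt$. Rewriting the $dY$-integrals in the exponentials defining $\xi_t$ and $\kappa_t$ as $dB^Y$-integrals and completing the square via
\[
-\tfrac12\alpha_s^2 + \beta_s\alpha_s - \tfrac12\beta_s^2 = -\tfrac12(\beta_s - \alpha_s)^2
\]
delivers
\[
\xi_t^{-1}\kappa_t = \exp\!\left(\int_0^t(\beta_s-\alpha_s)\,dB^Y_s - \tfrac12\int_0^t(\beta_s-\alpha_s)^2\,ds\right),
\]
which is exactly $\mathcal{E}\!\left(\int (\eta/Z)\,dB^Y\right)_t$, so combining with the multiplicative representation above yields the claim.

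The only mildly delicate point I anticipate is the applicability of the multiplicative formula, which requires $Z$ to be strictly positive; this is precisely the content of Theorem~\ref{t:AC}(1). Beyond that, the proof reduces to It\^o's formula for $\log Z$ followed by a one-line algebraic verification, so I do not expect any substantive obstacle.
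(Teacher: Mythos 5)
Your proof is correct, but it takes a slightly different route from the paper's. The paper literally applies Proposition 2.3 in Chap.~IX of \cite{RY} (the variation-of-constants formula for linear SDEs) to $dZ_t=\bbE[\chf_{[\tau>t]}b(t,X_t)|\cF^Y_t]dB^Y_t-Z_t(\lambda_t\,dt+\bbE[b(t,X_t)|\cF^Y_t]dB^Y_t)$, which \emph{produces} the factorization $Z=\xi^{-1}e^{-\int_0^\cdot\lambda_s ds}\kappa$ with $\kappa$ first appearing in additive form, $\kappa_t=1+\int_0^t\xi_s e^{\int_0^s\lambda_r dr}\bbE[\chf_{[\tau>s]}b(s,X_s)|\cF^Y_s]\,dY_s$; it then reads off $d\kappa_t=\frac{\kappa_t}{Z_t}\bbE[\chf_{[\tau>t]}b(t,X_t)|\cF^Y_t]\,dY_t$ and exponentiates. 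You instead apply It\^o's formula to $\log Z$ to get $Z_t=\exp\left(-\int_0^t\lambda_s ds\right)\mathcal{E}\left(\int_0^\cdot(\eta_s/Z_s)\,dB^Y_s\right)_t$ and then \emph{verify} by completing the square (after switching $dY$ to $dB^Y$) that $\xi^{-1}\kappa$ coincides with that stochastic exponential; note in passing that RY IX.2.3 is the linear-SDE solution formula, not a multiplicative-decomposition statement, but since you never actually invoke it this is harmless. Both arguments rest on the same inputs: the SDE for $Z$ from Theorem \ref{t:AC} combined with $dC_t=\lambda_t Z_t\,dt$ from Corollary \ref{c:lambda}, $Z_0=1$, and strict positivity. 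The paper's route is constructive (it would find the factorization even if one did not know the answer), while yours is a shorter verification; the one point you should make explicit is that It\^o's formula for $\log Z$ needs the \emph{paths} of $Z$ to stay in $(0,\infty)$ on compacts, which follows from Theorem \ref{t:AC}(1) together with continuity of $Z$ (immediate from parts 2 and 3 of that theorem) and the standard fact that a nonnegative supermartingale sticks at zero after hitting it, so positivity at fixed times upgrades to positivity of paths.
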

\begin{proof} Note that
\[
dZ_t=\bbE[\chf_{[\tau>t]}b(t,X_t)|\cF^Y_t]dB^Y_t-Z_t(\lambda_t dt
+\bbE[b(t,X_t)|\cF^Y_t]dB^Y_t).
\]
Thus, it follows from  Proposition 2.3 in Chap.~IX of \cite{RY} that
\[
Z_t=\xi_t^{-1}\exp\left(-\int_0^t\lambda_s\,ds\right)\left(1+\int_0^t \xi_s\exp\left( \int_0^s\lambda_r\,dr\right)\bbE[\chf_{[\tau>s]}b(s,X_s)|F^Y_s]\,dY_s\right),\]
where
\[
\xi_t=\exp\left(\int_0^t\bbE[b(s,X_s)|\cF^Y_s]\,dY_s-\frac{1}{2}\int_0^t\bbE^2[b(s,X_s)|\cF^Y_s]\,ds\right).
\]
Let
\[
\kappa_t:=\left(1+\int_0^t \xi_s\exp\left(
    \int_0^s\lambda_r\,dr\right)\bbE[\chf_{[\tau>s]}b(s,X_s)|F^Y_s]\,dY_s\right)
\]
and observe  that $\kappa$ is strictly positive with
$d\kappa_t=\frac{\kappa_t}{Z_t}\bbE[\chf_{[\tau>t]}b(t,X_t)|F^Y_t]\,dY_t$,
i.e.
\[
\kappa_t=\exp\left(\int_0^t
  \frac{\bbE[\chf_{[\tau>s]}b(s,X_s)|F^Y_s]}{Z_s}dY_s-\frac{1}{2}\int_0^t
  \frac{\bbE^2[\chf_{[\tau>s]}b(s,X_s)|F^Y_s]}{Z^2_s}\,ds\right).
\]
\end{proof}
\begin{remark}The above corollary also gives the multiplicative
  decomposition of $Z$ as a product of a local martingale and a
  decreasing process. Indeed, it is a straightforward application of
  integration by parts formula to see that $\xi^{-1} \kappa$ is a
  $(\bbP, \cF^Y)$-local martingale. Observe that one can obtain the
  multiplicative decomposition directly from the Doob-Meyer
  decomposition of $Z$ since $Z=n e^{-\int_0^{\cdot}\lambda_s\,ds}$ where $dn=e^{\int_0^{\cdot}\lambda_s\,ds}(dZ+dC)$ with
  $n_0=1$, and
  $C$ is as defined in Proposition \ref{p:Z-DM}.
\end{remark}
We now will take a detailed look at the formula in
(\ref{e:dss}). Recall that the expression in (\ref{e:dss}) equals
\be \label{e:S}
S_t:=\bbP[\tau>T|\cG_t]
\ee
 on the set $[\tau>t]$.

Our aim in the rest of this section is to obtain alternative
representations for $S$  which will emphasize the role of default intensity
as a stochastic discount factor as observed in the Introduction. These
representations will be obtained via equivalent changes of probability
measure and the our first change of measure will be defined by the
process $M$ given by
\be \label{e:M}
M_t:=\exp\left(\int_0^t b(s,X_s)dY_s -\frac{1}{2}\int_0^T
  b^2(s,X_s)\, ds\right).
\ee
Observe that
\[
dM^{-1}_t=M^{-1}_t b(t,X_t)\,dB_t,
\]
and, thus, $M^{-1}$ is a strictly positive $(\bbP, \cH)$-martingale due to the fact that $(X,Y)$ is
a non-explosive solution to (\ref{e:sdeX}) and (\ref{e:sdeY}), see,
e.g., Exercise 2.10 in Chap.~IX of \cite{RY}. Therefore,  for each
$t>0$ one can define a probability measure  $\bbQ_t$ on $\cH_t$ such that
\[
\frac{d\bbQ_t}{d\bbP_t}=M_t^{-1},
\]
where $\bbP_t$ is the restriction of $\bbP$ to $\cH_t$. Under
$\bbQ_t$, $(Y_s)_{s \in [0,t]}$ is a standard Brownian motion
independent of $(X_s)_{s \in [0,t]}$. The reason  for defining a
family of probability measures rather than a single $\bbQ$ valid on
$\cH_{\infty}$ is due to the fact that $M^{-1}$ is not necessarily a
uniformly integrable martingale in this infinite horizon setting. Nevertheless, for notational convenience we
will drop the subscript in $\bbQ_t$ and write $\bbQ$ in what follows when no confusion
arises.

A useful observation, which we will often make use of in the sequel, is
that for any integrable and $\cH_t$-measurable random variable $F$
one has
\be \label{e:bayfor}
\bbE[F|\cF^Y_t]=\frac{\bbE^{\bbQ}[F
  M_t|\cF^Y_t]}{\bbE^{\bbQ}[M_t|\cF^Y_t]}.
\ee
In particular, taking $F=M_t^{-1}$ yields
\be \label{e:minv}
\bbE[M_t^{-1}|\cF^Y_t]=\frac{1}{\bbE^{\bbQ}[ M_t|\cF^Y_t]},
\ee
and $F=\chf_{[\tau>t]}$ gives
\be \label{e:ZQ}
Z_t=\frac{\bbE^{\bbQ}[ \chf_{[\tau>t]}M_t|\cF^Y_t]}{\bbE^{\bbQ}[
  M_t|\cF^Y_t]}.
\ee
The next lemma is folklore in Stochastic
  Filtering Theory  and would have followed from
  Theorem 8.1 in \cite{ls} if $M$ were a square integrable $(\bbQ, \cH)$-martingale. Since the standard texts on Filtering Theory does
  not appear to be giving the proof for the general case, we nevertheless provide its proof in the Appendix.

\begin{lemma} \label{l:RN} Under Assumption \ref{a:a} and \ref{a:b} we have
\bean
\xi_t=\bbEQ[M_t|\cF^Y_t]&=&1+\int_0^t
\bbEQ[M_s|\cF^Y_s]\bbE[b(s,X_s)|\cF^Y_s]\,dY_s, \\
\xi_t^{-1}=\bbE[M_t^{-1}|\cF^Y_t]&=&1-\int_0^t
\bbE[M_s^{-1}|\cF^Y_s]\bbE[b(s,X_s)|\cF^Y_s]\,dB^Y_s.
\eean
\end{lemma}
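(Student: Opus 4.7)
The plan is to prove the identity for $\xi_t=\bbEQ[M_t|\cF^Y_t]$ first, and then derive the identity for $\xi_t^{-1}$ from it by combining It\^o's formula with equation (\ref{e:minv}), which already tells us that $\bbE[M_t^{-1}|\cF^Y_t]=\xi_t^{-1}$.

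For the first identity I would start by observing that under $\bbQ$ the process $Y$ is an $\cH$-Brownian motion independent of $X$, and It\^o's formula applied to the exponential $M$ yields $dM_t=M_t b(t,X_t)\,dY_t$, so $M$ is a nonnegative $(\bbQ,\cH)$-local martingale; since $\bbEQ[M_t]=1$ for each $t$, it is in fact a true martingale and hence $\xi_t$ is a nonnegative $(\bbQ,\cF^Y)$-martingale with $\xi_0=1$. Because $Y$ remains an $\cF^Y$-Brownian motion under $\bbQ$, the Brownian martingale representation theorem supplies a continuous version of $\xi$ and an $\cF^Y$-predictable $\phi$ with
\begin{equation*}
\xi_t=1+\int_0^t\phi_s\,dY_s.
\end{equation*}
To identify $\phi$ I would project the $\bbQ$-dynamics of $M$ onto $\cF^Y$: if the $(\bbQ,\cF^Y)$-predictable projection may be interchanged with the stochastic integral, then $\phi_s=\bbEQ[M_s b(s,X_s)|\cF^Y_s]$, and Bayes' formula (\ref{e:bayfor}) applied with $F=b(s,X_s)$ rewrites this as $\bbEQ[M_s|\cF^Y_s]\bbE[b(s,X_s)|\cF^Y_s]=\xi_s\bbE[b(s,X_s)|\cF^Y_s]$, which is the claimed formula.

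The main obstacle is justifying this interchange of conditional expectation with the stochastic integral: the standard reference (Theorem 8.1 in \cite{ls}) requires $M$ to be square integrable under $\bbQ$, which is not guaranteed by Assumption \ref{a:b}. My plan is to localize via
\begin{equation*}
\tau_n:=\inf\{t\geq 0: M_t\geq n\}\wedge\inf\{t\geq 0: |X_t|\geq n\}\wedge n.
\end{equation*}
On $[0,\tau_n]$, Assumption \ref{a:b} makes the integrand $M_{s\wedge\tau_n}b(s,X_{s\wedge\tau_n})$ bounded, so the stopped processes fall within the scope of Theorem 8.1 of \cite{ls} and the identity holds up to $\tau_n$. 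Non-explosion of $X$ (Remark \ref{r:2ndmomentX}) together with the true-martingale property of $M$ gives $\tau_n\uparrow\infty$ almost surely, and one passes to the limit inside the conditional expectations by dominated convergence and inside the It\^o integral using the stability of stochastic integrals under localization.

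For the second identity, $\xi$ is a strictly positive continuous martingale (positivity follows because $\bbEQ[\xi_t]=1$ forces the first hitting time of $0$ to be almost surely infinite, by the absorption property of nonnegative continuous martingales), so It\^o's formula applied to $f(x)=1/x$ yields
\begin{equation*}
d\xi_t^{-1}=-\xi_t^{-2}\,d\xi_t+\xi_t^{-3}\,d\la\xi\ra_t=-\xi_t^{-1}\bbE[b(t,X_t)|\cF^Y_t]\bigl(dY_t-\bbE[b(t,X_t)|\cF^Y_t]\,dt\bigr).
\end{equation*}
By the definition of the innovations process $B^Y$, the bracketed expression equals $dB^Y_t$, and combining this with $\xi_t^{-1}=\bbE[M_t^{-1}|\cF^Y_t]$ from (\ref{e:minv}) completes the proof.
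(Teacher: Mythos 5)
Your strategy is genuinely different from the paper's. The paper observes that $\xi_t=\bbEQ[M_t|\cF^Y_t]$ is the density of $\bbP|_{\cF^Y_t}$ with respect to $\bbQ|_{\cF^Y_t}$, writes $\xi_t=1+\int_0^t\phi_s\xi_s\,dY_s$ by the predictable representation property of the Brownian filtration under $\bbQ$, and then identifies $\phi_s=\bbE[b(s,X_s)|\cF^Y_s]$ by Girsanov's theorem, using the innovations decomposition $Y=B^Y+\int_0^{\cdot}\bbE[b(s,X_s)|\cF^Y_s]\,ds$ under $\bbP$. This sidesteps precisely the interchange of projection and stochastic integral that you attack head-on; the paper's remark before the lemma ("would have followed from Theorem 8.1 in \cite{ls} if $M$ were square integrable") is exactly the obstruction you try to remove by localization. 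Your plan can be pushed through, and it is a legitimate alternative, but as written it has two concrete gaps.

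First, your $\tau_n$ are $\cH$-stopping times, not $\cF^Y$-stopping times, so "the identity holds up to $\tau_n$" is not what Theorem 8.1 of \cite{ls} delivers. Applied to the bounded signal $M^{\tau_n}$ it gives
\begin{equation*}
\bbEQ[M_{t\wedge\tau_n}|\cF^Y_t]=1+\int_0^t\bbEQ\bigl[\chf_{[s\leq\tau_n]}M_s b(s,X_s)\big|\cF^Y_s\bigr]\,dY_s,
\end{equation*}
and the left-hand side is \emph{not} $\xi_{t\wedge\tau_n}$: the projection of the stopped process is not the stopped projection. For the same reason "stability of stochastic integrals under localization" is unavailable — you cannot localize an $\cF^Y$-stochastic integral along times that are not $\cF^Y$-stopping times. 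What is actually needed is a limit argument: on the left, $M_{t\wedge\tau_n}=\bbEQ[M_t|\cH_{t\wedge\tau_n}]$ by optional stopping for the true $\bbQ$-martingale $M$, hence uniform integrability and $L^1(\bbQ)$-convergence of $\bbEQ[M_{t\wedge\tau_n}|\cF^Y_t]$ to $\xi_t$; on the right, dominate the error by $|\bbEQ[\chf_{[s>\tau_n]}M_s b(s,X_s)|\cF^Y_s]|\leq\xi_s\,\bbE[|b(s,X_s)|\,|\cF^Y_s]$, whose square is $ds$-integrable a.s.\ (by the moment bound as in (\ref{e:spz1})), and deduce that $\int_0^t(\cdot)^2\,ds\to0$ in probability, hence convergence of the stochastic integrals. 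Second, your positivity argument for $\xi$ is incorrect as stated: $\bbEQ[\xi_t]=1$ does not prevent a nonnegative continuous martingale from hitting zero (Brownian motion started at $1$ and absorbed at $0$ keeps expectation $1$). The correct argument is that $\xi_t=\bbEQ[M_t|\cF^Y_t]>0$ a.s.\ for each fixed $t$ because $M_t>0$, which together with absorption at $0$ for nonnegative continuous supermartingales rules out hitting zero before any deterministic time. With these repairs your derivation of the second identity via It\^o's formula and (\ref{e:minv}) coincides with the paper's.
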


In view of the above lemma we see that $\xi^{-1}$ is a
$(\bbP,\cF^Y)$-martingale. This leads to the following
\begin{corollary} \label{c:dbp} Let $S$ be as in (\ref{e:S}). Under
  Assumption \ref{a:a}  and \ref{a:b}
\[
S_t=\chf_{[\tau>t]}\bbE^{\bbQ}\left[\exp\left(-\int_t^T\lambda_s\,ds\right)\exp\left(\int_t^T \vartheta_s
  dY_s-\frac{1}{2}\int_t^T \vartheta_s^2\,ds\right)\bigg|\cF^Y_t\right],
\]
where $\vartheta_s=\frac{\bbE\left[\chf_{[\tau>s]}b(s,X_s)|F^Y_s\right]}{Z_s}$.
\end{corollary}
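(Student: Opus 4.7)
The plan is to combine the classical key lemma of progressive enlargement with the multiplicative representation of $Z$ given in Corollary \ref{c:Zrep}, then absorb the $\xi^{-1}$ factor into a change of measure to $\bbQ$.

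First, I would reduce the statement to a computation purely on $\cF^Y$. Since $\chf_{[\tau>T]} = \chf_{[\tau>T]}\chf_{[\tau>t]}$ for $T \geq t$, the standard key lemma of the progressive enlargement of $\cF^Y$ by $\tau$ (see e.g. Dellacherie-Meyer) gives
\[
S_t = \bbE[\chf_{[\tau>T]}\mid\cG_t] = \chf_{[\tau>t]}\,\frac{\bbE[\chf_{[\tau>T]}\mid\cF^Y_t]}{Z_t}.
\]
Using tower property with $\cF^Y_T$ and the definition of $Z$, the numerator equals $\bbE[Z_T\mid\cF^Y_t]$. So the corollary is reduced to verifying
\[
\frac{\bbE[Z_T\mid\cF^Y_t]}{Z_t} = \bbE^{\bbQ}\!\left[\exp\!\left(-\!\int_t^T\!\lambda_s\,ds\right)\exp\!\left(\int_t^T\!\vartheta_s\,dY_s-\tfrac12\!\int_t^T\!\vartheta_s^2\,ds\right)\Big|\cF^Y_t\right].
\]

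Second, I would plug in the multiplicative decomposition from Corollary \ref{c:Zrep}, namely $Z_u = e^{-\int_0^u\lambda_s\,ds}\xi_u^{-1}\kappa_u$. Since the factors $e^{-\int_0^u\lambda}$ and $\xi_u^{-1}$ are purely $\cF^Y$-adapted (and bounded below/controlled on $[0,T]$), the ratio $Z_T/Z_t$ splits into three factors, and
\[
\frac{\bbE[Z_T\mid\cF^Y_t]}{Z_t}
= \bbE^{\bbP}\!\left[\exp\!\left(-\!\int_t^T\!\lambda_s\,ds\right)\frac{\xi_T^{-1}}{\xi_t^{-1}}\cdot\frac{\kappa_T}{\kappa_t}\,\Big|\,\cF^Y_t\right].
\]
Moreover, the explicit expression for $\kappa$ from Corollary \ref{c:Zrep} shows that $\kappa_T/\kappa_t$ is exactly the stochastic exponential $\exp(\int_t^T\vartheta_s\,dY_s - \frac12\int_t^T\vartheta_s^2\,ds)$ that appears on the right-hand side of the target formula.

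Third, I would trade the factor $\xi_T^{-1}/\xi_t^{-1}$ for a change of measure. Lemma \ref{l:RN} says $\xi^{-1}$ is a strictly positive $(\bbP,\cF^Y)$-martingale with $\xi_t^{-1}=\bbE[M_t^{-1}\mid\cF^Y_t]$. Comparing against $d\bbQ_t/d\bbP_t = M_t^{-1}$ on $\cH_t$ and projecting, one sees that the restriction of $\bbQ_T$ to $\cF^Y_T$ has density $\xi_T^{-1}$ with respect to $\bbP|_{\cF^Y_T}$. Hence Bayes' rule on the sub-filtration $\cF^Y$ yields, for any $\cF^Y_T$-measurable nonnegative $G$,
\[
\bbE^{\bbP}\!\left[G\,\frac{\xi_T^{-1}}{\xi_t^{-1}}\,\Big|\,\cF^Y_t\right] = \bbE^{\bbQ}[G\mid\cF^Y_t].
\]
Applying this with $G = \exp(-\int_t^T\lambda_s\,ds)(\kappa_T/\kappa_t)$ gives exactly the desired right-hand side and completes the identification.

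The main obstacle is the measure-change step: $M^{-1}$ is not uniformly integrable on $\cH_\infty$, so one must work on a fixed horizon with the family $\bbQ_t$, and justify carefully that the density of $\bbQ_T|_{\cF^Y_T}$ with respect to $\bbP|_{\cF^Y_T}$ is indeed $\xi_T^{-1}$. Once this is in place, everything else is bookkeeping — the factorisation of $Z$ already delivers the $\exp(-\int\lambda)$ stochastic discount and the Doléans exponential of $\int\vartheta\,dY$, and the conditional integrability needed for Bayes follows from $Z_T \in [0,1]$ and the dominated convergence arguments underlying Corollary \ref{c:Zrep}.
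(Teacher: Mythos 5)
Your proof is correct and follows essentially the same route as the paper: the key lemma of progressive enlargement reduces $S_t$ to $\chf_{[\tau>t]}\bbE[Z_T|\cF^Y_t]/Z_t$, and then the multiplicative representation of Corollary \ref{c:Zrep} together with Lemma \ref{l:RN} and a Bayes-type change of measure to $\bbQ$ gives the stated formula. The only (immaterial) difference is that you apply Bayes on the subfiltration $\cF^Y$ using the projected density $\xi_T^{-1}$, whereas the paper writes $\bbE[Z_T|\cF^Y_t]=\bbE^{\bbQ}[Z_TM_T|\cF^Y_t]/\bbE^{\bbQ}[M_t|\cF^Y_t]$ directly under $\bbQ$ and then invokes the same two results.
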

\begin{proof} It is well-known that (see, e.g., Lemma 3.1 in \cite{ejy})
\[
\bbE[\chf_{[\tau>T]}|\cG_t]=\chf_{[\tau>t]}\frac{\bbE[\chf_{[\tau>T]}|\cF^Y_t]}{Z_t}=\chf_{[\tau>t]}\frac{\bbE[Z_T|\cF^Y_t]}{Z_t}.
\]
Since
\[
\bbE[Z_T|\cF^Y_t]=\frac{\bbE^{\bbQ}[Z_TM_T|\cF^Y_t]}{\bbE^{\bbQ}[M_t|\cF^Y_t]},
\]
the claim follows from the representation in Corollary \ref{c:Zrep}
and Lemma \ref{l:RN}.
\end{proof}
\begin{remark} Similarly, one can get the following formula for any $F \in
L^1(\cF^Y_T, \bbP)$:
\[
\chf_{[\tau>t]}\bbE[F\chf_{[\tau>T]}|\cG_t]=\chf_{[\tau>t]}\bbE^{\bbQ}\left[F\exp\left(-\int_t^T\lambda_s\,ds\right)\exp\left(\int_t^T \vartheta_s
  dY_s-\frac{1}{2}\int_t^T \vartheta_s^2\,ds\right)\bigg|\cF^Y_t\right].
\]
\end{remark}
The above corollary can be viewed as an alternative to the formula in
(\ref{e:dss}). One advantage is that it does not require a computation
of a jump term, in addition to  the conditional expectation being taken with respect
to arguably a simpler filtration, $\cF^Y$. The price to pay in return
is that the computation is made under a different, but equivalent,
probability measure and $S$ is not equal to the conditional
expectation of $\exp\left(-\int_t^T\lambda_s\,ds\right)$ but that of
its multiplication by a strictly positive deflator,
$\kappa_T/\kappa_t$. Observe that $\kappa$ is a strictly positive
$(\bbQ,\cF^Y)$-local martingale. In case $\kappa$ is a true martingale, we can
make another change of probability measure and obtain the following
result, which is a version of Proposition 4.3 in \cite{cn}.
\begin{corollary} \label{c:cn} Suppose that $(\kappa_t)_{t \in [0,T]}$ is a $(\bbQ,
  \cF^Y)$-martingale and define $\tilde{\bbQ}$ on $\cF^Y_T$ by setting
  $\frac{d \tilde{\bbQ}}{d\bbQ}=\kappa_T$ on $\cF^Y_T$. Then, under
  Assumption \ref{a:a} and \ref{a:b},
\[
S_t=\chf_{[\tau>t]}\bbE^{\tilde{\bbQ}}\left[\exp\left(-\int_t^T\lambda_s\,ds\right)\bigg|\cF^Y_t\right].
\]
\end{corollary}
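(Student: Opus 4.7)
The plan is to recognize that the corollary is essentially a one-line change of measure applied to the formula from Corollary \ref{c:dbp}. The key algebraic identity we need is that the exponential deflator appearing in Corollary \ref{c:dbp} is precisely $\kappa_T/\kappa_t$, where $\kappa$ is the process from Corollary \ref{c:Zrep}.

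First, I would recall from Corollary \ref{c:Zrep} that
\[
\kappa_t=\exp\left(\int_0^t \vartheta_s\,dY_s-\frac{1}{2}\int_0^t \vartheta_s^2\,ds\right),
\]
with $\vartheta_s=\bbE[\chf_{[\tau>s]}b(s,X_s)|\cF^Y_s]/Z_s$ as in Corollary \ref{c:dbp}. Since $\kappa$ is strictly positive, we obtain
\[
\exp\left(\int_t^T \vartheta_s\,dY_s-\frac{1}{2}\int_t^T \vartheta_s^2\,ds\right)=\frac{\kappa_T}{\kappa_t}.
\]
Substituting this identification into the formula of Corollary \ref{c:dbp} gives
\[
S_t=\chf_{[\tau>t]}\,\bbE^{\bbQ}\!\left[\exp\!\left(-\int_t^T\lambda_s\,ds\right)\frac{\kappa_T}{\kappa_t}\,\bigg|\,\cF^Y_t\right].
\]

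Next, I would invoke the Bayes rule for conditional expectations under an equivalent change of measure. Since $d\tilde{\bbQ}/d\bbQ=\kappa_T$ on $\cF^Y_T$ and $\kappa$ is by assumption a true $(\bbQ,\cF^Y)$-martingale, for any $\cF^Y_T$-measurable integrable random variable $F$ we have
\[
\bbE^{\tilde{\bbQ}}[F\,|\,\cF^Y_t]=\frac{\bbE^{\bbQ}[F\kappa_T\,|\,\cF^Y_t]}{\bbE^{\bbQ}[\kappa_T\,|\,\cF^Y_t]}=\frac{1}{\kappa_t}\bbE^{\bbQ}[F\kappa_T\,|\,\cF^Y_t].
\]
Applying this with $F=\exp(-\int_t^T\lambda_s\,ds)$, which is bounded by $1$ and $\cF^Y_T$-measurable because $\lambda$ is $\cF^Y$-adapted (by Corollary \ref{c:lambda}), yields the claimed identity.

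There is no real obstacle here beyond being careful about measurability and integrability: the integrand $\exp(-\int_t^T\lambda_s\,ds)\kappa_T/\kappa_t$ is nonnegative and its $\bbQ$-conditional expectation on $[\tau>t]$ equals at most $1$ (a crude bound via $\bbE^{\bbQ}[\kappa_T|\cF^Y_t]=\kappa_t$), so all manipulations are justified. The martingale hypothesis on $\kappa$ is used in precisely one place, namely in writing $\bbE^{\bbQ}[\kappa_T|\cF^Y_t]=\kappa_t$, which is exactly what enables the passage from $\bbQ$ to $\tilde{\bbQ}$.
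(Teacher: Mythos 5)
Your proposal is correct and follows exactly the route the paper intends: it combines Corollary \ref{c:dbp} with the identification of the deflator as $\kappa_T/\kappa_t$ and the Bayes rule for the change of measure $\frac{d\tilde{\bbQ}}{d\bbQ}=\kappa_T$, using the martingale property of $\kappa$ precisely where the paper's discussion preceding the corollary indicates it is needed. The measurability and integrability remarks (in particular $\cF^Y$-adaptedness of $\lambda$ from Corollary \ref{c:lambda}) are the right justifications, so nothing is missing.
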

Note that $\bbP\sim \tilde{\bbQ}$, too. As observed by \cite{cn} the above formula is in the spirit of the
pricing formula of \cite{dhg}, who have obtained a pricing formula as
an expectation of  $\exp\left(-\int_t^T\lambda_s\,ds\right)$ but with respect to a
probability measure which is only absolutely continuous with respect
to $\bbP$. We refer the reader to the example in $\cite{cn}$ that
illustrates the difficulty with computing that expectation.
\begin{remark} A sufficient condition for $\kappa$ being a
  $(\bbQ, \cF^Y)$-martingale is the boundedness of $b$. Indeed, under this
  assumption
\[
\frac{\left|\bbE\left[\chf_{[\tau>s]}b(s,X_s)|F^Y_s\right]\right|}{Z_s}\leq
\frac{\bbE\left[\chf_{[\tau>s]}|b(s,X_s)||F^Y_s\right]}{Z_s}\leq
K\frac{\bbE\left[\chf_{[\tau>s]}|F^Y_s\right]}{Z_s}=K,
\]
where $K$ is an upper bound on $|b|$.
\end{remark}
\begin{remark} The formulae given above, in particular the
  expression for $\lambda$,  often contains conditional
  expectations of the form $\bbE[\chf_{[\tau>t]}F(X_t)|\cF^Y_t]$ where
  $F$ is a smooth function vanishing at $0$. In general, it is not
  possible to compute such expectations since it is not possible to
  solve for the conditional distribution of $X$ analytically. However,
  there are certain numerical methods that can be used to calculate
  these values. If we let
\[
\rho_t F:=\bbE^{\bbQ}[M_t F(X_{t\wedge \tau})|\cF^Y_t],
\]
then it follows that
\[
\bbE[\chf_{[\tau>t]}F(X_t)|\cF^Y_t]=\frac{\rho_t F}{\rho_t \chf},
\]
where $\chf$ is the constant function that takes the value $1$,
whenever $F$ is a smooth function vanishing at $0$. Since the
infinitesimal generator of $X$ and $X^{\tau}$ are the same, the standard arguments from nonlinear
  filtering yield the {\em Zakai equation}
\be \label{e:zaknum}
\rho_t F=\rho_0 F +\int_0^t \rho_s \mathcal{A} F \, ds +\int_0^t \rho_s b F\,
dY_s.
\ee
where
\[
\mathcal{A}=a(x)\frac{d}{dx}+\frac{1}{2}\frac{d^2}{dx^2}.
\]
Numerical solution of (\ref{e:zaknum}) is beyond the scope of this
paper. However, the {\em splitting-up} and {\em particle} methods
which have been studied extensively in the literature can be applied
to this setting to solve  (\ref{e:zaknum}) numerically. The reader can
find a lengthy discussion of these methods in Chapter 8 and 9 of
\cite{bc}  and the references therein.
\end{remark}
Observe that although one can find the price of a defaultable asset by
the formulas suggested above, they do not give immediately the
$\cG$-semimartingale decomposition of the price process. This is going
to be the
subject of the next section where we discuss the canonical
decomposition of $\cG$-optional projections of a class of
$\cH$-semimartingales. In particular we obtain the $\cG$-canonical
decomposition of $S$ in (\ref{e:condprob}).
\section{Nonlinear filtering equations for partially observable processes}
In this section we will investigate the `optimal filters' of $\cH$-adapted \cadlag processes when the available information is generated by the processes $Y$ and $D$. As opposed to the previous section, we will restrict our attention to a finite horizon $T$. Recall from the previous section that $D_t=\chf_{[\tau>t]}$, and we now set $\cG=(\cG_t)_{t \in [0,T]}$ (resp.~$\cF^Y=(\cF^Y_t)_{t \in [0,T]})$
to be  the filtration generated by $(D_t)_{t \in [0,T]}$ and $(Y_t)_{t
  \in [0,T]}$ (resp.~$(Y_t)_{t \in [0,T]}$ only), and augmented with
the $\bbP$-null sets.  Let $\bbQ\sim \bbP_T$ be a
probability measure on $(\Om, \cH_T)$ defined by the Radon-Nikodym
density $\frac{d\bbP_T}{d\bbQ}=M_T$, where $\bbP_T$ is the restriction of $\bbP$ to $\cH_T$ and $M$ is as in (\ref{e:M}). It follows that  $(Y_t)_{t \in
    [0,T]}$ is a $\bbQ$-Brownian motion independent of $(X_t)_{t \in
    [0,T]}$.  This  in particular implies that the natural filtration
  of $Y$ is right continuous when augmented with $\bbP$-null sets and, thus, all $(\bbP,\cG)$ (resp.~$(\bbP, \cF^Y)$) martingales have  right continuous versions, which we will use henceforth.

Note that  in view of Theorem \ref{t:AC} and its Corollary \ref{c:lambda} from
Section \ref{s:model}
\be \label{d:L}
L_t:=D_t -1 +\int_0^{t \wedge \tau}\lambda_s\,ds
\ee
defines a $(\bbP,\cG)$-martingale with a single jump at $\tau$ of
size $-1$. {\em In the rest of this section, we will assume that
  Assumption \ref{a:a} and \ref{a:b}, which yield in
  particular Theorem \ref{t:AC} and its Corollary \ref{c:lambda}, are in force without an explicit mention.}

We will obtain the filtering equations via an {\em innovations
  approach} (see Kallianpur \cite{Kal} or Liptser \& Shiryaev
\cite{ls} for the background). To do this we need to obtain a
martingale representation result for the square integrable $(\bbP,\cG)$-martingales. We will soon see that all such martingales can be
written as a stochastic integral with respect to some $\cG$-Brownian
motion and $L$.  The following is a well-known result in Filtering Theory.
\begin{proposition} Let
\[
\beta_t=Y_t -\int_0^t \bbE[b(s,X_s)|\cG_s]\, ds, \qquad t \in [0,T].
\]
Then, $\beta$ is a $(\cG, \bbP)$-Brownian motion.
\end{proposition}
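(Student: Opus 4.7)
The plan is to apply L\'evy's characterization of Brownian motion: it suffices to show that $\beta$ is a continuous $(\bbP,\cG)$-local martingale with quadratic variation $\langle\beta\rangle_t = t$.

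First I would verify the easy structural properties. Continuity of $\beta$ follows since $Y$ is continuous and $t \mapsto \int_0^t \bbE[b(s,X_s)|\cG_s]\,ds$ is absolutely continuous. Adaptedness to $\cG$ is clear because $Y$ is $\cF^Y \subseteq \cG$-adapted and the integrand $\bbE[b(s,X_s)|\cG_s]$ is $\cG_s$-measurable. To see that this integrand is well-defined and integrable, I would invoke Assumption \ref{a:b} together with Remark \ref{r:2ndmomentX}: the bound $|b(s,X_s)| \leq K_b(T)|X_s|$ and $\sup_{s\leq T}\bbE X_s^2 < \infty$ give $b(\cdot,X_\cdot) \in L^2([0,T]\times\Om)$, so $\bbE[b(s,X_s)|\cG_s]$ exists and Fubini applies.

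For the martingale property, I would rewrite
\[
\beta_t = B_t + \int_0^t \bigl(b(s,X_s) - \bbE[b(s,X_s)|\cG_s]\bigr)\,ds,
\]
and check that for any bounded $\cG_s$-measurable $F$ and $s < t$, $\bbE[(\beta_t-\beta_s)F] = 0$. The key observation is that $\cG_s \subseteq \cH_s$: indeed $Y$ is $\cH$-adapted and $\tau$ is an $\cH$-stopping time, so both generators of $\cG_s$ lie in $\cH_s$. Since $B$ is an $\cH$-Brownian motion, this yields $\bbE[(B_t-B_s)F] = \bbE[F\,\bbE[B_t-B_s|\cH_s]] = 0$. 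For the drift part, Fubini and the tower property (using $F \in \cG_s \subseteq \cG_u$ for $u\geq s$) give
\[
\bbE\left[\int_s^t \bbE[b(u,X_u)|\cG_u]\,du\cdot F\right] = \int_s^t \bbE[F\,b(u,X_u)]\,du = \bbE\left[F\int_s^t b(u,X_u)\,du\right],
\]
which exactly cancels the $\int_s^t b(u,X_u)\,du$ piece coming from $Y_t - Y_s$.

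Finally, since $\beta$ differs from $Y$ (and hence from $B$) only by a continuous finite variation process, its quadratic variation equals $\langle B\rangle_t = t$. L\'evy's theorem then delivers that $\beta$ is a $(\bbP,\cG)$-Brownian motion. I do not anticipate a serious obstacle here; the only subtlety worth pausing on is the inclusion $\cG_s \subseteq \cH_s$, which is what allows one to transfer the $\cH$-martingale property of $B$ to the desired conditional expectation under $\cG$. Without this inclusion the usual innovation argument would fail, and one would need a separate projection argument.
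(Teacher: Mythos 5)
Your proof is correct and is essentially the standard innovations-theorem argument (L\'evy characterization plus the inclusion $\cG_s \subseteq \cH_s$ to keep $B$ a martingale under conditioning on $\cG_s$); the paper itself states the proposition without proof as a well-known fact from filtering theory, citing Kallianpur and Liptser--Shiryaev, where precisely this argument is carried out. The only detail worth making explicit is that one works with an optional (or progressively measurable) version of $(\bbE[b(s,X_s)|\cG_s])_{s\in[0,T]}$ so that the time integral is well defined and $\cG$-adapted, a convention the paper fixes once and for all in its discussion of optional projections.
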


We will next obtain a stochastic integral representation for the martingales in $\mathscr{M}$, where
\be \label{e:sM}
\mathscr{M}=\{U: U \mbox{ is a square integrable $(\bbP, \cG)$-martingale}\}.
 \ee

In credit risk models it is often assumed, in order to simplify the
computations, that the following assumption, called {\bf H}-Hypothesis, holds:
$$
\mbox{Every $(\bbP,\cF^Y)$-martingale is a
$(\bbP,\cG)$-martingale}. \leqno(\mathbf{H})
$$
This assumption in particular implies a martingale
representation property for square integrable $\cG$-martingales, see
\cite{kus}. The following is a well-known result taken from \cite{cjn}.
\begin{theorem} \label{t:DM} Every $(\bbP,\cF^Y)$-martingale is a
$(\bbP, \cG)$-martingale if and only if
\[
\bbP(\tau \leq s |\cF^Y_t)=\bbP(\tau \leq s |\cF_{T}^Y),
\]
for every $s \leq t \leq T$.
\end{theorem}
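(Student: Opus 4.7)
The plan is to treat the two implications separately, relying in both on the family of bounded martingales
\[
M^{s}_{u}:=\bbE[\chf_{[\tau\leq s]}|\cF^{Y}_{u}], \qquad u\in[0,T],\ s\in[0,T],
\]
and on the key observation that for $u\geq s$ the random variable $\chf_{[\tau\leq s]}$ is $\cG_{u}$-measurable, since $D_{s}=\chf_{[\tau>s]}$ is $\cG_{s}$-measurable by the very definition of $\cG$.

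For the direction ($\Rightarrow$), I fix $s\leq t\leq T$ and use (H) to conclude that $M^{s}$ is also a $(\bbP,\cG)$-martingale, so $M^{s}_{t}=\bbE[M^{s}_{T}|\cG_{t}]$. Multiplying by the $\cG_{t}$-measurable random variable $\chf_{[\tau\leq s]}$ (valid because $t\geq s$) and taking expectations gives
\[
\bbE[\chf_{[\tau\leq s]}M^{s}_{t}]=\bbE[\chf_{[\tau\leq s]}M^{s}_{T}].
\]
Since $M^{s}_{t}$ is $\cF^{Y}_{t}$-measurable and $M^{s}_{T}$ is $\cF^{Y}_{T}$-measurable, pulling the indicator inside the respective conditional expectations reduces this identity to $\bbE[(M^{s}_{t})^{2}]=\bbE[(M^{s}_{T})^{2}]$. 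The $L^{2}$-norm of a square-integrable martingale is non-decreasing in time, and equality at two times forces $M^{s}_{t}=M^{s}_{T}$ a.s., which is exactly $\bbP(\tau\leq s|\cF^{Y}_{t})=\bbP(\tau\leq s|\cF^{Y}_{T})$.

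For the direction ($\Leftarrow$), the strategy is to upgrade the hypothesis to the conditional independence of $\cF^{Y}_{T}$ and $\cG_{t}$ given $\cF^{Y}_{t}$. Since $\cG_{t}=\cF^{Y}_{t}\vee\sigma(\chf_{[\tau\leq s]}:s\leq t)$, a $\pi$-system / monotone class argument reduces this to checking, for every $s\leq t$ and every bounded $\cF^{Y}_{T}$-measurable $F$, that
\[
\bbE[\chf_{[\tau\leq s]}F|\cF^{Y}_{t}]=\bbE[\chf_{[\tau\leq s]}|\cF^{Y}_{t}]\bbE[F|\cF^{Y}_{t}].
\]
The tower property rewrites the left-hand side as $\bbE[F\,\bbE[\chf_{[\tau\leq s]}|\cF^{Y}_{T}]|\cF^{Y}_{t}]$, and by the standing hypothesis $\bbE[\chf_{[\tau\leq s]}|\cF^{Y}_{T}]=\bbE[\chf_{[\tau\leq s]}|\cF^{Y}_{t}]$ is $\cF^{Y}_{t}$-measurable and can be pulled outside, which is the desired factorisation. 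The resulting conditional independence is equivalent to the identity $\bbE[N|\cG_{t}]=\bbE[N|\cF^{Y}_{t}]$ for every bounded $\cF^{Y}_{T}$-measurable $N$. Applied to $N=M_{T}$ for a bounded $(\bbP,\cF^{Y})$-martingale $M$, this gives $\bbE[M_{T}|\cG_{t}]=M_{t}$, and combining with the tower property over $\cG$ yields $\bbE[M_{u}|\cG_{t}]=M_{t}$ for all $t\leq u\leq T$; hence $M$ is a $(\bbP,\cG)$-martingale. The extension from bounded to general $(\bbP,\cF^{Y})$-martingales is a standard truncation-and-dominated-convergence argument.

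The step I expect to require the most care is the monotone-class passage in the ($\Leftarrow$) direction, where one has to promote an identity that a priori involves only the single indicators $\chf_{[\tau\leq s]}$ into full conditional independence of the $\sigma$-algebras $\cG_{t}$ and $\cF^{Y}_{T}$ given $\cF^{Y}_{t}$; the argument rests on the fact that $\{\tau\leq s\}$ for $s\leq t$, together with $\cF^{Y}_{t}$, generate $\cG_{t}$ and form a $\pi$-system once intersected with $\cF^{Y}_{t}$-sets.
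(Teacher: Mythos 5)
Your proof is correct. Note, however, that the paper itself offers no argument to compare against: Theorem \ref{t:DM} is quoted as a known result and attributed to \cite{cjn}, so your write-up is a self-contained proof of a cited fact rather than an alternative to anything in the text. Your ($\Leftarrow$) direction is the classical Br\'emaud--Yor route: upgrade the hypothesis to conditional independence of $\cG_t$ and $\cF^Y_T$ given $\cF^Y_t$ via a $\pi$-system/monotone-class argument (using that $\{\tau\leq s\}$, $s\leq t$, form a $\pi$-system and, together with $\cF^Y_t$, generate $\cG_t$ up to null sets), deduce $\bbE[N|\cG_t]=\bbE[N|\cF^Y_t]$ for bounded $\cF^Y_T$-measurable $N$, and close a general $\cF^Y$-martingale on $[0,T]$ by its terminal value with a truncation/$L^1$-contraction passage. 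Your ($\Rightarrow$) direction, which applies the immersion property to the bounded martingale $M^s_u=\bbE[\chf_{[\tau\leq s]}|\cF^Y_u]$, multiplies by the $\cG_t$-measurable indicator $\chf_{[\tau\leq s]}$, and then forces $M^s_t=M^s_T$ from $\bbE[(M^s_t)^2]=\bbE[(M^s_T)^2]$ via orthogonality of martingale increments, is a slightly slicker variant of the usual computation and is valid as written. The only points worth flagging, both harmless, are that the augmentation of $\cG$ by $\bbP$-null sets does not affect the monotone-class step, and that the final truncation step is just $L^1$-continuity of conditional expectation applied to $M_T$.
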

 It is not difficult to see that  {\bf H}-Hypothesis is not
 satisfied in general in our setting since $Z$ has a non-zero
 martingale part in its canonical decomposition (see Theorem 3.3 in
 \cite{cjn}).  Nevertheless, we will  have a predictable
 representation result for the martingales in $\cM$ in the absence of
 this hypothesis in Proposition \ref{p:MRP}. Before the statement and
 a short
 proof of this result, we will prove a  proposition which will show
 that the {\bf H}-Hypothesis  is
 satisfied (locally) under an equivalent probability measure as an
 aside. To this end, let's introduce the
 positive supermartingale
\[
N_t=1- \int_0^t N_s \bbE[b(s,X_s)|\cG_s]\,d\beta_s, \qquad t \in[0,T].
\]
Let $R_n:=\inf\{t>0:N_t>n\mbox{ or } N_t<\frac{1}{n}\}$ with the convention that
$\inf\emptyset=T$. Note that since $N$ is continuous, and strictly
positive  due to $\int_0^T \bbE[b^2(s,X_s)]\,ds<\infty$, $R_n\uar
T, \bbP$-a.s.. Associated to this stopping time, let $\cF^n$ be the
filtration generated by $Y^{R_n}$, augmented with the $\bbP$-null sets,
and $\cG^n$ be the smallest filtration containing $\cF^n$ with respect
to which $\tau$ is a stopping time.
\begin{proposition} \label{p:HHyp} Let $\bbP^{n}$ be the probability measure on $(\Om, \cG_T)$ defined by
\[
\frac{d\bbP^{n}}{d\bbP}=N_{R_n}.
\]
Then, every $(\bbP^{n}, \cF^n)$-martingale  is a $(\bbP^{n}, \cG^n)$-martingale.
\end{proposition}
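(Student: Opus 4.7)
The plan is to apply Girsanov's theorem to reduce the assertion to the classical predictable martingale representation property for the natural filtration of a continuous martingale. First I would verify that $\bbP^n$ is a genuine probability measure: because $R_n$ localizes $N$ to the interval $[1/n,n]$, the stopped density $N^{R_n}$ is a bounded, hence uniformly integrable, $(\bbP,\cG)$-martingale, so $\bbP^n\sim\bbP$ on $\cG_T$. Since
\[
dN_t=-N_t\bbE[b(t,X_t)|\cG_t]\,d\beta_t,
\]
Girsanov's theorem on the large filtration $\cG$ then shows that under $\bbP^n$ the process
\[
Y_{t\wedge R_n}\;=\;\beta_{t\wedge R_n}+\int_0^{t\wedge R_n}\bbE[b(s,X_s)|\cG_s]\,ds
\]
is a continuous $(\bbP^n,\cG)$-local martingale with quadratic variation $t\wedge R_n$.

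Next I would shrink the filtration. Both $\cG^n$ and $\cF^n$ are contained in $\cG$, make $Y^{R_n}$ adapted, and are augmented with the $\bbP$-null sets (equivalently $\bbP^n$-null sets, since $\bbP^n\sim\bbP$); a routine tower-property argument then shows that $Y^{R_n}$ remains a continuous local martingale under both $(\bbP^n,\cG^n)$ and $(\bbP^n,\cF^n)$, with pathwise quadratic variation still $t\wedge R_n$. By construction $\cF^n$ is the augmented natural filtration of $Y^{R_n}$, so the classical predictable representation theorem for the natural filtration of a continuous local martingale (obtained via the Dambis--Dubins--Schwarz time change; see Chap.~V of \cite{RY}) tells us every $(\bbP^n,\cF^n)$-square-integrable martingale $U$ admits
\[
U_t=U_0+\int_0^t H_s\,dY^{R_n}_s
\]
for some $\cF^n$-predictable $H$. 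Since $Y^{R_n}$ is also a $(\bbP^n,\cG^n)$-local martingale and $H$ is a fortiori $\cG^n$-predictable, the stochastic integral remains a $(\bbP^n,\cG^n)$-(local) martingale; square-integrability promotes it to a true martingale, and a standard localization argument extends the conclusion to all $(\bbP^n,\cF^n)$-martingales, which is precisely the \textbf{H}-hypothesis under $\bbP^n$ between $\cF^n$ and $\cG^n$.

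The main delicate point will be this representation step. One must apply the predictable representation property in a setting where the driving process $Y^{R_n}$ is not a genuine Brownian motion but only one stopped at $R_n$, a $\cG$-stopping time that need not be an $\cF^n$-stopping time. The Dambis--Dubins--Schwarz time change sidesteps this by turning $Y^{R_n}$ into a standard Brownian motion on a time-changed probability space, where the ordinary Brownian representation theorem applies and can then be translated back to $\cF^n$. Once this is secured, everything else is routine bookkeeping about stability of stochastic integrals under filtration enlargement and about equivalence of $\bbP$- and $\bbP^n$-null sets.
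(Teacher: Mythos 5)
There is a genuine gap at the crucial representation step. The ``classical predictable representation theorem'' you invoke holds for a Brownian motion in \emph{its own} filtration; it is not a theorem for the natural filtration of a stopped Brownian motion, and it is in fact false in general. Take a Brownian motion stopped at an independent random time $S$ taking two values: $S$ is a stopping time of the (augmented, right-continuous) natural filtration of the stopped path, yet the martingale $\bbP(S=s_1|\cdot)$ jumps at $s_1$ and so cannot be an integral against the continuous stopped process. The same obstruction threatens your setting: $R_n$ is the exit time of $N$, and $N$ is driven by $\beta$ with integrand $\bbE[b(s,X_s)|\cG_s]$, so it carries $\cG$-randomness (in particular information related to the default indicator) that is not generated by $Y$. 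Consequently $\cF^n$ is, in effect, an observation filtration progressively enlarged by the random time at which the path freezes, and such filtrations generically support purely discontinuous martingales (e.g.\ the compensated indicator of $R_n$ when $R_n$ is totally inaccessible in $\cF^n$), which cannot be written as $\int H\,dY^{R_n}$. The Dambis--Dubins--Schwarz time change does not repair this: it identifies $Y^{R_n}$ with a time-changed Brownian motion up to $R_n$ but says nothing about $\cF^n$-martingales built from the observation of when the quadratic variation stops increasing. (Incidentally, the worry you do flag is not the real issue: $R_n$ \emph{is} an $\cF^n$-stopping time, precisely because $[Y^{R_n},Y^{R_n}]_t=t\wedge R_n$ is computable from the observed path.) So the step ``every square-integrable $(\bbP^n,\cF^n)$-martingale is an integral of $Y^{R_n}$'' is unproved and most likely false, and with it the whole route collapses; note that the proposition itself does not require any such representation.

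The paper's proof avoids representation theorems altogether. After the Girsanov step (which you do carry out correctly), it uses the filtering theorems of Liptser and Shiryaev: since $Y^{R_n}$ is driftless under $\bbP^n$, since $\langle L,Y\rangle\equiv 0$ keeps $D=1+L-\Lambda$ unchanged under the measure change, and since $\lambda$ is $\cF^Y$-adapted by Corollary \ref{c:lambda}, Theorem 8.1 of \cite{ls} gives
\[
Z^n_t=1-\int_0^t\lambda_s Z^n_s\,ds=\exp\left(-\int_0^t\lambda_s\,ds\right)
\]
on $[t<R_n]$, and Theorem 8.4 of \cite{ls} upgrades this to $\bbP^n[\tau>s|\cF^n_t]=Z^n_s$ for all $s\leq t\leq T$. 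The conclusion then follows from the characterization of the {\bf H}-hypothesis in Theorem \ref{t:DM}: the conditional law of $\tau$ given $\cF^n_t$ does not depend on $t\geq s$, which is exactly immersion of $\cF^n$ in $\cG^n$ under $\bbP^n$. If you want to salvage your strategy, you would have to verify the immersion criterion (or compute the $\cF^n$-conditional law of $\tau$) directly, which is essentially the computation the paper performs.
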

\begin{proof} Observe that $[Y^{R_n},Y^{R_n}]_t=t \wedge R_n$ so that
  $R_n$ is a $\cF^n$-stopping time. Moreover, $Y^{R_n}$ becomes a Brownian motion stopped at $R^n$ under $\bbP^{n}$ while the canonical decomposition of $D$ remains unchanged, i.e.
\[
D=1+ L-\Lambda,
\]
where $L$, as defined by (\ref{d:L}), is still a martingale under
$\bbP^{n}$ and $\Lambda$ is the continuous and increasing process
defined in Corollary \ref{c:lambda}. Let $Z^{n}$ denote the
$\cF^n$-optional projection of $D$ under $\bbP^{n}$. Then, it
follows from Theorem 8.1 in \cite{ls} that
\[
Z^{n}_t =1-\int_0^t \bbE^{n}[\chf_{[\tau>s]}\lambda_s|\cF^n_s]\,ds,
\]
where $\bbE^{n}$ is expectation with respect to $\bbP^{n}$ since
$Y^{R_n}$ has no drift, and $\langle L, Y\rangle \equiv 0$. Also observe that on
$t<R_n$,
\[
Z^{n}_t =1-\int_0^t
\lambda_s\bbE^{n}[\chf_{[\tau>s]}|\cF^n_s]\,ds=1-\int_0^t \lambda_s
Z^n_s\,ds,
\]
since $\lambda$ is adapted to $\cF^Y$ in view of Corollary \ref{c:lambda}. As
seen, $Z^n$ is a continuous and decreasing process, and on
$[t<R_n]$ it is given by
\be  \label{e:Z*}
Z^n_t=\exp\left(-\int_0^t \lambda_s\, ds\right).
\ee
Moreover, it follows from Theorem 8.4 in \cite{ls} that, for $s \leq t \leq T$,
\[
\bbP^{n}[\tau>s|\cF_t]=Z^{n}_s.
\]
The result now follows from Theorem \ref{t:DM}.
\end{proof}\\
The next is the  integral representation theorem that we are after.
 \begin{proposition} \label{p:MRP} Let $\mathscr{M}$ be as in (\ref{e:sM}). For any $U \in \mathscr{M}$ there exists a pair of  $\cG$-predictable  process, $(\Phi_t)_{t \in [0,T]}$ and $(\zeta_t)_{t \in [0,T]}$ such that
\[
U_t= U_0 +\int_0^t\Phi_s\, d\beta_s +\int_0^t \zeta_s dL_s.
\]
\end{proposition}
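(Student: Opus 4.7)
The plan is to first decompose any $U \in \mathscr{M}$ via a Kunita--Watanabe projection as $U = U_0 + \int_0^\cdot \Phi\,d\beta + \int_0^\cdot \zeta\,dL + V$, where $\Phi$ and $\zeta$ are $\cG$-predictable processes of the required $L^2$-integrability obtained by projecting $U-U_0$ onto the stable subspaces generated by $\beta$ and $L$ respectively (these subspaces are themselves strongly orthogonal since $\beta$ is continuous while $L$ is purely discontinuous with a single totally inaccessible jump), and $V$ is a square-integrable $(\bbP,\cG)$-martingale with $V_0=0$ that is strongly orthogonal to both $\beta$ and $L$. It then suffices to show that $V \equiv 0$.

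To establish this, I would stop at $R_n$ and pass to $\bbP^n$ of Proposition \ref{p:HHyp}. Since $dN_t=-N_t\,\bbE[b(t,X_t)|\cG_t]\,d\beta_t$ and $\langle V,\beta\rangle\equiv 0$ by construction, we have $\langle V,N\rangle\equiv 0$, so Girsanov's theorem yields that $V^{R_n}$ remains a $(\bbP^n,\cG^n)$-local martingale with no drift correction; square-integrability follows from the boundedness of $N^{R_n}$ on $[0,R_n]$ built into the definition of $R_n$. Since Girsanov preserves quadratic variations and $Y-\beta=\int_0^\cdot\bbE[b(s,X_s)|\cG_s]\,ds$ is continuous of finite variation, $\langle V^{R_n},Y^{R_n}\rangle^{\bbP^n}=\langle V,\beta\rangle^{R_n}=0$ and $\langle V^{R_n},L^{R_n}\rangle^{\bbP^n}=\langle V,L\rangle^{R_n}=0$. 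Under $\bbP^n$, $Y^{R_n}$ is a Brownian motion stopped at $R_n$, $L^{R_n}$ is still a martingale (its continuous compensator $\Lambda^{R_n}$ is unaffected by a Girsanov transform against a continuous density), and the H-Hypothesis holds. Combining the classical Brownian representation in $\cF^n$ with Kusuoka's theorem (\cite{kus}) under H-Hypothesis yields that every square-integrable $(\bbP^n,\cG^n)$-martingale is a stochastic integral against $Y^{R_n}$ and $L^{R_n}$; applied to $V^{R_n}$, which is orthogonal to both, this forces $V^{R_n}\equiv V_0=0$. Letting $R_n \uar T$ gives $V \equiv 0$ on $[0,T]$, and the representation follows.

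The main obstacle is verifying that the Kusuoka-type result really delivers the strong form needed here, namely that the stable subspace generated by $Y^{R_n}$ and $L^{R_n}$ exhausts all square-integrable $(\bbP^n,\cG^n)$-martingales, so that the orthogonal complement is trivial. Secondary technical points are the square-integrability of $V^{R_n}$ under $\bbP^n$ (handled by the bounds defining $R_n$) and the consistency across $n$ of the predictable integrands obtained for different $R_n$, which follows from the uniqueness of such integrands up to indistinguishability on $[0,R_n]$.
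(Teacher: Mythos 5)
Your argument is correct in outline but follows a genuinely different route from the paper's. The paper's proof is essentially two lines: under the equivalent measure $\bbQ$ the observation $Y$ is a Brownian motion independent of $X$, hence of $\tau$, so the pair $(Y,D)$ visibly has the \emph{weak predictable representation property} for $(\bbQ,\cG)$-local martingales; Theorem 13.21 in Chap.~XIII of \cite{HWY} states that this weak representation property is invariant under an equivalent change of probability, so it holds under $\bbP$ as well, and since the $\cG$-martingale parts of $Y$ and $D$ under $\bbP$ are $\beta$ and $L$, the claimed representation follows with no orthogonal decomposition, no {\bf H}-hypothesis and no auxiliary measures. Your route instead performs a Kunita--Watanabe projection onto the stable subspace generated by $\beta$ and $L$ (these are indeed strongly orthogonal, since $L$ is of finite variation and purely discontinuous while $\beta$ is continuous, so $[\beta,L]\equiv 0$) and then kills the residual $V$ by passing to $\bbP^{n}$ of Proposition \ref{p:HHyp} and invoking a Kusuoka-type representation under the {\bf H}-hypothesis. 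The measure-change bookkeeping you sketch is sound: the density $N^{R_n}$ is continuous, so $V$, $[V,\beta]$ and $[V,L]$ all remain local martingales under $\bbP^{n}$, which is exactly the orthogonality you need there. What remains, and what the paper's argument never has to face, are three technical debts: (i) you must verify that $V^{R_n}$ is $\cG^n$-adapted (i.e.\ $\cG_{t\wedge R_n}\subset \cG^n_t$, a Galmarino-type fact) before any $(\bbP^{n},\cG^n)$-representation theorem can be applied to it; (ii) the predictable representation property of $\cF^n$, the filtration of the \emph{stopped} Brownian motion $Y^{R_n}$, with respect to $Y^{R_n}$ under $\bbP^{n}$ is not the classical Brownian statement and needs a short argument (for instance by appending an independent Brownian motion after $R_n$); and (iii) Kusuoka's theorem must be checked in this stopped reference filtration with the $\bbP^{n}$-intensity, which Proposition \ref{p:HHyp} shows is still $\lambda$ — this is the obstacle you yourself flag. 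All of these can be carried out, so your proof can be completed; but the weak-representation-plus-invariance argument of \cite{HWY} buys the same conclusion almost for free, whereas your approach has the minor virtue of making explicit the role of Proposition \ref{p:HHyp} and of exhibiting $\Phi$ and $\zeta$ as Kunita--Watanabe projections.
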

\begin{proof} It is clear that the vector semimartingale $(Y, D)$ has
  the {\em weak predictable representation property} in the sense of
  Definition 13.13 in Chap.~XIII of \cite{HWY} for $(\bbQ, \cG)$-local
  martingales. Then it follows from Theorem 13.21 in Chap.~XIII of
  \cite{HWY}  that it has the weak predictable representation property
  for $(\bbP, \cG)$-local
  martingales as well. This implies the claimed representation.
\end{proof}
We now return to solve the filtering problem when the observation is
via the processes $Y$ and $D$. Let's suppose that the unobserved
signal, $P=(P_t)_{t \in [0,T]}$ is a $(\bbP, \cH)$-semimartingale such that
\be \label{e:P}
P_t=P_0 + \int_0^t V_s \,ds + m_t,
\ee
where $m$ is a {\em continuous} $(\bbP, \cH)$-martingale and $V$ is a
measurable stochastic process adapted to $\cH$ such that, $\bbP$-a.s.,
\[
\int_0^t |V_s| \,ds < \infty,
\]
for every $t \geq 0$. The
solution of the filtering problem amounts to finding the
semimartingale decomposition of the $(\bbP, \cG)$-optional projection of $P$, which will be denoted with $\hat{P}$. We make the following
assumption on $P$.
\begin{assumption}\label{a:P} The semimartingale $P$ in (\ref{e:P})
  satisfies the following:
\begin{enumerate}
\item $\sup_{t \leq T} \bbE[P^2_t]< \infty$;
\item $\bbE\int_0^T V_s^2 ds<\infty$.
\item $m_t =\int_0^t \theta_s dB_s +n_t$ where $\theta$ is an $\cH$-predictable process and $n$ is a continuous
  $(\bbP, \cH)$-martingale strongly orthogonal to $B$.
\end{enumerate}
\end{assumption}

In equations of nonlinear filtering, see, e.g. Theorem 8.1 in
\cite{ls}, in order to obtain the filtering equations for the signal
of the form $\int_0^{\cdot} V_s\, ds + m$, where $m$ is a
$(\bbP, \cH)$-martingale, one needs the following useful fact, proof
of which is the same as that of Lemma 8.4 in \cite{ls}, hence is omitted.
\begin{proposition} \label{p:ls} For any measurable and $\cH$-adapted process $V$ with the property
\[
 \int_0^T \bbE[V^2_s]\,ds <\infty,
\]
the random process
\[
\left(\bbE\left[\int_0^t V_s\, ds\bigg|\cG_t\right]-\int_0^t \bbE\left[V_s|\cG_s\right]\, ds\right)_{t\in [0,T]}
\]
is a square integrable $(\bbP,\cG)$-martingale.
\end{proposition}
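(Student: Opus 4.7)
Denote the process in question by $M_t := \bbE[\int_0^t V_s\,ds\,|\,\cG_t] - \int_0^t \bbE[V_s|\cG_s]\,ds$. The plan is to establish $L^2$-integrability of each summand first, so that the conditional Fubini exchanges invoked below are legitimate, and then to verify the martingale identity $\bbE[M_t|\cG_s]=M_s$ for $0\leq s<t\leq T$ by the standard split-and-recombine argument.

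For square integrability, I would handle each summand separately. For the first, Jensen's inequality gives $\bigl(\bbE[\int_0^t V_u\,du|\cG_t]\bigr)^2 \leq \bbE[(\int_0^t V_u\,du)^2|\cG_t]$, and Cauchy--Schwarz in the time variable bounds the outer expectation by $T\int_0^T \bbE V_u^2\,du$, which is finite by hypothesis. For the second, Cauchy--Schwarz in time followed by Jensen on $(\bbE[V_u|\cG_u])^2\leq \bbE[V_u^2|\cG_u]$ yields the same bound. This also confirms that $M$ is $\cG$-adapted in $L^2$ (the integral term being a Lebesgue integral of an adapted process).

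For the martingale property, fix $s<t$ and split $\int_0^t V_u\,du = \int_0^s V_u\,du + \int_s^t V_u\,du$. Conditioning on $\cG_s$, the tower property reduces the first summand of $\bbE[M_t|\cG_s]$ to $\bbE[\int_0^s V_u\,du|\cG_s] + \bbE[\int_s^t V_u\,du|\cG_s]$, and a conditional Fubini exchange rewrites the latter as $\int_s^t \bbE[V_u|\cG_s]\,du$. Splitting the second summand of $M_t$ in the same way, the $\cG_s$-conditional expectation of its $[s,t]$ piece equals, by Fubini and the tower identity $\bbE[\bbE[V_u|\cG_u]|\cG_s]=\bbE[V_u|\cG_s]$ (valid for $u\geq s$), also $\int_s^t \bbE[V_u|\cG_s]\,du$. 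The two $[s,t]$-contributions cancel and one is left with $\bbE[M_t|\cG_s] = \bbE[\int_0^s V_u\,du|\cG_s] - \int_0^s \bbE[V_u|\cG_u]\,du = M_s$.

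The only genuinely delicate point is the justification of the conditional Fubini step. This needs joint measurability of $(u,\omega)\mapsto V_u(\omega)$ on $[0,T]\times\Omega$, which is exactly the meaning of the word ``measurable'' in the hypothesis and, combined with $\cH$-adaptedness, produces the required $\cH$-progressively measurable modification; together with $\int_s^t \bbE|V_u|\,du<\infty$, which is a one-line consequence of Cauchy--Schwarz and $\int_0^T\bbE V_u^2\,du<\infty$. Once these are in place, the argument is pure bookkeeping, as indicated by the reference to Lemma~8.4 of Liptser--Shiryaev.
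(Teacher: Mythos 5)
Your argument is correct and is essentially the proof the paper has in mind: the paper omits the proof precisely because it is the standard split-and-recombine argument of Lemma~8.4 in Liptser--Shiryaev, which is what you reproduce (square integrability via Jensen and Cauchy--Schwarz, martingale property via the tower property and conditional Fubini, with the measurable/adapted version of $\bbE[V_u|\cG_u]$ handled by taking the optional projection). Nothing further is needed.
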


The next theorem giving the semimartingale decomposition of $\hat{P}$
is the main result of this section.  In what follows we will write
$\bbE[P_t|\cF^Y_t,\tau=t]$ for the measurable function
$\bbE[P_t|\cF^Y_t,\tau]|_{\tau=t}$ (see Lemma \ref{l:pAd} in this
respect).

\begin{theorem} \label{t:mainfilter}  Let $P$ defined by (\ref{e:P}) satisfy Assumption
  \ref{a:P}. Then
\[
\hat{P}_t=\hat{P}_0+\int_0^t \hat{V}_s\, ds +\int_0^t\Phi_s\,
  d\beta_s+\int_0^t\zeta_s\,dL_s,
\]
where
\bean
\Phi_t&=&\hat{\theta}_t+\bbE[P_t
b(t,X_t)|\cG_t]-\hat{P}_t\bbE[b(t,X_t)|\cG_t],\mbox{ and}\\
\zeta_t&=&\hat{P}_{t-} -\bbE[P_t|\cF^Y_t,\tau=t],
\eean
for every $t \in [0,T]$. In particular, the process
$(\bbE[P_t|\cF^Y_t,\tau=t])_{t \in [0,T]}$ is $\cG$-predictable.
\end{theorem}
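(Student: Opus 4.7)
The plan follows the innovations method. First I would establish that
\[
U_t := \hat{P}_t - \hat{P}_0 - \int_0^t \hat{V}_s\,ds, \qquad t \in [0,T],
\]
is a square-integrable $(\bbP,\cG)$-martingale. Decomposing $P_t = P_0 + \int_0^t V_s\,ds + m_t$ and optionally projecting term by term onto $\cG$: $\bbE[P_0|\cG_t]$ is a closed $\cG$-martingale, $\bbE[m_t|\cG_t]$ is a $\cG$-martingale by the tower property (since $m$ is an $\cH$-martingale), and Proposition \ref{p:ls} handles the drift, giving a $\cG$-martingale after subtracting $\int_0^t \hat{V}_s\,ds$. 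Square-integrability follows from Assumption \ref{a:P} via Jensen. Applying the representation of Proposition \ref{p:MRP} to $U$ then yields $\cG$-predictable $\Phi, \zeta$ with $U_t = \int_0^t \Phi_s\,d\beta_s + \int_0^t \zeta_s\,dL_s$; it remains to identify the two integrands.

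To pin down $\Phi$, I would fix a bounded $\cG$-predictable $\alpha$, set $N_t = \int_0^t \alpha_s\,d\beta_s$, and compute $\bbE[P_T N_T]$ and $\bbE[\hat{P}_T N_T]$ separately by integration by parts, then exploit that these coincide since $N_T$ is $\cG_T$-measurable. On the $P$-side, the key input is that $\beta$ is an $\cH$-semimartingale with $\cH$-martingale part $B$, so that $[P,\beta]_t = [m,B]_t = \int_0^t\theta_s\,ds$ (the $n$-piece vanishing by Assumption \ref{a:P}(3)); tower-property simplifications inside the resulting integrals produce, as the coefficient of $\alpha_s$, the expression $\hat{\theta}_s + \bbE[P_s b(s,X_s)|\cG_s] - \hat{P}_s\bbE[b(s,X_s)|\cG_s]$. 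On the $\hat{P}$-side, integration by parts gives the coefficient $\Phi_s$ after using $[\hat{P},\beta]_t = \int_0^t \Phi_s\,ds$ (which in turn relies on $[L,\beta] \equiv 0$, since $\beta$ is continuous and $L$ is purely discontinuous with a continuous compensator). Since $\alpha$ ranges over a sufficiently rich class of bounded $\cG$-predictable processes, the two coefficients agree $dt \times d\bbP$-a.e., matching the claimed formula.

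To identify $\zeta$, I would use that $\beta$ is continuous, so the only jump of $\hat{P}$ comes from $L$, occurring at $\tau$ with $\Delta L_\tau = -1$; hence $\zeta_\tau = \hat{P}_{\tau-} - \hat{P}_\tau$. Because $\cG_\tau$ is generated (up to null sets) by $\cF^Y_\tau$ and $\tau$, one has $\hat{P}_\tau = \bbE[P_\tau|\cF^Y_\tau, \tau]$. Lemma \ref{l:pAd} supplies a jointly measurable function whose value at $(t,\omega)$ equals $\bbE[P_t|\cF^Y_t,\tau=t](\omega)$; setting $\zeta_t := \hat{P}_{t-} - \bbE[P_t|\cF^Y_t,\tau=t]$ thus produces a $\cG$-predictable process that delivers the correct jump at $\tau$, its behaviour on $[0,\tau)$ being immaterial since $L$ is constant there. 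The final in-particular assertion that $(\bbE[P_t|\cF^Y_t,\tau=t])_{t\in[0,T]}$ is $\cG$-predictable is then a direct byproduct of the same lemma.

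The main obstacle I expect is the identification of $\Phi$: exchanging $\cG$-conditioning with stochastic integration against $\cH$-semimartingales is not direct, which is why the test-function route via $\bbE[P_T N_T] = \bbE[\hat{P}_T N_T]$ seems cleaner than trying to project $\int_0^{\cdot} P_s\,d\beta_s$ directly onto $\cG$. The $\zeta$-step, while conceptually simple via the jump analysis, relies essentially on the joint-measurability content of Lemma \ref{l:pAd} to upgrade a family of pointwise a.s.-defined conditional expectations into an honest $\cG$-predictable process.
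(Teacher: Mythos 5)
Your identification of $\Phi$ is sound and is essentially the paper's own innovations argument in a different guise: the paper compares the two semimartingale expansions of $Y^n\hat{P}$ (with $Y$ stopped at $S_n$) and kills the resulting predictable finite-variation local martingale, while you test against $N=\int\alpha\,d\beta$ and equate $\bbE[P_TN_T]=\bbE[\hat{P}_TN_T]$; up to the localization needed to make the stochastic integrals true martingales, these are the same computation and give the same coefficient.

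The identification of $\zeta$, however, contains a genuine gap. You argue that matching the single jump, $\zeta_\tau=\hat{P}_{\tau-}-\hat{P}_\tau$, suffices because ``the behaviour of $\zeta$ on $[0,\tau)$ is immaterial since $L$ is constant there.'' But $L$ is \emph{not} constant on $[0,\tau)$: by (\ref{d:L}), $L_t=\int_0^t\lambda_s\,ds$ for $t<\tau$, so
\[
\int_0^t\zeta_s\,dL_s=-\zeta_\tau\chf_{[\tau\le t]}+\int_0^{t\wedge\tau}\zeta_s\lambda_s\,ds,
\]
and the integral depends on the whole trajectory of $\zeta$ on $[0,\tau]$. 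Equivalently, the integrand in the representation of Proposition \ref{p:MRP} is only determined up to $d\langle L,L\rangle\times d\bbP$-null sets, and $d\langle L,L\rangle_t=\chf_{[t\le\tau]}\lambda_t\,dt$ charges all of $[0,\tau]$, whereas the graph of $\tau$ is a null set for this measure; so pinning down $\zeta$ at the (totally inaccessible) time $\tau$ identifies it on a null set only, and two predictable candidates agreeing at $\tau$ but differing before $\tau$ produce genuinely different processes $\int\zeta\,dL$ (their difference, $\int_0^{\cdot\wedge\tau}(\zeta_s-\zeta'_s)\lambda_s\,ds$, is continuous and cannot be absorbed by the $\beta$-integral either). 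What rescues the idea is precisely the $\cG$-predictability of $\zeta$: from $\zeta_\tau=\hat P_{\tau-}-\hat P_\tau$ one must pass, via a dual predictable projection argument (for every bounded predictable $\nu$, $\bbE[\nu_\tau\zeta_\tau\chf_{[\tau\le T]}]=\bbE[\int_0^{T\wedge\tau}\nu_s\zeta_s\lambda_s\,ds]$), to an identification $d\Lambda\times d\bbP$-a.e. This is exactly what the paper does by invoking the filtering formula of Liptser--Shiryaev, which characterizes $\zeta_t=\hat P_{t-}-v_t$ with $v$ the \emph{unique} $\cG$-predictable process satisfying the duality relation (\ref{e:v}), and then verifying that the $\cF^Y$-predictable version of $\left(\bbE[P_t|\cF^Y_t,\tau=t]\right)_{t\in[0,T]}$ supplied by Lemma \ref{l:pAd} satisfies (\ref{e:v}). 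Your use of Lemma \ref{l:pAd} for measurability and for the equality $\hat P_\tau=\bbE[P_\tau|\cF^Y_\tau,\tau]$ is the right ingredient, but without the duality (or an equivalent uniqueness-of-predictable-projection step) the proof does not establish the formula for $\zeta$ on $[0,\tau)$, which is where it actually matters.
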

\begin{proof}
It  follows from Proposition \ref{p:ls} that
\[
\left(\bbE\left[\int_0^t V_s\, ds\bigg|\cG_t\right]-\int_0^t \bbE\left[V_s|\cG_s\right]\, ds\right)_{t\in [0,T]}
\]
is a square integrable $(\bbP,\cG)$-martingale. Thus,
\[
\left(\hat{P}_t-\int_0^t \hat{V}_s\, ds\right)_{t\in [0,T]}
\]
is a square integrable $(\bbP,\cG)$-martingale under Assumption \ref{e:P}, and, in
view of Proposition \ref{p:MRP}, there exist $\cG$-predictable processes,
$\Phi$ and $\eta$, such that
\[
\hat{P}_t=\hat{P}_0+\int_0^t \hat{V}_s\, ds+\int_0^t\Phi_s\,
  d\beta_s+\int_0^t\zeta_s\,dL_s.
\]
So, it remains to determine the processes $\Phi$ and $\zeta$. Let
$Y^n_t:=Y_{t \wedge S_n}$ where $S_n:=\inf\{t>0:|Y_t|>n\}.$ First
note that, using integration by parts formula,
\be \label{e:op1}
Y^n_t \hat{P}_t=\int_0^t \left\{Y^n_s \hat{V}_s+\chf_{[s\leq S_n]}(\Phi_s+\hat{P}_s\bbE[b(s,X_s)|\cG_s])\right\}\,
ds +n^1_t,
\ee
where $n^1$ is a $(\bbP,\cG)$-local martingale. We will next compute the
optional projection of $Y^n P$ by directly taking the projection of
\[
Y^n_t P_t=\int_0^t \left\{Y^n_s V_s+\chf_{[s\leq S_n]} P_s b(s,X_s)\right\} \,ds  + \int_0^t
Y^n_{s}\,dm_s +\int_0^t \chf_{[s\leq S_n]} P_s\, dB_s + [m,
B]_{t\wedge S_n}.
\]
Thus,
\be \label{e:op2}
Y^n_t \hat{P}_t= \int_0^t \left\{Y^n_s \hat{V}_s+\chf_{[s\leq S_n]}(\bbE[P_s
  b(s,X_s)|\cG_s]+\hat{\theta}_s)\right\} \,ds +n^2_t,
\ee
where  $n^2$ is a $(\bbP,\cG)$-local martingale.  Equating (\ref{e:op1}) to
(\ref{e:op2}) we get
 \[
\left(\int_0^{t\wedge S_n} \left\{\Phi_s+\hat{P}_s\bbE[b(s,X_s)|\cG_s]-\hat{\theta}_s-\bbE[P_s
  b(s,X_s)|\cG_s]\right\}\,ds\right)_{t \in [0,T]}
\]
is a local martingale, thus, it must vanish since it is also
predictable. We can in fact do similar calculations, after
moving the origin from $0$ to $r \in [0,T]$, to conclude that
 \[
\left(\int_r^{t\wedge S_n} \left\{\Phi_s+\hat{P}_s\bbE[b(s,X_s)|\cG_s]-\hat{\theta}_s-\bbE[P_s
  b(s,X_s)|\cG_s]\right\}\,ds\right)_{t \in [r,T]}
\] must vanish for every $r\geq 0$. Therefore, since $S_n \uar T, \bbP$-a.s.,
\[
\Phi_t=\hat{\theta}_t+\bbE[P_t
  b(t,X_t)|\cG_t]-\hat{P}_t\bbE[b(t,X_t)|\cG_t], \qquad t \in [0,T].
\]
Now, we return to determine $\zeta$.  However, the filtering formula 4.10.8 in \cite{lstm} yields
\[
\zeta_t=\hat{P}_{t-} - v_t,
\]
for some $\cG$-predictable process $v$, which is the unique $\cG$-predictable process satisfying
\be \label{e:v}
\bbE\left[\int_0^{T}\nu_t P_t dD_t\right]=\bbE\left[\int_0^{T}\nu_t v_t dD_t\right],
\ee
for any bounded  $\cG$-predictable  process $\nu$.  We will next show that $v=\left(\bbE[P_t|\cF^Y_t,\tau=t]\right)_{t \in [0,T]}$. Before showing that the candidate process satisfies (\ref{e:v}), let's first verify that it is $\cG$-predictable.

In view of Lemma \ref{l:pAd}, there exist appropriately measurable
functions, $f^1$ and $f^2$ such that the $(\bbQ,
\cF^{Y,\tau})$-optional projections\footnote{$\cF^{Y,\tau}$ is the smallest filtration satisfying
  the usual conditions and including $\cF^Y$ such that $\sigma(\tau)
  \subset \cF^{Y,\tau}_0$.} of $PM$ and $M$ are given by
$(f^1(\tau(\om),\om, t))_{t \in [0,T]}$ and $(f^2(\tau(\om),\om, t))_{t
  \in [0,T] }$, respectively. On the other hand, the Bayes' formula
yields for any $\cF^{Y,\tau}$-stopping time $S$,
\[
\bbE[P_S|\cF^Y_S, \tau]=\frac{\bbE^{\bbQ}[P_S  M_S
  |\cF^Y_S,\tau]}{\bbE^{\bbQ}[M_S|\cF^Y_S,\tau]}=\frac{f^1(\tau(\om),\om,S)}{f^2(\tau(\om),\om,S)}\,;
\]
i.e., $(\bbP, \cF^{Y,\tau})$-optional projection of $P$ is given by
$(f(\tau(\om),\om,t))_{t \in [0,T]}$, where
\[
f(u,\om, t):=\frac{f^1(u,\om,t)}{f^2(u,\om,t)},
\]
for $\om \in \Om$ and $u \geq 0,\, t\geq 0$. Note that $(f(\tau(\om),\om,t))_{t \in [0,T]}$ is $\cF^{Y,\tau}$-optional since $(f^i(\tau(\om),\om,t))_{t \in [0,T]}$ is $\cF^{Y,\tau}$-optional for $i=1,2$.

Moreover, since $f^i(t,\om,t)$ is  $\cF^Y_t$-measurable for $i=1,2$
by Lemma  \ref{l:pAd}, we see that $f(t, \om, t)$ is
$\cF^Y_t$-measurable  for each $t \geq 0$, as well.
Writing
$\bbE[P_t|\cF^Y_t,\tau=t]$ for $f(t,\om, t)$, one has that
$\left(\bbE[P_t|\cF^Y_t,\tau=t]\right)_{t \in [0,T]}$ is a measurable
and $\cF^Y$-adapted process. By the definition of optional
projections,  the $(\bbP,\cF^Y)$-optional projection of $\left(\bbE[P_t|\cF^Y_t,\tau=t]\right)_{t \in [0,T]}$, denoted with
$u$, satisfies $u_t=\bbE[P_t|\cF^Y_t,\tau=t]$
for every $t$. This implies that we can choose an $\cF^Y$-optional
version of $\left(\bbE[P_t|\cF^Y_t,\tau=t]\right)_{t \in
  [0,T]}$. However, since $Y$ is a Brownian motion after an equivalent
change of measure, optional and predictable $\sigma$-algebras coincide yielding the $\cF^Y$-predictability of $\left(\bbE[P_t|\cF^Y_t,\tau=t]\right)_{t \in
  [0,T]}$. Since $\cF^Y$
is a sub-filtration of $\cG$, the claim follows.

Now let's return to verify that $u$,  the $\cF^Y$-predictable (equivalently, $\cF^Y$-optional)
version of $(\bbE[P_t|\cF^Y_t, \tau=t])_{t\in [0,T]}$,  satisfies (\ref{e:v}). Note that since $\cF^Y$ is
contained in $\cF^{Y,\tau}$, $u$ is $\cF^{Y,\tau}$-optional as well. Furthermore,
\bean
\bbE\left[\int_0^{T}\nu_tP_tdD_t\right]&=&-\bbE\left[\nu_{\tau}
  P_{\tau}\chf_{[\tau \leq T]}\right]=-\bbE\left[\chf_{[\tau \leq T]}\nu_{\tau}
  \bbE\left[P_{\tau}\big|\cF^{Y,\tau}_{\tau}\right]\right] \\
&=&-\bbE\left[\chf_{[\tau \leq T]}\nu_{\tau}
  f(\tau,\om,\tau)\right]=\bbE\left[\int_0^{T}\nu_tf(t,\om,t)dD_t\right]\\
&=& \bbE\left[\int_0^{T}\nu_t u_t dD_t\right],\eean
where the third equality follows from the definition of optional
projections and the last equality holds since $u$ is also
$\cF^{Y,\tau}$-optional and a version of $(f(t,\om,t))_{t \in
  [0,t]}$.   This concludes the proof.
\end{proof}

An immediate corollary to this theorem is the following result.
\begin{corollary} \label{c:mainfilter}  Let $P$ defined by (\ref{e:P}) satisfy Assumption
  \ref{a:P} with $P_{\tau}=0$. Then
\[
\hat{P}_t=\hat{P}_0+\int_0^t \hat{V}_s\, ds +\int_0^t\left\{\hat{\theta}_s+\bbE[P_s
b(s,X_s)|\cG_s]-\hat{P}_s\bbE[b(s,X_s)|\cG_s]\right\}\,
  d\beta_s+\int_0^t\hat{P}_{s-}\,dL_s.
\]
\end{corollary}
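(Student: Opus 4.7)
The corollary is a direct specialization of Theorem \ref{t:mainfilter} under the extra hypothesis $P_\tau=0$. The drift coefficient $\hat V_s$ and the $d\beta_s$--coefficient
$\Phi_s=\hat\theta_s+\bbE[P_s b(s,X_s)|\cG_s]-\hat P_s\bbE[b(s,X_s)|\cG_s]$
are visibly unaffected by this additional assumption, so they carry over verbatim. Hence the only task is to simplify the coefficient of $dL_s$, namely
$\zeta_s=\hat P_{s-}-v_s$ with $v_s:=\bbE[P_s|\cF^Y_s,\tau=s]$,
and show that $v\equiv 0$ (in the $\cG$--predictable sense that matters for the stochastic integral against $L$) whenever $P_\tau=0$.

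The cleanest route is through the characterization (\ref{e:v}) from the proof of Theorem \ref{t:mainfilter}: the process $v$ is the $\cG$--predictable process determined by
\[
\bbE\!\left[\int_0^T\nu_sP_s\,dD_s\right]=\bbE\!\left[\int_0^T\nu_sv_s\,dD_s\right]
\]
for every bounded $\cG$--predictable $\nu$. Since $\int_0^T\nu_sP_s\,dD_s=-\nu_\tau P_\tau\chf_{[\tau\le T]}$, the hypothesis $P_\tau=0$ forces the left-hand side to vanish. The zero process is therefore a valid $\cG$--predictable solution of the defining identity, and by the uniqueness statement invoked in the proof of Theorem \ref{t:mainfilter} we may take $v\equiv0$, yielding $\zeta_s=\hat P_{s-}$ and the stated formula.

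To reinforce this conclusion one can also give the following direct interpretation, in the notation $v_s=f(s,\om,s)$ introduced in the proof of Theorem \ref{t:mainfilter}, where $(f(\tau(\om),\om,t))_{t\in[0,T]}$ is the $(\bbP,\cF^{Y,\tau})$--optional projection of $P$, and $f(u,\om,t)$ represents a version of $\bbE[P_t|\cF^Y_t,\tau=u]$. Applied to the $\cF^{Y,\tau}$--stopping time $\tau$ (which is $\cF^{Y,\tau}_0$--measurable by construction), the defining property of optional projection gives $f(\tau(\om),\om,\tau(\om))=\bbE[P_\tau|\cF^{Y,\tau}_\tau]=0$ a.s., consistent with the identity $P_s=P_\tau=0$ holding on the fiber $\{\tau=s\}$ in the disintegration.

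The main obstacle, and the reason the second paragraph's uniqueness invocation does the real work, is that the identity (\ref{e:v}) by itself only pins down $v_\tau$ (through the pairing $\nu_\tau v_\tau\chf_{[\tau\le T]}$), whereas $\int_0^t v_s\,dL_s=-v_\tau\chf_{[\tau\le t]}+\int_0^{t\wedge\tau}v_s\lambda_s\,ds$ also involves $v_s$ on $[0,\tau]$ through the compensator. What rescues the argument is that the canonical $\cG$--predictable version of $v$ produced in Theorem \ref{t:mainfilter} is precisely the diagonal of the disintegration discussed above, so the fiberwise vanishing $P_s=0$ on $\{\tau=s\}$ forces this version to be identically zero, not merely zero at $\tau$. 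Having established $v\equiv 0$, substituting into the formula of Theorem \ref{t:mainfilter} completes the proof.
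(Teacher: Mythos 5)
Your reduction to Theorem \ref{t:mainfilter} is exactly the paper's (unwritten) argument --- the corollary is stated there as immediate --- and your second paragraph is essentially the whole proof: with $P_\tau=0$ the left-hand side of (\ref{e:v}) vanishes for every bounded $\cG$-predictable $\nu$, so $v\equiv 0$ is an admissible choice in the representation $\zeta=\hat{P}_{-}-v$, giving $\zeta_s=\hat{P}_{s-}$. The drift and $d\beta$ coefficients are indeed untouched by the extra hypothesis.

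The one step I would not accept as written is the ``rescue'' in your final paragraph. The function $f$ of Lemma \ref{l:pAd} is determined only through the requirement that $(f(\tau(\om),\om,t))_{t\in[0,T]}$ be the $\cF^{Y,\tau}$-optional projection of $P$; this pins $f$ down only along $u=\tau(\om)$, so no version of $f$ is forced to vanish on the diagonal $\{u=t\}$ away from $\tau$, and the fiberwise identity ``$P_s=0$ on $\{\tau=s\}$'' does not by itself give $v\equiv 0$ for the specific version $v_t=\bbE[P_t|\cF^Y_t,\tau=t]$. The worry you correctly raise (that (\ref{e:v}) only controls $v_\tau$ while $\int_0^t v_s\,dL_s=-v_\tau\chf_{[\tau\le t]}+\int_0^{t\wedge\tau}v_s\lambda_s\,ds$ also sees $v$ on $[0,\tau]$) is disposed of not by the disintegration but by the compensator identity: your third paragraph already gives, rigorously, $v_\tau\chf_{[\tau\le T]}=\bbE[P_\tau\chf_{[\tau\le T]}|\cF^{Y,\tau}_\tau]=0$ a.s. (equivalently, take $\nu$ of sign type in (\ref{e:v})); and since $v$ is $\cG$-predictable and $\Lambda_t=\int_0^{t\wedge\tau}\lambda_s\,ds$ is the compensator of $1-D$, one has $\bbE\left[|v_\tau|\chf_{[\tau\le T]}\right]=\bbE\left[\int_0^{T\wedge\tau}|v_s|\lambda_s\,ds\right]$, so the Lebesgue part vanishes a.s. as well and $\int_0^{\cdot}v_s\,dL_s$ is indistinguishable from $0$. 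Put differently, the uniqueness in (\ref{e:v}) is uniqueness up to $d\Lambda\otimes d\bbP$-null sets, which is precisely the sense seen by the stochastic integral against $L$; with that reading your second paragraph suffices and the shaky fourth paragraph can be deleted.
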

Note that $P$ vanishes at $\tau$ if $P=f(X)$ where $f$ is a function
that vanishes at $0$. In view of this observation we will next
establish a version of {\em Kushner-Stratonovich equation} (see
Chap.~3 of \cite{bc} for the background) for the
conditional distribution of $X$. To this end let $\bbC$ denote the class
of continuous functions and $\bbC^2_{K,+}$ denote
the class of twice continuously differentiable functions with a
compact support in $(0,\infty)$ and define the operator
$\mathcal{A}:\bbC^2_{K,+} \mapsto \bbC$ by
\[
\mathcal{A}f(x)=a(x)f'(x)+\frac{1}{2}f''(x).
\]
For any $f \in \bbC^2_{K,+}$ let
\[
\pi_t f:=\bbE[f(X_{t\wedge \tau})|\cG_t].
\]
Observe that $\pi_t$ gives the $\cG$-conditional distribution of $X_t$
on the set $[\tau>t]$.
Then, as an immediate corollary to
Corollary \ref{c:mainfilter}, we have the following
\begin{corollary} Let $f \in \bbC^2_{K,+}$. Then,
\be \label{e:K-S}
\pi_t f=\pi_0 f +\int_0^t \pi_s\cA f\, ds +\int_0^t \left\{\pi_s
    fb-\pi_sf\pi_sb\right\}d\beta_s +\int_0^t\pi_{s-}fdL_s.
\ee
\end{corollary}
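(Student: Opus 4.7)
The plan is to recognize the Kushner--Stratonovich equation as a direct specialization of Corollary \ref{c:mainfilter} applied to the stopped signal $P_t := f(X_{t \wedge \tau})$. The compact support of $f$ in $(0,\infty)$ will deliver for free both the boundary condition $P_\tau = 0$ required by that corollary and the uniform bounds needed to verify Assumption \ref{a:P}. So the proof reduces to putting $P$ into the canonical form (\ref{e:P}) by It\^{o}'s formula and reading off the four processes $\hat V, \hat\theta, \bbE[Pb|\cG], \bbE[b|\cG]$ that feed into Corollary \ref{c:mainfilter}.

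First I would apply It\^{o}'s formula to $f(X_{t\wedge\tau})$, using the dynamics $dX_t = dW_t + a(X_t)\, dt$ from (\ref{e:sdeX}) and the fact that $\tau$ is the first hitting time of $0$. Since $f \in \bbC^2_{K,+}$, both $f$ and $\cA f$ are bounded and vanish at $0$, so the stopped process has the representation
\[
P_t = f(X_0) + \int_0^t \chf_{[\tau>s]}\cA f(X_s)\, ds + \int_0^t \chf_{[\tau>s]}f'(X_s)\, dW_s,
\]
which is exactly the form of (\ref{e:P}) with $V_s = \chf_{[\tau>s]}\cA f(X_s)$ and $m_t = \int_0^t \chf_{[\tau>s]}f'(X_s)\, dW_s$. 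Compact support of $f$ in $(0,\infty)$ gives $P_\tau = f(0) = 0$.

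Next I would verify Assumption \ref{a:P}. Boundedness of $f$ and $\cA f$ yields $\sup_{t \leq T}\bbE[P_t^2] < \infty$ and $\bbE\int_0^T V_s^2\, ds < \infty$ trivially. For the decomposition $m_t = \int_0^t \theta_s\, dB_s + n_t$, the independence of $B$ and $W$ (recalled at the start of Section \ref{s:model}) gives $[B,W] \equiv 0$, hence $[m,B] \equiv 0$, so I can take $\theta \equiv 0$ and $n = m$; the resulting $n$ is a continuous $(\bbP,\cH)$-martingale strongly orthogonal to $B$. In particular $\hat\theta \equiv 0$.

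Finally I would apply Corollary \ref{c:mainfilter}. With $\hat V_s = \bbE[\chf_{[\tau>s]}\cA f(X_s)|\cG_s]$ and the same trick used to handle $\bbE[P_s b(s,X_s)|\cG_s]$, I would absorb all indicators $\chf_{[\tau>s]}$ into the signal by using $f(0)=0$: on $[\tau \leq s]$ the random variable $f(X_{s\wedge\tau}) = 0$, so the optional projections match the $\pi_s$ notation exactly, i.e.\ $\hat V_s = \pi_s \cA f$ and $\bbE[P_s b(s,X_s)|\cG_s] = \pi_s(fb)$. Corollary \ref{c:mainfilter} then reads
\[
\pi_t f = \pi_0 f + \int_0^t \pi_s \cA f\, ds + \int_0^t \left\{\pi_s(fb) - \pi_s f\cdot \pi_s b\right\} d\beta_s + \int_0^t \pi_{s-}f\, dL_s,
\]
which is (\ref{e:K-S}). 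The only mildly delicate point -- the ``main obstacle'', if there is one -- is the notational reconciliation between the optional projections appearing in Corollary \ref{c:mainfilter} (which involve $X_s$, not $X_{s\wedge\tau}$) and the $\pi_s$ notation in the Kushner--Stratonovich equation; but this is immediate once one invokes $f(0)=0$ and $P_\tau = 0$.
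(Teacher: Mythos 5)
Your proposal is correct and follows essentially the same route as the paper: the paper obtains (\ref{e:K-S}) as an immediate consequence of Corollary \ref{c:mainfilter} applied to $P_t=f(X_{t\wedge\tau})$, using exactly the observation that $f$ (and hence $P$ at $\tau$) vanishes at $0$; your It\^{o} computation, verification of Assumption \ref{a:P} with $\theta\equiv 0$ (by independence of $B$ and $W$), and absorption of the indicators via $f(0)=\cA f(0)=0$ simply spell out the details the paper leaves implicit.
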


In particular, if $P$ is the $(\bbP,\cH)$-martingale defined by
$P_t=\bbP[\tau>T|\cH_t]=\chf_{[\tau>t]}H^a(T-t,X_t)$, where $H^a$ is the
function defined in (\ref{d:H}), then $P_{\tau}=0$, too.
We also have that $\hat{P}_{t-}=D_{t-}\hat{P}_t$. Indeed, since
\[
\bbE[ D_s H^a(T-s,X_s)|\cG_s]= D_s \frac{\bbE[ D_s H^a(T-s,X_s)|\cF^Y_s]}{Z_s}
\]
we have that
\[
\lim_{s \uar t} \bbE[ D_s H^a(T-s,X_s)|\cG_s]=\frac{D_{t-}}{Z_t} \lim_{s \uar t}\bbE[ D_s H^a(T-s,X_s)|\cF^Y_s].
\]
However, $(\bbE[ D_s H^a(T-s,X_s)|\cF^Y_s])_{s \in [0,T]}$ is a bounded
$(\bbP,\cF^Y)$-martingale, therefore it is continuous by Theorem 8.3.1 in \cite{Kal}  implying
\[
\lim_{s \uar t} \bbE[ D_s H^a(T-s,X_s)|\cG_s]=\frac{D_{t-}}{Z_t} \bbE[ D_t H^a(T-t,X_t)|\cF^Y_t].
\]
Hence, in view of the corollary above, one can write
\bea
\bbP[\tau>T|\cG_t]&=&\bbE[D_t H^a(T-t,X_t)|\cG_t] \nn  \\
&=&\bbP[\tau>T]\nn\\
&+&\int_0^t \chf_{[s\leq \tau]}\left\{\bbE[H^a(T-s,X_s)b(s,X_s)|\cG_s]-\bbE[H^a(T-s,X_s)|\cG_s]\bbE[b(s,X_s)|\cG_s]\right\}d\beta_s\nn \\
&+&\int_0^t \chf_{[s\leq \tau]}\bbE[H^a(T-s,X_s)|\cG_s]dL_s \label{e:condprob}
\eea
Note that the above formula also gives us the price of a defaultable
zero-coupon bond which pays $1$ unit of a currency to the holder at time-$T$ in case default does not occur, and pays nothing if default does
occur by time-$T$. As discussed in the introduction, there is usually
a rebate paid to the bond holder in case of default. Let's suppose that the rebate is
random and
amounts to $P_{\tau}$ for some stochastic process $P$. Time-$t$ value
of the rebate is given by $\bbE[P_{\tau}\chf_{[\tau \leq T]}|\cG_t]$.
The next proposition gives us a decomposition for the value of the
rebate before default happens.
\begin{proposition} \label{p:rebate} Let $P$ defined by (\ref{e:P}) be
  bounded and
  satisfy Assumption \ref{a:P}. Then, $(\bbE[P_{\tau}\chf_{[t<\tau
    \leq T]}|\cG_t] )_{t \in [0,T]}$ has the unique Doob-Meyer decomposition
\[
\bbE[P_{\tau}\chf_{[t<\tau \leq T]}|\cG_t]=\bbE[\alpha_T|\cG_t]-\alpha_t,
\]
where
\[
\alpha_t=\int_0^{t\wedge
    \tau}\bbE[P_s|\cF^Y_s,\tau=s]\lambda_s\,ds, \qquad t\in[0,T].
\]
\end{proposition}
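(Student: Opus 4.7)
The plan is to set $V_t := \bbE[P_\tau \chf_{[t<\tau\leq T]}|\cG_t]$ and to show that $V+\alpha$ is a $(\bbP,\cG)$-martingale. Since $V_T = 0$, this will immediately yield $V_t = \bbE[\alpha_T|\cG_t] - \alpha_t$ as required; this is the unique Doob-Meyer decomposition because $\alpha$ is continuous and of finite variation (bounded by $\|P\|_\infty \int_0^{T\wedge\tau}\lambda_s\,ds$), hence $\cG$-predictable.

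First I would split
\[
V_t = \bbE\left[P_\tau \chf_{[\tau\leq T]}\big|\cG_t\right] - P_\tau \chf_{[\tau\leq t]},
\]
and note that the first term on the right is a $(\bbP,\cG)$-martingale by construction. The problem therefore reduces to showing that $\alpha$ is the $\cG$-compensator of the bounded increasing $\cH$-adapted pure-jump process $Q_t := P_\tau \chf_{[\tau\leq t]}$, i.e.~that $Q - \alpha$ is a $(\bbP,\cG)$-martingale.

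To identify the compensator I would invoke the characterization of $v_t := \bbE[P_t|\cF^Y_t,\tau=t]$ established inside the proof of Theorem \ref{t:mainfilter}: $v$ is $\cG$-predictable (even $\cF^Y$-predictable) and is the unique such process satisfying
\[
\bbE\left[\int_0^T \nu_t P_t\,dD_t\right] = \bbE\left[\int_0^T \nu_t v_t\,dD_t\right]
\]
for every bounded $\cG$-predictable $\nu$. Since $dD_t$ concentrates mass $-1$ at $\tau$, the left-hand side equals $-\bbE\left[\nu_\tau P_\tau \chf_{[\tau\leq T]}\right]$, while the martingale property of $L_t = D_t - 1 + \int_0^{t\wedge\tau}\lambda_s\,ds$ from Corollary \ref{c:lambda}, applied to the bounded $\cG$-predictable integrand $\nu v$, yields $\bbE\left[\int_0^T \nu_t v_t\,dD_t\right] = -\bbE\left[\int_0^{T\wedge\tau}\nu_s v_s \lambda_s\,ds\right]$. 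Equating the two expressions gives
\[
\bbE\left[\nu_\tau P_\tau \chf_{[\tau\leq T]}\right] = \bbE\left[\int_0^{T\wedge\tau}\nu_s v_s \lambda_s\,ds\right]
\]
for every bounded $\cG$-predictable $\nu$, which is exactly the assertion that $Q - \alpha$ is a $(\bbP,\cG)$-martingale.

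Combining the two steps, $V_t = M_t - \alpha_t$ where $M_t := \bbE\left[P_\tau \chf_{[\tau\leq T]}|\cG_t\right] - (Q_t - \alpha_t)$ is a $(\bbP,\cG)$-martingale; evaluating at $t = T$ with $V_T = 0$ forces $M_T = \alpha_T$, so $M_t = \bbE[\alpha_T|\cG_t]$ and the claim follows. The main subtle point is the compensator identification in the third paragraph: one must recognise that the equation characterising $v$ in the proof of Theorem \ref{t:mainfilter} is precisely the definition of the $\cG$-dual predictable projection of $Q$ against the jump measure of $D$, which is what lets us handle the fact that the integrand $P$ in $\int P\,dD$ fails to be $\cG$-adapted.
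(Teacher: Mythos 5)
Your proof is correct in substance and takes a genuinely more direct route than the paper's. The paper first secures existence of the decomposition by writing $R_t=\bbE[P_\tau\chf_{[t<\tau\leq T]}|\cG_t]$ as a difference of two positive supermartingales, and then identifies $\alpha$ by comparing two semimartingale decompositions of $D\hat{P}$: one obtained from integration by parts together with the statement of Theorem \ref{t:mainfilter} (through the jump coefficient $\zeta_t=\hat{P}_{t-}-\bbE[P_t|\cF^Y_t,\tau=t]$), the other obtained by projecting the integration by parts of $DP$, which is what produces the term $\bbE[P_\tau\chf_{[\tau\leq t]}|\cG_t]$. You bypass the filtering equation and both integrations by parts, and instead identify $\alpha$ directly as the dual predictable projection of $Q_t=P_\tau\chf_{[\tau\leq t]}$ by combining the duality (\ref{e:v}) established inside the proof of Theorem \ref{t:mainfilter} with the martingale $L$; this is leaner and makes transparent why $\bbE[P_t|\cF^Y_t,\tau=t]$ appears. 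One imprecision should be fixed: $Q$ is not $\cG$-adapted ($P_\tau$ is only $\cH_\tau$-measurable), so the splitting $V_t=\bbE[P_\tau\chf_{[\tau\leq T]}|\cG_t]-Q_t$ and the phrase ``$Q-\alpha$ is a $(\bbP,\cG)$-martingale'' are not literally meaningful; you must keep the optional projection, $V_t=\bbE[P_\tau\chf_{[\tau\leq T]}|\cG_t]-\bbE[Q_t|\cG_t]$, and show that $\bbE[Q_t|\cG_t]-\alpha_t$ is a $(\bbP,\cG)$-martingale. That follows at once from the duality you prove: for $s\leq t$ and $A\in\cG_s$, applying it to the bounded $\cG$-predictable process $\nu_r=\chf_A\chf_{(s,t]}(r)$ gives $\bbE\left[\chf_A(Q_t-Q_s)\right]=\bbE\left[\chf_A(\alpha_t-\alpha_s)\right]$, which is exactly the required martingale property; the rest of your argument (using $V_T=0$ and the continuity, hence predictability, of $\alpha$ for uniqueness) then goes through. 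A last small point: in the step $\bbE[\int_0^T\nu_t v_t\,dD_t]=-\bbE[\int_0^{T\wedge\tau}\nu_s v_s\lambda_s\,ds]$ you should note that $\nu v$ is bounded (because $P$ is bounded, so $|v|\leq \|P\|_\infty$ up to the relevant null sets) and that $L$ has integrable variation on $[0,T]$ (since $\bbE[\Lambda_T]\leq 1$), so the stochastic integral against $L$ is a true martingale with zero expectation.
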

\begin{proof} Let
\[
R_t:=\bbE[P_{\tau}\chf_{[t<\tau \leq T]}|\cG_t]=\bbE[P_{\tau}\chf_{[\tau\leq
  T]}|\cG_t]-\bbE[P_{\tau}\chf_{[\tau\leq t]}|\cG_t].
\]
Then, $R_T=0$ and
\[
R_t=\left(\bbE[P_{\tau}^+\chf_{[\tau\leq
    T]}|\cG_t]-\bbE[P^+_{\tau}\chf_{[\tau\leq
    t]}|\cG_t]\right)-\left(\bbE[P_{\tau}^-\chf_{[\tau\leq
    T]}|\cG_t]-\bbE[P^-_{\tau}\chf_{[\tau\leq t]}|\cG_t]\right),
\]
where $x^+$ (resp.~$x^-$) denotes the positive (resp.~negative) part
of a real number $x$. The above implies $R$ is the difference
of two positive supermartingales,  thus, by Theorem 8 in
Chap.~III of \cite{Pro}, there exists a predictable process, $\alpha$, of finite variation with $\alpha_0=0$
such that $R-\alpha$ is a $(\bbP,\cG)$-martingale. Since $R_T=0$, we thus have the
unique decomposition of $R$ as follows:
\be \label{e:vdcomp}
R_t=\bbE[\alpha_T|\cG_t]-\alpha_t.
\ee
On the other hand, if we apply integration by parts formula to
$D\hat{P}$ we obtain
\be \label{e:ribp1}
d(D\hat{P})_t=D_{t-}\left\{\hat{V}_t-\bbE[P_t|\cF^Y_t,\tau=t]\lambda_t\right\}dt
+ dn^1_t,
\ee
where $n^1$ is $(\bbP,\cG)$-local martingale. Moreover, since
\[
d(DP)_t=D_tV_t dt + D_t dm_t-P_{t-}dD_t=D_{t-}V_t dt +D_{t-}dm_t -P_{\tau}\chf_{[\tau\leq t]},
\]
by taking the optional projection of the above, we see that
\be \label{e:ribp2}
D_t\hat{P}_t=\hat{P}_0 +\int_0^t
D_{s-}\hat{V}_s\,ds-\bbE[P_{\tau}\chf_{[\tau\leq t]}|\cG_t] +n^2_t,
\ee
where $n^2$ is a $(\bbP,\cG)$-local martingale. Therefore, comparing
(\ref{e:ribp1}) to (\ref{e:ribp2}), we obtain that
\[
\left(\bbE[P_{\tau}\chf_{[\tau\leq t]}|\cG_t] -\int_0^{t\wedge
    \tau}\bbE[P_s|\cF^Y_s,\tau=s]\lambda_s\,ds\right)_{t \in [0,T]}
\]
is a $(\bbP,\cG)$-local martingale. This implies, in view of
$(\bbE[P_{\tau}\chf_{[\tau\leq T]}|\cG_t])_{t\in[0,T]}$ being a
$(\bbP,\cG)$-martingale, that the process $\alpha$ in (\ref{e:vdcomp}) is given by
\[
\alpha_t=\int_0^{t\wedge
    \tau}\bbE[P_s|\cF^Y_s,\tau=s]\lambda_s\,ds, \qquad t\in[0,T].
\]
The claim now follows directly from (\ref{e:vdcomp}).
\end{proof}

We will next look at some specific examples where the finite variation
part in the decomposition of the rebate is of a simpler form.
\begin{example}
In  many situations the rebate is $\cF^Y$-adapted. In this case,
\[
\alpha_t =\int_0^{t\wedge
    \tau}P_s\lambda_s\,ds.
\]
If one is not interested in the Doob-Meyer decomposition but merely
the value of the rebate, it is well known
(see Proposition 5.1.1 in \cite{br}) that
\[
\bbE[P_{\tau}\chf_{[t<\tau \leq
  T]}|\cG_t]=\chf_{[\tau>t]}\frac{1}{Z_t}\bbE\left[-\int_t^TP_u\,dZ_u\bigg|\cF^Y_t\right]=\chf_{[\tau>t]}\frac{1}{Z_t}\bbE\left[\int_t^TP_u\lambda_u
  Z_u\,du\bigg|\cF^Y_t\right].
\]
Recall from Corollary \ref{c:Zrep} that
$Z_t=\exp\left(-\int_0^t\lambda_s\,ds\right) \xi_t^{-1}\kappa_t$ where
$\xi$ and $\kappa$ are as defined in the same corollary.  If we further assume
the condition of Corollary \ref{c:cn}, then there exists a probability
measure $\tilde{\bbQ}\sim \bbP$ such that $\frac{d\tilde{\bbQ}}{d\bbP}=
\xi_T^{-1}\kappa_T$ so that
\[
\bbE[P_{\tau}\chf_{[t<\tau \leq
  T]}|\cG_t]=\chf_{[\tau>t]}\bbE^{\tilde{\bbQ}}\left[\int_t^T
    P_u\lambda_u\exp\left(-\int_t^u
      \lambda_s\,ds\right)\,du\bigg|\cF^Y_t\right],
\]
which agrees with Proposition 4.3 in \cite{cn}.
 The advantage of the formulae above is that they do not contain the
random time $\tau$ inside the expectation on the right hand
side. However, they are valid only if $P$ is $\cF^Y$-adapted.
\end{example}
\begin{example} Similar to the previous example, if the value of
  rebate is given by $F(\tau, Y_{\tau})$ for some deterministic $F$, then
\[
\alpha_t=\int_0^{t \wedge \tau} F(s,Y_s)\lambda_s ds.
\]
\end{example}
The following equation of extrapolation is of interest in its own. Note that the additional assumption that
$p$ defined below is continuous is
automatically satisfied when $\cH$ is a Brownian filtration.
\begin{corollary} Let $P$ defined by (\ref{e:P}) satisfy Assumption
  \ref{a:P}. Fix a $t \in (0,T]$ and set
  $p_s:=\bbE[P_t|\cH_s]$. Assume further that $p$ is continuous. Then,
  for any $s \leq t$
\bean
\bbE[P_t|\cG_s]&=&\bbE[P_t]+\int_0^t\left\{\hat{f}_s
  +\bbE[P_tb(s,X_s)|\cG_s]-\bbE[P_t|\cG_s]\bbE[b(s,X_s)|\cG_s]\right\}d\beta_s\\
&&+\int_0^t\left\{\hat{p}_{s-}-\bbE[p_s|\cF^Y_s, \tau=s]\right\}dL_s,
\eean
where $f$ is the $\cH$-adapted process satisfying $d[p,B]_t=f_t dt$.
\end{corollary}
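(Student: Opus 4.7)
The plan is to apply Theorem \ref{t:mainfilter} to the $\cH$-martingale $p=(p_s)_{s\in[0,t]}$ itself, and then use the tower property to re-express the resulting ingredients in terms of $P_t$. Thus I regard $p$ as a semimartingale of the form (\ref{e:P}) with drift $V\equiv 0$, whose martingale part is $m_s=p_s-p_0$. Since $\cG_0$ is trivial (it is generated by $Y_0=0$, $D_0=1$, and the null sets), one has $\hat{p}_0=\bbE[p_0]=\bbE[P_t]$.

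The first step is to verify that $p$ satisfies Assumption \ref{a:P}. The condition $\sup_{s\leq t}\bbE[p_s^2]<\infty$ is immediate from conditional Jensen and the fact that $P_t\in L^2$ (a consequence of Assumption \ref{a:P}.1 for $P$), and the condition on $V$ is trivial. For the decomposition of $m$, I use the hypothesised continuity of $p$ together with the Galtchouk--Kunita--Watanabe decomposition relative to the continuous martingale $B$: this yields
\[
p_s=p_0+\int_0^s f_u\,dB_u+n_s,\qquad s\in[0,t],
\]
with $f$ an $\cH$-predictable process characterised by $d[p,B]_u=f_u\,du$ (hence $f$ is precisely the process named in the statement), and $n$ a continuous $\cH$-martingale strongly orthogonal to $B$. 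So Assumption \ref{a:P} holds for $p$ with $\theta=f$.

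Next I invoke Theorem \ref{t:mainfilter} for $p$ to obtain
\[
\hat{p}_s=\hat{p}_0+\int_0^s \Phi_u\,d\beta_u+\int_0^s \zeta_u\,dL_u,
\]
with $\Phi_u=\hat{f}_u+\bbE[p_ub(u,X_u)|\cG_u]-\hat{p}_u\bbE[b(u,X_u)|\cG_u]$ and $\zeta_u=\hat{p}_{u-}-\bbE[p_u|\cF^Y_u,\tau=u]$. The final step is then purely bookkeeping: for every $u\leq t$, the inclusion $\cG_u\subseteq\cH_u$ and the tower property give $\hat{p}_u=\bbE[P_t|\cG_u]$, and since $b(u,X_u)$ is $\cH_u$-measurable,
\[
\bbE[p_u b(u,X_u)|\cG_u]=\bbE[P_t b(u,X_u)|\cG_u].
\]
Substituting these identifications into the expressions for $\Phi$ and $\zeta$ above recovers the claimed equation of extrapolation.

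The only mildly delicate point is the appeal to the Galtchouk--Kunita--Watanabe decomposition with a continuous orthogonal martingale $n$; this is exactly why the continuity of $p$ is imposed as an hypothesis, and it is automatic when $\cH$ is a Brownian filtration, as noted just before the statement.
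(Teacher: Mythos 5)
Your proof is correct and follows essentially the same route as the paper: decompose the square integrable martingale $p$ orthogonally with respect to $B$ (the paper cites Sect.~3 of Chap.~IV in \cite{Pro}, you invoke Galtchouk--Kunita--Watanabe, which is the same decomposition), then apply Theorem \ref{t:mainfilter} and identify the terms via the tower property. Your added checks — that $p$ satisfies Assumption \ref{a:P} (with $\theta=f$, continuity of the orthogonal part coming from the assumed continuity of $p$) and that $\hat{p}_0=\bbE[P_t]$, $\bbE[p_u b(u,X_u)|\cG_u]=\bbE[P_t b(u,X_u)|\cG_u]$ — are details the paper leaves implicit.
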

\begin{proof} Note that $\bbE[P_t|\cG_s]=\hat{p}_s$. Since $p$ is a
  square integrable $(\bbP,\cH)$-martingale, it has an orthogonal
  decomposition of the following form:
\[
p_s=\bbE[P_t|\cH_0]+\int_0^sf_rdB_r + \bar{n}_s
\]
where $\bar{n}$ is a square integrable $(\bbP,\cH)$-martingale orthogonal to
$B $, see Sect.~3 of Chap.~IV in \cite{Pro} . The claim now follows from Theorem \ref{t:mainfilter}.
\end{proof}
\subsection{Extensions}
Acute reader would have noticed that we had not made use of the
Markov property of the vector $(X,Y)$ in the proofs. This makes the
extension of the results of this section to a non-Markovian setting an
easy task.

Indeed, let $\tau$ be an $\cH$-stopping time independent of
the $\cH$-Brownian motion $B$, and the observation process $Y$ is
given by
\be \label{e:extY}
Y_t=B_t +\int_0^t b_s\, ds
\ee
for a progressively measurable process $b$ such that
\[
\bbE\left[\int_0^Tb^2_s\, ds\right]< \infty.
\]
Suppose that
\[
Z_t:=\bbP[\tau>t|\cF^Y_t]=1+\int_0^t\left\{\bbE[\chf_{[\tau>s]}b_s|\cF^Y_s]-Z_s\bbE[b_s|\cF^Y_s]\right\}dB^Y_s
-\int_0^t \lambda_sZ_s\,ds
\]
for some $\cF^Y$-predictable process $\lambda$, where
\[
B^Y_t=Y_t-\int_0^t\bbE[b_s|\cF^Y_s]\,ds
\]
as usual. Then,  all the results of this section
will
continue to hold.

On the other hand, it does not seem easy to relax the assumption that
$\tau$ and $B$ are independent.  The difficulty is not in the
computation of the filtering formulae but the existence of an
absolutely continuous compensator for $Z$, see Remark \ref{r:independence}.

\appendix
\section{Appendix}
\subsection{Proofs of Theorems \ref{t:Hdensity}, \ref{t:AC} and Lemma \ref{l:RN}}
{\sc Proof of Theorem \ref{t:Hdensity}.}\hspace{3mm}Let $\bbQ_x$ denote the law of  the solution of (\ref{e:sdeX}) with the initial condition  $X_0=x$ and $\mathbb{W}_x$ be the law of the standard Brownian motion starting at $x$, both being defined on the canonical space $C(\bbR_+,\bbR)$ where $X_t(\om)=\om(t)$ and $\cF_t=\sigma(X_s; s\leq t)$.
\begin{enumerate}
\item One has,  for any $t\geq 0$,
\[
\bbQ_x|_{\cF_t}=\exp\left(A(X_t)-A(x)-\frac{1}{2}\int_0^t \left\{a^2(X_s)+ a'(X_s)\right\}\,ds\right)\cdot\mathbb{W}_x|_{\cF_t}
\]
The fact that $\exp\left(\int_0^t a(X_s)dX_s-\frac{1}{2}\int_0^t a^2(X_s)\,ds\right)$ is a $(\mathbb{W}_x,\cF)$-martingale follows from the fact that $X$ is the non-exploding solution to (\ref{e:sdeX}) and from, e.g., Exercise 2.10 in Chap.~IX of \cite{RY}. Let $f$ be a test function with a support in $[0,T]$ where $T$ is an arbitrary constant. Then,
\begin{multline}
\bbQ_x[f(\tau)] =\exp\left(-A(x)\right)\mathbb{W}_x \left[f(\tau)\exp\left(A(X_T)-\frac{1}{2}\int_0^T \left\{a^2(X_s)+ a'(X_s)\right\}\,ds\right)\right] \label{e:dens}\\
=\exp\left(-A(x)\right)\mathbb{W}_x \left[\chf_{[\tau\leq T]}f(\tau)\exp\left(A(X_T)-\frac{1}{2}\int_0^T \left\{a^2(X_s)+ a'(X_s)\right\}\,ds\right)\right]\\
=\exp\left(-A(x)\right)\mathbb{W}_x \left[f(\tau)\exp\left(-\frac{1}{2}\int_0^{\tau} \left\{a^2(X_s)+ a'(X_s)\right\}\,ds\right)\right]\\
=\exp\left(-A(x)\right)\mathbb{W}_x \left[f(\tau)\mathbb{W}_x\left[\exp\left(-\frac{1}{2}\int_0^{\tau} \left\{a^2(X_s)+ a'(X_s)\right\}\,ds\right)\bigg| \tau\right]\right],
\end{multline}
where the third equality is due to the Optional Sampling Theorem and
the fact that $f$ vanishes outside $[0,T]$. Since $\tau$ has a
density, namely $\ell(\cdot,x)$, under $\mathbb{W}_x$, we conclude
from the arbitrariness of $T$ that it has a density under $\bbQ_x$ as
well\footnote{Note that we are not claiming that this density
  integrates to $1$; i.e. $\tau$ could be infinite with positive
  $\bbQ_x$-probability. An example of this is when $a\equiv 1$,
  i.e. $X$ is a  Brownian motion with a positive drift.} More precisely,
\[
\bbQ_x[\tau \in dt]=\exp\left(-A(x)\right)\mathbb{E}^{(3)}_x \left[\exp\left(-\frac{1}{2}\int_0^t \left\{a^2(X_s)+ a'(X_s)\right\}\,ds\right)\bigg|X_t=0 \right]\ell(t,x) \,dt,
\]
where  $\mathbb{E}^{(3)}_x$ is expectation with respect to the law of
the 3-dimensional Bessel process starting at $x$. This is due to the
well-known relationship between the law of the Brownian motion
conditioned on its first hitting time of $0$ and that of 3-dimensional
Bessel bridge, which follows from William's time reversal result, see
Corollary 4.6 in Chap.~VII of \cite{RY}). Moreover,
$\mathbb{Q}_x[\tau>0]=1$ since $X$ is continuous and $x>0$. This
proves $H^a(0,x)=1$ and the desired absolute continuity of $H^a$. The strict positivity similarly follows from the fact that $\bbQ_x\sim
  \mathbb{W}_x$, when restricted to $\cF_t$, and that $\mathbb{W}_x[\tau>t]>0$ for every $t\geq
  0$.

\item In order to prove the second claim note that since $a(x) \geq -K_g(1+|x|)$, in view of standard comparison results for the solutions of SDEs (see \cite{RY}), the solution to (\ref{e:sdeX}) is always bigger than the solution of
\[
dX_t=dW_t- K_g(1+|X|_t)\,dt.
\]
Thus, the solution of (\ref{e:sdeX}) is larger than the solution to
\be \label{e:OU}
dX_t=dW_t- K_g(1+X_t)\,dt,
\ee
until the first hitting time of $0$ by the latter.
Let $\bbQ^{(-K_g)}_x$ be the law of the solution of (\ref{e:OU}) with the initial condition $X_0=x$ on the canonical space. Then, by the aforementioned comparison argument we have $\bbQ_x[\frac{1}{\tau} \geq t] \leq Q^{(-K_g)}_x[\frac{1}{\tau} \geq t]$, i.e.
\be \label{e:ctinv} \bbE_x[\frac{1}{\tau}] \leq \bbE^{(-K_g)}_x[\frac{1}{\tau}].
  \ee
Moreover, using the absolute continuity relationship between $Q^{(-K_g)}_x$ and $\mathbb{W}_x$ as above, we obtain
\begin{multline}
\bbQ^{(-K_g)}_x[\tau \in dt]\\
=\exp\left(\frac{K_g}{2}(t + 2 x+  x^2)\right)\mathbb{E}^{(3)}_x \left[\exp\left(-\frac{K_g^2}{2}\int_0^t (1+X_s)^2\,ds\right)\bigg|X_t=0 \right]\ell(t,x) \,dt  \\
\leq\exp\left(\frac{K_g}{2}(t + 2 x+  x^2)\right)\mathbb{E}^{(3)}_x \left[\exp\left(-\frac{K_g^2}{2}\int_0^t(1+ X_s^2)\,ds\right)\bigg|X_t=0 \right]\ell(t,x) \,dt  \\
\leq \frac{2\exp\left(\frac{K_g}{2}t(1-K_g)\right)(K_gt)^{3/2}}{\left[\exp(K_g t/2)-\exp(-K_g t/2)\right]^{3/2}}\exp\left(K_g x-\frac{K_g}{2}x^2\left\{\frac{K_g t coth(K_gt)-1}{K_g t}-1\right\}\right)\ell(t,x) \,dt  \\
=\frac{2\exp\left(-\frac{K_g^2}{2}t\right)(K_gt)^{3/2}}{\left[\exp(K_g t/6)-\exp(-5 K_g t/6)\right]^{3/2}}\exp\left(K_g x-\frac{K_g}{2}x^2\left\{\frac{K_g t coth(K_gt)-1}{K_g t}-1\right\}\right)\ell(t,x) \,dt\\
\leq 2\delta^{3/2} \exp\left(\frac{K_g}{2}(-K_g t + 2 x)\right) \ell(t,x) \,dt, \label{e:d_ubound}
\end{multline}
where the second inequality follows from Formula 2.5 in \cite{Y} and the last line is due to the fact that $\frac{y coth(y)-1}{y} \geq 1$ for $y\geq 0$. Thus,
\[
 \bbE^{(-K_g)}_x\left[\frac{1}{\tau}\right] \leq 2\delta^{3/2}  \exp\left(K_g x\right)\int_0^{\infty}\frac{e^{-\frac{K_g^2}{2} t}}{t} \ell(t,x) \, dt.
\]
Also note that
\[
\int_0^{\infty}\frac{e^{-\frac{K_g^2}{2} t}}{t} \ell(t,x) \, dt=- \frac{\partial }{\partial x} \int_0^{\infty}e^{-\frac{K_g^2}{2} t} \frac{1}{\sqrt{2 \pi t^3}}e^{-\frac{x^2}{2t}} \, dt.
\]
As
\[
\int_0^{\infty}e^{-\frac{K_g^2}{2} t} \frac{1}{\sqrt{2 \pi t^3}}e^{-\frac{x^2}{2t}}\, dt=\frac{1}{x}e^{-K_g x},
\]
Differentiating above with respect to $x$ in conjunction with (\ref{e:ctinv}) yields
\[
\int_0^{\infty} \frac{1}{s}\ell^a(s,x) \,ds \leq 2\delta^{3/2} \frac{1+ K_g x}{x^2}
\]
\item In order to prove the last assertion, first let
\[
\sigma(t,x):=\exp\left(-A(x)\right)\mathbb{E}^{(3)}_x \left[\exp\left(-\frac{1}{2}\int_0^t \left\{a^2(X_s)+ a'(X_s)\right\}\,ds\right)\bigg|X_t=0 \right]
\]
so that $\ell^{a}(t,x)=\sigma(t,x)\ell(t,x)$. Observe that $\sigma$ is
uniformly bounded, locally in $t$, if $A(\infty)> -\infty$. Since $t \ell(t,x)$ is uniformly bounded, there is nothing to prove when $A(\infty)> -\infty$.

When $A(\infty) = - \infty$, we must have $a(\infty)<\infty$. Then, there are two cases to consider: either $a(\infty)>-\infty$ and, consequently, $a$ is bounded on $[0, \infty]$, or $a(\infty)=-\infty$. We will prove the claim in the latter case. The case of bounded $a$ is easier and can be handled by the change of measure technique that we will employ below.

Suppose $a(\infty)=-\infty$ and let $\bbU^k_x$ be the law of the Ornstein-Uhlenbeck process, which is the unique solution to
\[
X_0=x +B_t -k \int_0^t X_s\,ds.
\]
Then, by an application of Girsanov theorem, one has
\begin{multline}
\bbQ_x[\tau \in dt]\\
={\bbU^{K_a}_x}\left[\exp\left(\int_0^t\left\{a(X_s)+K_a X_s\right\}dX_s +\frac{1}{2}\int_0^t \left\{K_a^2 X_s^2 -a^2(X_s)\right\}ds\right)\bigg|X_t=0\right]\bbU^{K_a}_x(\tau\in dt)\\
={\bbU^{K_a}_x}\left[\exp\left(-F(x) +\frac{1}{2}\int_0^t \left\{K_a^2 X_s^2 -a^2(X_s)-a'(X_s)-K_a\right\}ds\right)\bigg|X_t=0\right]\bbU^{K_a}_x(\tau\in dt)\\
\leq K\exp\left(-F(x) \right)\bbU^{K_a}_x(\tau\in dt),
\end{multline}
for some constant $K$, depending on $t$, in view of Assumption \ref{a:a}, where
\[
F(x):=\int_0^x \{a(y)+K_ay\}dy.
\]
Observe that under Assumption \ref{a:a}, for large values of $x$, $\exp(-F(x))\leq \exp( c
x^p) $ for some  constant $c$, and   $p<2$. On the other hand,
\[
t\bbU^{K_a}_x[\tau\in dt]=\frac{x}{\sqrt{2 \pi}}\left(\frac{K_at}{\sinh(K_at)}\right)^{\frac{3}{2}}\exp\left(\frac{K_a}{2}\left(t-x^2(\coth(K_at)-1)\right)\right),
\]
see, e.g. \cite{GY}. Since $\frac{x}{\sinh{x}}$ is bounded and $\coth(K_at) >1$ when $t \leq N$, for any $N$, claim follows.\end{enumerate} \qed

\noindent {\sc Proof of Theorem \ref{t:AC}.}\hspace{3mm}The idea of the
  proof is to apply the nonlinear filtering formulas to find an
  expression for $Z$ which will lead to the statement of the theorem
  after Fubini type arguments as explained below. This will be done in three steps.

\noindent {\sl STEP 1.} We will first prove that \bea
Z_t&=&\bbE[H^a(t, X_0)] \label{e:zfr} \\
&&+\int_0^t
\bbE\left[\chf_{[\tau>s]}H^a(t-s,X_s)\left(b(s,X_s)-\bbE[b(s,X_s)|\cF^Y_s]\right)\big|\cF^Y_s\right]\,
dB^Y_s, \nn
\eea
 and $Z$ is strictly positive. To this
end,  let $P_s:=\chf_{[\tau>s]}H^a(t-s, X_s)$ for $s \leq
  t$. It follows from (\ref{d:H}) and the Markov property of $X$ that, for any $t$,
  $(P_s)_{s \in [0,t]}$ is a bounded, continuous and nonnegative $(\bbP,\cH)$-martingale with
  $P_{\tau}=0$ on the set $[\tau \leq t]$ and
  $P_t=\chf_{[\tau>t]}$ . Since
\be \label{e:spz1}
\int_0^t \bbE^2[b(s,X_s)]\,ds  \leq K_b^2(t) \int_0^t \bbE X_s^2\, ds <
\infty
\ee
in view of Remark \ref{r:2ndmomentX}, it follows from Theorem 8.1 in
  \cite{ls} that for $s \leq t$
\[
\bar{P}_s=\bbE[H^a(t, X_0)] +\int_0^t\left\{
\bbE[P_r b(r,X_r)|\cF^Y_r]-\bar{P}_r\bbE[b(r,X_r)|\cF^Y_r]\right\}\, dB^Y_r,
\]
where $\bar{P}$ is the $\cF^Y$-optional projection of $P$, and the { innovation process} defined by \[
dB^Y_t=Y_t-\bbE[b(t,X_t)|\cF^Y_t],
\]
is an $\cF^Y$-Brownian motion.  Noticing that $Z_t=\bar{P}_t$ yields
the claimed representation.

In order to show the strict positivity we will make use of the process
$M$ defined in  (\ref{e:M}). Observe from the discussion
following (\ref{e:M}) that
 $M^{-1}$ is a strictly positive $(\bbP, \cH)$-martingale, and
 $\bbQ_t\sim \bbP_t$ is a
probability measure on $\cH_t$ defined by
\[
\frac{d\bbQ_t}{d\bbP_t}=M_t^{-1},
\]
under which  $(Y_s)_{s \in [0,t]}$ is a standard Brownian motion
independent of $(X_s)_{s \in [0,t]}$. Also observe that  the laws of $(X_s)_{s \in
  [0,t]}$ under $\bbP_t$ and $\bbQ_t$ are the same since the measure
change only affects $Y$. Moreover, in view of (\ref{e:ZQ}), one has
\[
Z_t \bbE^{\bbQ_t}[M_t|\cF^Y_t]=\bbE^{\bbQ_t}[\chf_{[\tau>t]}M_t|\cF^Y_t].
\]
Since $M$ is strictly positive, so is $\bbE^{\bbQ_t}[M_t|\cF^Y_t]$;
thus, strict positivity of $Z$ is equivalent to that of
$\bbE^{\bbQ_t}[\chf_{[\tau>t]}M_t|\cF^Y_t]$. However, for any $A \in
\cF^Y_t$ with $\bbQ_t[A]>0$, $\bbQ_t[A, \tau>t]=\bbQ_t[A]\bbQ_t[\tau>t] >0$ since $\chf_{[\tau >t]}$ is independent of
$\cF^Y_t$ under $\bbQ_t$, and $\bbQ_t[\tau>t]=\bbP[\tau>t]>0$ in view
of Part 1 of Theorem \ref{t:Hdensity}. Thus,
$\bbE^{\bbQ_t}[\chf_{[\tau>t]}M_t|\cF^Y_t]>0,\, \bbQ_t$-a.s. since
$M_t$ is strictly positive $\bbQ_t$-a.s.. Claim now follows from
the equivalence of $\bbQ_t$ and $\bbP_t$.

{\sl STEP 2.} Next, we will show that
\bea
&&\int_0^t
\bbE\left[\chf_{[\tau>s]}H^a(t-s,X_s)\left(b(s,X_s)-\bbE[b(s,X_s)|\cF^Y_s]\right)\big|\cF^Y_s\right]\,
dB^Y_s \nn \\
&& =\int_0^t\bbE\left[\chf_{[\tau>s]}\left(b(s,X_s)-\bbE[b(s,X_s)|\cF^Y_s]\right)|\cF^Y_s\right]dB^Y_s \label{e:fubos}\\
&&-\int_0^t\left(\int_0^s \bbE[\chf_{[\tau>r]}\ell^a(s-r, X_r)\left(b(r,X_r)-\bbE[b(r,X_r)|\cF^Y_r]\right)|\cF^Y_r]dB^Y_r\right)ds. \nn
\eea
 Recall that
\[
H^a(t-s,X_s)=1-\int_s^{t}\ell^a(u-s,X_s)\,
du;
\]
thus,
\bean
&&\int_0^t
\bbE\left[\chf_{[\tau>s]}H^a(t-s,X_s)\left(b(s,X_s)-\bbE[b(s,X_s)|\cF^Y_s]\right)\big|\cF^Y_s\right]\,
dB^Y_s\\
&=&
\int_0^t\bbE\left[\chf_{[\tau>s]}\left(b(s,X_s)-\bbE[b(s,X_s)|\cF^Y_s]\right)|\cF^Y_s\right]dB^Y_s
\\
&&-\int_0^t\bbE\left[\chf_{[\tau>s]}\int_s^t
  \ell^a(u-s,X_s)\,du\left(b(s,X_s)-\bbE[b(s,X_s)|\cF^Y_s]\right)\bigg|\cF^Y_s\right]dB^Y_s\\
&=&\int_0^t\left\{\bbE\left[\chf_{[\tau>s]}b(s,X_s)|\cF^Y_s\right]-Z_s\bbE[b(s,X_s)|\cF^Y_s]\right\}dB^Y_s
\\
&&-\int_0^t \int_s^t\bbE\left[\chf_{[\tau>s]}
  \ell^a(u-s,X_s)\left(b(s,X_s)-\bbE[b(s,X_s)|\cF^Y_s]\right)\bigg|\cF^Y_s\right]\,du\,
dB^Y_s
\eean
where the interchange of expectation and integration is justified by
Fubini's theorem since $\ell^a$ is positive and integrable, $b$ is
Lipschitz, and $\bbE|X_s|<\infty$ for any $s \geq 0$.

Moreover, if we can interchange the order of stochastic and ordinary
integrals in the second integral above, we can further write
\bean
&&\int_0^t
\bbE\left[\chf_{[\tau>s]}H^a(t-s,X_s)\left(b(s,X_s)-\bbE[b(s,X_s)|\cF^Y_s]\right)\big|\cF^Y_s\right]\,
dB^Y_s\\
&=&\int_0^t\eta_s\,dB^Y_s -\int_0^t\left(\int_0^u \bbE\left[\chf_{[\tau>s]}\ell^a(u-s, X_s)\left(b(s,X_s)-\bbE[b(s,X_s)|\cF^Y_s]\right)\big|\cF^Y_s\right]dB^Y_s\right)du.
\eean
This interchange of ordinary and stochastic integrals can be justified by Theorem 65 in Chap.~IV of
\cite{Pro} if
\be \label{e:fj}
\bbE\left[\int_0^t\int_0^s\chf_{[\tau>r]}(\ell^a(s-r,
    X_r))^2 X_r^2 \, dr\, ds\right] < \infty
\ee
since $b$ is locally Lipschitz and $b(t,0)=0$ by Assumption \ref{a:b}. Since all the terms are positive we have
\bean
\int_0^t\int_0^s\chf_{[\tau>r]}(\ell^a(s-r,
    X_r))^2 X_r^2 \, dr\, ds&=&\int_0^t\int_r^t\chf_{[\tau>r]}(\ell^a(s-r,
    X_r))^2 X_r^2  \, ds\, dr \\
&\leq& K \int_0^t\chf_{[\tau>r]} X_r^2 \int_r^t\frac{1}{s-r}\ell^a(s-r,
    X_r) \, ds\, dr\\
 &\leq& K \int_0^t\chf_{[\tau>r]} X_r^2  \int_r^{\infty}\frac{1}{s-r}\ell^a(s-r,
    X_r) \, ds\, dr\\
&\leq& K \int_0^t\chf_{[\tau>r]}  (1+K_g X_r) dr\\
&\leq& K \int_0^t (1+K_g |X_r|)dr.
\eean
where the second line is due to $\ell(u, x) < K \frac{1}{u}$ by
Theorem \ref{t:Hdensity} for some constant $K$, possibly depending on
$t$,  and the fourth line is a consequence of (\ref{e:tauinv}).
(\ref{e:fj}) now follows from Remark \ref{r:2ndmomentX}.

\noindent {\sl STEP 3.} Combining (\ref{e:zfr}) and (\ref{e:fubos})  yields
\bean
Z_t&=&\bbE[H^a(t,X_0)]+\int_0^t \eta_s\,dB^Y_s \\
&-&\int_0^t\left(\int_0^s
  \bbE[\chf_{[\tau>r]}\ell^a(s-r,
  X_r)\left(b(r,X_r)-\bbE[b(r,X_r)|\cF^Y_r]\right)|\cF^Y_r]dB^Y_r\right)ds.
\eean
The proof is now complete since
\[
\bbE[H^a(t,X_0)]=1-\int_0^t\int_0^{\infty}
\ell^a(u,x)\mu(dx)\,du.
\]
\qed

\noindent {\sc Proof of Lemma \ref{l:RN}.}\hspace{3mm}   Observe that for any bounded $\cF^Y_t$-measurable
  random variable $F$, we can write,  in view of the absolute continuity relationship between $\bbP$
  and $\bbQ$,
\[
\bbE[F]=\bbEQ\left[M_t F\right]=\bbEQ\left[\bbEQ[M_t|\cF^Y_t]F\right].
\]
Since $(\bbEQ[M_t|\cF^Y_t])_{t \in [0,T]}$ is a strictly positive
$(\bbQ,\cF^Y)$-martingale, the above implies that
\[
d\bbP|_{\cF^Y_t}=\bbEQ[M_t|\cF^Y_t]d\bbQ|_{\cF^Y_t},
\]
and $\bbP|_{\cF^Y_t}\sim\bbQ|_{\cF^Y_t}$. Moreover, since
$Y$ is $\bbQ$-Brownian motion, we have from the predictable
representation property of Brownian filtrations that
\[
\bbEQ[M_t|\cF^Y_t]=1+\int_0^t \phi_s \bbEQ[M_s|\cF^Y_s]\,dY_s,
\]
for some $\cF^Y$-predictable process $\phi$ since
$(\bbEQ[M_s|\cF^Y_s])_{s \in [0,T]}$ is strictly positive and
continuous, hence predictable.
Next note that
$\cF^Y$-canonical decomposition of $Y$ under $\bbP$ is given by
\[
Y_t=B^Y_t+\int_0^t\bbE[b(s,X_s)|\cF^Y_s]\,ds
\]
by an application of Theorem 8.1 in \cite{ls}. Girsanov Theorem now
tells us that $\phi_s=\bbE[b(s,X_s)|\cF^Y_s]$, i.e.
\[
\bbEQ[M_t|\cF^Y_t]
=1+\int_0^t
\bbEQ[M_s|\cF^Y_s]\bbE[b(s,X_s)|\cF^Y_s]\,dY_s.
\]
Moreover, since $\bbE[b(s,X_s)|\cF^Y_s]=\frac{\bbEQ[M_s
  b(s,X_s)|\cF^Y_s]}{\bbEQ[M_s|\cF^Y_s]}$, we also have
\[
\bbEQ[M_t|\cF^Y_t]
=1+\int_0^t
\bbEQ[M_sb(s,X_s)|\cF^Y_s]\,dY_s.
\]
In view of (\ref{e:minv}), an
application of Ito's formula yields that
\[
\bbE[M_t^{-1}|\cF^Y_t]=1-\int_0^t
\bbE[M_s^{-1}|\cF^Y_s]\bbE[b(s,X_s)|\cF^Y_s]\,dB^Y_s.
\]
\qed

\subsection{A measure theoretic lemma}
In this section we will state and prove a lemma which will be useful in
obtaining the main filtering result of this paper contained in Theorem
\ref{t:mainfilter}. The proof is based
on elementary measure theoretic methods. We refer the reader to
Section 5 in Chap. IV of \cite{RY} for equivalent definitions of
optional projections, which will be used in the proof below. In Lemma below $\bbQ$ is the probability measure on $\cH_{\tau \vee T}$ which is equivalent\footnote{As before $\bbQ$ is defined via the Radon-Nikodym derivative $M_{\tau \vee T}$. Observe that $M$ is still a martingale until the finite stopping time $\tau \vee T$ in view of the same no-explosion argument used in the beginning of the proof of Theorem \ref{t:Hdensity}. Also recall from the discussion following the definition of $M$ in (\ref{e:M}) the impossibility of defining an equivalent $\bbQ$ on $\cH_{\infty}$. } to the restriction of $\bbP$ to $\cH_{\tau \vee T}$ and under which $Y^{\tau \vee T}$ is a stopped Brownian motion independent of $X^{\tau \vee T}$.  In what follows $\cB$ denotes the class of Borel sets and we suppress the dependency on $T$ to ease notation when no confusion arises.

 \begin{lemma} \label{l:pAd}
\begin{enumerate}
\item  Let  $T>0$ be a fixed real number and suppose that $F$ is a $\cB([0,T])\otimes (\sigma(\tau)\vee \cH_T)$-measurable and $\bbQ$-integrable stochastic process.  Denote  the $(\cF^Y_t)_{t \in [0,T]}$-optional
  $\sigma$-algebra with $\cO^Y$  and let $\cF^{Y,\tau}$ be the smallest filtration satisfying
  the usual conditions and including $(\cF^Y_t )_{t \in [0,T]}$ such that $\sigma(\tau)
  \subset \cF^{Y,\tau}_0$. Then, there exists a function
  $f:[0,\infty)\times(\Om\times [0,T)) \mapsto \bbR$ such that $f
 $ is $\cB([0,\infty)) \otimes \cO^Y$-measurable and the $(\bbQ, \cF^{Y,\tau})$-optional
 projection of $F$ is given by $(f(\tau(\om),\om, t))_{t \in [0,T]}$.
\item For every $u \geq 0$ and $t\geq 0$, $\bbE^{\bbQ}[F_t|\cF^Y_t,\tau=u] :=f(u, \cdot, t)$ is $\cF^Y_t$-measurable,
  where $f$ is as above.
\end{enumerate}
\end{lemma}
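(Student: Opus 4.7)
The strategy rests on the fact that, under $\bbQ$, the observation filtration $\cF^Y_{\tau\vee T}$ is independent of the signal $\sigma$-algebra $\cS:=\sigma(X_s: s\leq \tau\vee T)\vee \cH_0$, and in particular of $\sigma(\tau)\subset\cS$. This is because the Girsanov density $M^{-1}_{\tau\vee T}$ depends only on $(X,Y)$ and affects only the drift of $Y$, so the $\bbP$-independence of $B$ from $(W,\cH_0)$ carries over to $\bbQ$ with $Y$ playing the role of $B$. The plan is to construct $f$ explicitly on a generating class of $F$'s, read off its joint measurability, and extend by a monotone-class argument.

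By a $\pi$-$\lambda$ argument combined with truncation, it suffices to treat $F$ of product form
\[
F_t(\om)=h(t)\,\chf_{A_1}(\om)\,\chf_{A_2}(\om),
\]
with $h$ bounded Borel on $[0,T]$, $A_1\in\cS$ and $A_2\in\cF^Y_T$. Let $q:[0,\infty)\to[0,1]$ be a Borel version of the regular conditional probability $u\mapsto\bbQ(A_1\mid\tau=u)$, which exists because the underlying measurable space can be taken Polish, and let $(m_t)_{t\in[0,T]}$ be the c\`adl\`ag version of the $(\bbQ,\cF^Y)$-martingale $\bbE^{\bbQ}[\chf_{A_2}\mid\cF^Y_t]$; the latter is then automatically $\cO^Y$-measurable. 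Define
\[
f(u,\om,t):=h(t)\,q(u)\,m_t(\om),
\]
which is plainly $\cB([0,\infty))\otimes\cO^Y$-measurable. Using the independence $\cS\perp_\bbQ\cF^Y_{\tau\vee T}$ and the tower property, for every $\cF^{Y,\tau}$-stopping time $S$,
\[
\bbE^{\bbQ}\!\left[F_S\,\chf_{[S<\infty]}\,\big|\,\cF^{Y,\tau}_S\right]=h(S)\,q(\tau)\,m_S\,\chf_{[S<\infty]}=f(\tau(\om),\om,S)\,\chf_{[S<\infty]},
\]
which identifies $(f(\tau(\om),\om,t))_{t\in[0,T]}$ as the $(\bbQ,\cF^{Y,\tau})$-optional projection of $F$.

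The class $\mathscr{D}$ of processes admitting such a representation contains the above generators and is stable under bounded monotone limits: if $F^n\uparrow F$ with representatives $f^n$, then $f:=\limsup_n f^n$ is again $\cB\otimes\cO^Y$-measurable, and monotone convergence of conditional expectations guarantees that $(f(\tau(\om),\om,t))_{t\in[0,T]}$ is a version of the optional projection of $F$. The monotone class theorem then covers all bounded $F$, and a standard truncation $F^{(n)}:=(-n)\vee F\wedge n$ extends the conclusion to every $\bbQ$-integrable $F$. Part 2 is now automatic, because any $\cO^Y$-measurable process sectioned at a fixed time $t$ is $\cF^Y_t$-measurable, so for fixed $u,t$ the map $\om\mapsto f(u,\om,t)$ is $\cF^Y_t$-measurable. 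The main obstacle is the rigorous verification of the independence $\cS\perp_\bbQ\cF^Y_{\tau\vee T}$, which must be extracted from the structure of the Girsanov density $M^{-1}$ in the possibly enlarged filtration $\cH$; once this splitting is in hand, the remainder of the argument is routine.
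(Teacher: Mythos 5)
Your proposal is correct and follows essentially the same route as the paper: a monotone class argument over product-form generators, exploiting the $\bbQ$-independence of $Y$ from the signal (hence from $\tau$) so that the $\cF^Y_T$-factor is represented by its \cadlag $(\bbQ,\cF^Y)$-martingale version (which remains a martingale in $\cF^{Y,\tau}$), the remaining factor by a Borel function of $\tau$, and part 2 by taking sections of the jointly measurable $f$. The only cosmetic differences are that you absorb the $\tau$-dependent factor into the signal indicator via $q(u)=\bbQ(A_1\mid\tau=u)$ where the paper keeps $\bbE^{\bbQ}[F^1\mid\tau]$ and $F^3(\tau)$ separate, and that you omit the paper's one-line tower-property reduction from general $\cH_T$-measurable integrands to $\sigma(\tau)\vee\cF^X_T\vee\cF^Y_T$-measurable ones (your generators only span $\cS\vee\cF^Y_T$), while the independence you flag as the ``main obstacle'' is exactly the property of $\bbQ$ the paper builds into its definition.
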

\begin{proof}
\begin{enumerate}
\item Note that we can assume without any loss of
  generality that $F$ is $\cB([0,T)) \otimes
  (\sigma(\tau)\vee \cF^X_{T}\vee\cF^Y_T)$-measurable in view of the tower
  property of conditional expectations.
 Let $\cI$ and $\cC$ denote the class of $\bbQ$-integrable and $\cB([0,T)) \otimes
  (\sigma(\tau)\vee \cF^X_{T}\vee\cF^Y_{T})$-measurable stochastic processes, and
  the class of $\cB([0,\infty)) \otimes
  \cO^Y$-measurable real-valued functions defined on
  $[0,\infty)\times(\Om\times [0,T))$, respectively. For $F \in \cI$, let's denote
 its $(\bbQ,\cF^{Y,\tau})$-optional projection with ${}^o F$
  and define
\[
\cR:=\left\{F \in \cI: {}^oF=(f(\tau(\om),\om,t))_{t \in [0,T]}, \, f \in  \cC\right\}.
\]
$\cR$ is clearly a vector space containing constant functions. Moreover,
if $(F^n)_{n \geq 1} \subset \cR$ is a sequence of uniformly bounded and increasing
processes such that  $\lim_{n \rar \infty}
F^n=F$, then $F \in \cR$, as well. Indeed, let $f^n$ denote
the measurable function corresponding to ${}^oF^n$ for each $n$. Then,
$f:=\liminf_{n \rar \infty} f^n$ belongs to $\cC$ since $f^n \in \cC$ for each $n$. Moreover, for any
$\cF^{Y,\tau}$-stopping time $S$, which is necessarily less than or equal to $T$, 
\bean
f(\tau(\om), \om, S(\om))&=&\liminf_{n
  \rar \infty} f^n(\tau(\om), \om, S(\om))\\
&=&\liminf_{n
  \rar \infty}\bbE^{\bbQ}\left[F^n_S\big|\cF^{Y,\tau}_S\right]\\
&=&\bbE^{\bbQ}\left[F_S\big|\cF^{Y,\tau}_S\right],
\eean
where the second equality follows from the definition of optional
projections and the last equality follows from the Dominated
Convergence Theorem. This shows  that
${}^oF=(f(\tau(\om),\om,t))_{t \in [0,T]}$ and, thus, $F \in
\cR$. Consequently, $\cR$ is a monotone vector space.

In order to prove the claim using a monotone class argument, it
suffices to prove the statement for  a multiplicative class generating
$\cI$. Such a class is provided by the processes
\[
F_t(\om)=\chf_{[0,s)}(t) F^1(\om) F^2(\om) F^3(\tau(\om)),  \qquad 0 \leq s \leq
T, \qquad F^1 \in L^{\infty}(\cF^X_{T}),\;  F^2
\in L^{\infty}(\cF^Y_{T}) 
\]
and $F^3$ is a bounded Borel measurable function on $[0,\infty)$. Let $(f^2(\om,t))_{t \in [0,T]}$ be the \cadlag version of the $(\bbQ,
\cF^Y)$-martingale $(\bbE^{\bbQ}[F^2|\cF^Y_t])_{t \in [0,T]}$ and note that $f^2$ is an
$\cO^Y$-measurable function. Moreover, $(f^2(\om,t))_{t \in [0,T]}$  is
also a $(\bbQ,
\cF^{Y,\tau})$-martingale since $(Y_t)_{t \in [0,T]}$ and $\tau$ are independent under
$\bbQ$. Therefore, the $(\bbQ,
\cF^{Y,\tau})$-optional projection of $F^2$ is given by $f^2$ in view
of the Optional Stopping Theorem.

Also observe that there exists a Borel measurable function, $f^1$,
such that $f^1(\tau)=\bbE^{\bbQ}[F^1|\tau]$. We will now see that
${}^o F =(\chf_{[0,s)}(t) f^1(\tau(\om))f^2(\om,t)F^3(\tau(\om)))_{t \in [0,T]}$. Clearly,
$(\chf_{[0,s)}(t) f^1(\tau(\om))f^2(\om,t)F^3(\tau(\om)))_{t \in [0,T]}$  is an
$\cF^{Y,\tau}$-optional process since $f^2$ is $\cF^Y$-optional. In order to show it is the desired
optional projection, it suffices to show that for any
$\cF^{Y,\tau}$-stopping time $S$
\[
\bbE^{\bbQ}\left[F_S\right]=\bbE^{\bbQ}\left[\chf_{[0,s)}(S) f^1(\tau)f^2(\om,S)F^3(\tau(\om))\right].
\]
Indeed,
\bean
\bbE^{\bbQ}\left[\chf_{[0,s)}(S)F^1
  F^2F^3(\tau)\right]&=&\bbE^{\bbQ}\left[\chf_{[0,s)}(S)\bbE^{\bbQ}\left[F^1
    \big| \cF^Y_{T},\tau\right] F^2 F^3(\tau)\right]\\
&=&\bbE^{\bbQ}\left[\chf_{[0,s)}(S)\bbE^{\bbQ}\left[F^1
    \big| \tau\right] F^2F^3(\tau)\right]\\
&=&\bbE^{\bbQ}\left[\chf_{[0,s)}(S)f^1(\tau)F^2F^3(\tau)\right]\\
&=&\bbE^{\bbQ}\left[\chf_{[0,s)}(S)f^1(\tau)f^2(\om,S)F^3(\tau)\right],
\eean
where the second equality is due to the independence of $X^{\tau \vee T}$ and $Y^{T}$
under $\bbQ$ and the last equality holds since $f^2$ is the $(\bbQ,
\cF^{Y,\tau})$-optional projection of $F^2$.

Finally, since we have already observed that $f^2$ is an
$\cO^Y$-measurable function, it now easily follows that the function $f(u, \om, t):=
\chf_{[0,s)}(t) f^1(u)f^2(\om,t)F^3(u)$ belongs to $\cC$. The Monotone Class
Theorem now
yields that any bounded member of $\cI$ is contained in
$\cR$. The general case follows from applying the Dominated Convergence
Theorem to $F$ and the sequence $((F \wedge n)\vee -n)$.
\item Note that the $u$-section, $f( u, \cdot, \cdot)$,  of $f$  is
  $\cO^Y$-measurable for each $u \geq 0$ since $f$ is measurable
  with respect to the product $\sigma$-algebra. In particular,  $(f( u,
  \om,t))_{t \in [0,T]}$ is
  $\cF^Y$-adapted for each $u \geq 0$.
\end{enumerate}
\end{proof}
\end{document}